\documentclass[10pt]{amsart}
\usepackage{amsfonts,amscd,amsthm,amsgen,amsmath,amssymb}
\usepackage[all]{xy}
\usepackage[vcentermath]{youngtab}
\usepackage{graphicx}
\usepackage{tikz-cd}
\usepackage{booktabs}

\newtheorem{theorem}{Theorem}[section]
\newtheorem{lemma}[theorem]{Lemma}
\newtheorem{corollary}[theorem]{Corollary}
\newtheorem{proposition}[theorem]{Proposition}

\theoremstyle{definition}
\newtheorem{definition}[theorem]{Definition}
\newtheorem{example}[theorem]{Example}

\theoremstyle{remark}
\newtheorem{remark}[theorem]{Remark}

\numberwithin{equation}{section}

\begin{document}
	
	\title[page label]{Morita Invariance of Saturated and Split Subgroupoids for Orbifold Embeddings}
	
	\author{Zhenxi Huang}
	\author{Keyin Chen}
	
	
	\email{huangzhenxi@jnu.edu.cn}

\address{School of Mathematics, Jinan University, Guangzhou, China}

\thanks{Zhenxi Huang is funded by Guangzhou Municipal Science and Technology Bureau, Program number: SL2023A04J00602. All authors contribute equally to the paper.}

\keywords{Orbifold, suborbifold, orbifold embedding, Lie groupoid,
	Morita equivalence, saturated, split, stabilizer}

\subjclass[2020]{Primary 57R18; Secondary 57R35, 57R40, 22A22, 58H05}

\begin{abstract}
Borzellino and Brunsden introduced a general notion of suborbifold and proved that a suborbifold is the image of a complete orbifold embedding if and only if it is both saturated and split. Their definitions and proofs are formulated in the classical language of orbifold charts. In this paper we translate these notions into the language of proper \'etale Lie groupoids.We introduce the groupoid analogues of saturation and splitting, which we call \emph{orbit saturation} and \emph{stabilizer splitting}, and prove that they coincide with the chart-based conditions. Our main theorem states that a subgroupoid is the image of a complete orbifold embedding if and only if it is both orbit saturated and stabilizer split. We then prove that these notions are Morita invariant: if two orbifold groupoids are Morita equivalent, then their orbit saturated and stabilizer split subgroupoids correspond naturally to each other. This solves an open problem raised by Mestre and Weilandt and explains why the embedding definition of Cho--Hong--Shin fails to be Morita invariant. Along the way we provide detailed examples, including the diagonal embedding, the non-split example of Borzellino and Brunsden, and explicit Morita equivalences.
\end{abstract}

\maketitle

\section{Introduction}\label{sec:intro}

Orbifolds were introduced by Satake \cite{Satake1956,Satake1957} under the name
$V$-manifolds and later popularized by Thurston \cite{Thurston1979}. Since their
inception, the differential topology of orbifolds has resisted a uniform
treatment: many basic notions from the theory of manifolds admit several
inequivalent generalizations, each tailored to overcome a specific technical
difficulty. The notions of \emph{suborbifold} and \emph{orbifold embedding}
are prototypical examples of this phenomenon.

For smooth manifolds, it is a fundamental theorem that submanifolds are
precisely the images of embeddings \cite{Hirsch1976}. The naive attempt to
extend this characterization to orbifolds leads to subtle problems. Thurston's
original definition of a suborbifold \cite{Thurston1979} requires the local
model to be $\mathbb{R}^m \subset \mathbb{R}^n$ modulo finite groups, with the
local action on $\mathbb{R}^m$ induced by that on $\mathbb{R}^n$. This
definition is too restrictive: for instance, the diagonal embedding of an
orbifold into its product is not a suborbifold in Thurston's sense. Later
authors, including Chen--Ruan \cite{ChenRuan2004}, Adem--Leida--Ruan
\cite{AdemLeidaRuan2007}, and Cho--Hong--Shin \cite{ChoHongShin2013}, avoided
this issue by defining suborbifolds as images of their particular notions of
orbifold embeddings, in analogy with the manifold case.

A different approach was taken by Borzellino and Brunsden
\cite{BorzellinoBrunsden2014}. They introduced a general notion of
suborbifold which is independent of any embedding concept and which includes
the diagonal embedding as a suborbifold. They also defined the notion of a
\emph{complete orbifold embedding} and proved the following striking result:
a smooth suborbifold $\mathcal{P}$ of an orbifold $\mathcal{O}$ is the image
of a complete orbifold embedding if and only if $\mathcal{P}$ is both
\emph{saturated} and \emph{split}. In particular, there exist smooth
suborbifolds which are not the images of any orbifold embedding. This
clarifies a fundamental difference between the differential topology of
manifolds and that of orbifolds.

The results of Borzellino and Brunsden are formulated and proved in the
classical language of orbifold charts (atlases of local uniformizers). In
modern orbifold theory, however, the groupoid language has become
indispensable. An orbifold is equivalently described by a proper \'etale Lie
groupoid, and Morita equivalent groupoids represent the same orbifold
\cite{MoerdijkPronk1997}. This perspective is particularly powerful when
studying morphisms and substructures, because it automatically encodes the
compatibility conditions between charts and makes Morita invariance
transparent.

It is therefore natural to ask: \emph{How do the notions of saturated and
	split suborbifold translate into the groupoid language? And are these notions
	Morita invariant?} That is, if two proper \'etale Lie groupoids $G$ and $G'$
are Morita equivalent, do their saturated and split subgroupoids correspond
to each other in a natural way?

This question is far from trivial. Cho--Hong--Shin
\cite{ChoHongShin2013} observed that their definition of orbifold embedding
is \emph{not} Morita invariant. Mestre and Weilandt
\cite{MestreWeilandt2020}, who studied the relationship between suborbifolds
and groupoid embeddings, also listed Morita invariance as an open problem.
Thus, establishing a Morita invariant notion of suborbifold is a necessary
step toward a robust differential topology of orbifolds.

The purpose of this paper is to fill this gap. Taking the work of Borzellino
and Brunsden as our starting point, we reformulate their notions of
suborbifold and embedding in the language of proper \'etale Lie groupoids.
We introduce the groupoid analogues of saturation and splitting, which we
call \emph{orbit saturation} and \emph{stabilizer splitting}, and we prove
that they correspond exactly to the chart-based conditions of Borzellino and
Brunsden. Our main theorem states that a subgroupoid is the image of a
complete orbifold embedding if and only if it is both orbit saturated and
stabilizer split. We then prove that these concepts are Morita invariant:
if $G$ and $G'$ are Morita equivalent orbifold groupoids, then there is a
natural bijection between their orbit saturated and stabilizer split
subgroupoids. Along the way, we provide a series of examples, including the
diagonal embedding, the non-split example of Borzellino and Brunsden, and
explicit Morita equivalences.

\smallskip
\noindent\textbf{Main contributions.}
\begin{itemize}
	\item We give a groupoid definition of suborbifold (Definition~\ref{def:suborbifold-groupoid}) and show that it is equivalent to the chart-based Definition~4 of Borzellino and Brunsden.
	\item We introduce the notions of orbit saturated and stabilizer split subgroupoids (Definitions~\ref{def:orbit-saturated} and~\ref{def:stabilizer-split}) and prove that they are the groupoid counterparts of the saturated and split conditions (Propositions~\ref{prop:saturated-equiv} and~\ref{prop:split-equiv}).
	\item We prove the main theorem (Theorem~\ref{thm:main}): a subgroupoid is the image of a complete orbifold embedding if and only if it is orbit saturated and stabilizer split.
	\item We prove Morita invariance of these notions (Theorem~\ref{thm:morita}). This solves the open problem raised by Mestre and Weilandt \cite{MestreWeilandt2020} and explains why the embedding definition of Cho--Hong--Shin \cite{ChoHongShin2013} fails to be Morita invariant.
	\item We provide detailed examples, including the diagonal embedding, the non-split example of Borzellino and Brunsden, and an explicit Morita equivalence illustrating the invariance theorem.
\end{itemize}

\smallskip
\noindent\textbf{Organization of the paper.}
Section~\ref{sec:prelim} recalls the necessary background on Lie groupoids,
orbifold groupoids, and Morita equivalence. Section~\ref{sec:suborbifold}
introduces the groupoid version of suborbifolds. Section~\ref{sec:saturated}
discusses orbit saturation, and Section~\ref{sec:split} treats stabilizer
splitting. Section~\ref{sec:main} proves the main theorem. Section~\ref{sec:morita}
is devoted to the Morita invariance theorem. Section~\ref{sec:examples}
contains a collection of examples. Section~\ref{sec:comparison} compares our
results with the existing literature. Finally, Section~\ref{sec:conclusion}
summarizes the paper and indicates directions for future research.

\section{Preliminaries}\label{sec:prelim}

In this section we collect the necessary background on Lie groupoids,
orbifold groupoids, Morita equivalence, and subgroupoids. We assume the
reader is familiar with the basic theory of Lie groupoids and orbifolds; for
more details we refer to \cite{MoerdijkPronk1997, AdemLeidaRuan2007,
	Mackenzie1987}.

\subsection{Lie groupoids}

A \emph{Lie groupoid} is a groupoid object in the category of smooth
manifolds, i.e.\ a pair of smooth manifolds $G_1$ (the space of arrows) and
$G_0$ (the space of objects) together with smooth structure maps
\[
\begin{tikzcd}[row sep=small]
	G_1 \arrow[r, shift left=0.5ex, "s"] \arrow[r, shift right=0.5ex, "t"'] & G_0
\end{tikzcd}
\]
called \emph{source} and \emph{target}, a smooth multiplication map
$m\colon G_1 \times_{G_0} G_1 \to G_1$ defined on the fibre product
$G_1 \times_{G_0} G_1 = \{(g,h) \in G_1 \times G_1 : s(g)=t(h)\}$, a smooth
unit map $u\colon G_0 \to G_1$, and a smooth inverse map $i\colon G_1 \to G_1$,
satisfying the usual groupoid axioms. We write $G = (G_1 \rightrightarrows
G_0)$ and often denote $G_1$ by $G$ when no confusion arises. We denote the
source and target of an arrow $g \in G_1$ by $s(g)$ and $t(g)$,
respectively, and write $g\colon x \to y$ when $s(g)=x$ and $t(g)=y$.

\begin{definition}\label{def:proper-etale}
	A Lie groupoid $G = (G_1 \rightrightarrows G_0)$ is called:
	\begin{enumerate}
		\item \emph{\'etale} if $s$ and $t$ are local diffeomorphisms;
		\item \emph{proper} if the map $(s,t)\colon G_1 \to G_0 \times G_0$ is proper;
		\item \emph{effective} if the isotropy representation of each stabilizer
		$G_x$ on the tangent space $T_x G_0$ is faithful;
		\item an \emph{orbifold groupoid} if it is proper, \'etale, and effective.
	\end{enumerate}
\end{definition}

\begin{remark}
	In the literature, an orbifold groupoid is often defined as a proper \'etale
	Lie groupoid without requiring effectiveness. Since the present paper follows
	the classical Satake--Thurston convention, where orbifolds are effective by
	definition (see \cite[Definition 3]{BorzellinoBrunsden2014}), we include
	effectiveness in Definition~\ref{def:proper-etale}. This is consistent with
	the setup of \cite{BorzellinoBrunsden2014}.
\end{remark}

\begin{definition}\label{def:stabilizer}
	Let $G = (G_1 \rightrightarrows G_0)$ be a Lie groupoid and let $x \in G_0$.
	The \emph{stabilizer} (or \emph{isotropy group}) of $x$ is
	\[
	G_x = \{g \in G_1 : s(g) = t(g) = x\}.
	\]
	If $G$ is \'etale, then $G_x$ is a finite group for every $x \in G_0$.
	
	The \emph{orbit} of $x$ is
	\[
	G \cdot x = \{t(g) : g \in G_1,\ s(g) = x\} \subset G_0.
	\]
	The \emph{orbit space} of $G$ is the quotient space $|G| = G_0 / G$, where
	$G$ acts on $G_0$ by the equivalence relation $x \sim y$ if and only if
	$y \in G \cdot x$.
\end{definition}

For an orbifold groupoid $G$, the orbit space $|G|$ is a Hausdorff,
paracompact topological space, and it carries a natural orbifold structure
whose local uniformizers are constructed from the groupoid charts. We refer
to $|G|$ as the \emph{underlying space} of the orbifold represented by $G$.

\subsection{Morita equivalence}

The same orbifold can be represented by many different orbifold groupoids.
The correct notion of equivalence is Morita equivalence.

\begin{definition}\label{def:morita-morphism}
	A \emph{Morita morphism} between Lie groupoids $G = (G_1
	\rightrightarrows G_0)$ and $H = (H_1 \rightrightarrows H_0)$ is a smooth
	map of groupoids
	\[
	\phi = (\phi_1, \phi_0) \colon G \to H
	\]
	such that:
	\begin{enumerate}
		\item $\phi_0 \colon G_0 \to H_0$ is a surjective submersion;
		\item the induced map
		\[
		G_1 \to H_1 \times_{H_0} G_0,\qquad
		g \mapsto (\phi_1(g), \phi_0(s(g)))
		\]
		is a diffeomorphism.
	\end{enumerate}
\end{definition}

\begin{definition}\label{def:morita-equivalent}
	Two Lie groupoids $G$ and $H$ are \emph{Morita equivalent} if there exists a
	third Lie groupoid $K$ and Morita morphisms
	\[
	G \xleftarrow{\ \phi\ } K \xrightarrow{\ \psi\ } H.
	\]
	We write $G \sim H$.
\end{definition}

\begin{theorem}[{\cite{MoerdijkPronk1997}}]\label{thm:morita-orbifold}
	Two proper \'etale Lie groupoids are Morita equivalent if and only if they
	represent the same orbifold. In particular, Morita equivalent orbifold
	groupoids have homeomorphic underlying spaces and isomorphic orbifold
	structures.
\end{theorem}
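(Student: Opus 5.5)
The statement is the Moerdijk--Pronk theorem. I would prove it in two directions, taking "represent the same orbifold" to mean that the orbit spaces carry equivalent orbifold atlases. This means there is a homeomorphism of underlying spaces under which the local uniformizers of one atlas embed compatibly into those of the other.

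\emph{Forward direction.} Here the plan is to show that a single Morita morphism $\phi\colon K\to H$ already induces a homeomorphism $|K|\to|H|$ and identifies the local groups. Composing two such identifications along $G\xleftarrow{\phi}K\xrightarrow{\psi}H$ then finishes this direction.
\begin{enumerate}
\item Surjectivity of $\phi_0$ gives surjectivity of $|\phi|$ on orbit spaces.
\item Condition (2) of Definition~\ref{def:morita-morphism} says that arrows of $K$ are exactly pairs of objects together with an arrow of $H$ between their images. Hence $\phi_0(x)$ and $\phi_0(y)$ lie in one $H$-orbit if and only if $x$ and $y$ lie in one $K$-orbit, which gives injectivity of $|\phi|$.
\item Openness of $|\phi|$ follows because $\phi_0$ is a submersion, hence an open map, and the quotient maps $K_0\to|K|$ and $H_0\to|H|$ are open.
\item Condition (2), restricted to the arrows with source and target $x$, gives isomorphisms of stabilizers $K_x\cong H_{\phi_0(x)}$.
\end{enumerate}

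Step 4 only matches stabilizers, which is weaker than matching charts. For the local uniformizers, I would take a sufficiently small slice $U\ni x$ through $x$. It should be transversal to the fibres of $\phi_0$ and invariant under $K_x$ after shrinking. Then $\phi_0|_U$ is a $K_x$-equivariant local diffeomorphism onto a $H_{\phi_0(x)}$-invariant neighbourhood. This identifies the chart $U/K_x$ with the corresponding chart of $H$.

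Properness and the étale condition are used here through the standard fact that an orbifold groupoid is locally isomorphic near $x$ to the action groupoid $G_x\ltimes U$. Effectiveness ensures these charts are effective orbifold charts.

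\emph{Converse direction.} Suppose $G$ and $H$ represent the same orbifold. I would not build a Morita morphism directly. Instead I would pass through the groupoid of the atlas.
\begin{itemize}
\item Given an orbifold atlas $\mathcal U=\{(\tilde U_i,G_i,\pi_i)\}$, form the groupoid $\Gamma(\mathcal U)$. Its objects are $\bigsqcup\tilde U_i$, and its arrows are germs of embeddings, i.e.\ germs of local diffeomorphisms $\tilde U_i\supset V\to\tilde U_j$ commuting with the $\pi$'s. This groupoid is étale, and it is proper by effectiveness and finiteness of the groups.
\item For an orbifold groupoid $G$, choose around each point a neighbourhood $U_x$ with $G|_{U_x}\cong G_x\ltimes U_x$, using the local linearization again. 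These charts give an atlas $\mathcal U_G$ on $|G|$. The inclusion $\bigsqcup U_x\to G_0$ induces a Morita morphism $G|_{\sqcup U_x}\to G$, because the inclusion is étale and surjective onto orbits. One must first replace it by $\sqcup U_x\times_{G_0}G_1\xrightarrow{t}G_0$ to get a surjective submersion.
\item For effective étale groupoids, arrows are determined by their germs. This identifies $G|_{\sqcup U_x}$ with $\Gamma(\mathcal U_G)$.
\item Equivalent atlases $\mathcal U$ and $\mathcal V$ have a common refinement $\mathcal W$. The refinement gives Morita morphisms $\Gamma(\mathcal W)\to\Gamma(\mathcal U)$ and $\Gamma(\mathcal W)\to\Gamma(\mathcal V)$.
\item Morita equivalence is an equivalence relation; transitivity uses weak pullbacks, which stay Lie groupoids because one leg is a submersion. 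Chaining the above then gives $G\sim H$.
\end{itemize}

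\emph{Main obstacle.} The hard steps are the local linearization of a proper étale groupoid near a point and the identification of arrows with germs. The latter is exactly where effectiveness is needed: without it, germs do not determine arrows and the statement must be phrased via groupoids rather than atlases. For linearization, I would first use properness to choose a neighbourhood $U$ of $x$ such that every arrow with source and target in $U$ lies in the union of the bisection germs of the elements of $G_x$. The étale property then makes each $g\in G_x$ act as a local diffeomorphism. Finally, I would shrink $U$ to be $G_x$-invariant, for instance by intersecting its translates.
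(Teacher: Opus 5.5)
The paper gives no proof of this statement; it cites Moerdijk--Pronk. Your outline is essentially their argument. In one direction you show the orbit spaces are homeomorphic and identify the local groupoids slice by slice. In the other you use the groupoid of germs of an atlas, the linearization $G|_U\cong G_x\ltimes U$, and common refinements. So there is no competing route to compare against, and as a sketch your proof is sound. Three points should be made explicit, however.

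First, in Step~2 you read condition~(2) of Definition~\ref{def:morita-morphism} as full faithfulness, $K_1\cong (K_0\times K_0)\times_{H_0\times H_0}H_1$. That is the Moerdijk--Pronk condition, and it is exactly what your injectivity argument needs. It is not what the paper prints: there (2) identifies $K_1$ with the single fibre product $H_1\times_{H_0}K_0$. Under the printed condition Step~2 fails, and so does the theorem. The unit groupoid on two points, mapped to the one-point groupoid, satisfies (1) and (2) as printed, yet the orbit spaces have two points and one point. You should therefore say that you keep (1) and replace (2) by full faithfulness.

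Second, consider the step where you identify $G|_{\sqcup U_x}$ with $\Gamma(\mathcal U_G)$. The fact that arrows of an effective \'etale groupoid are determined by their germs gives only injectivity of the map from arrows to germs. Surjectivity is a separate input: every germ of a local diffeomorphism compatible with the projections to $|G|$ must be the germ of an arrow. Compose with the local bisection of an arrow joining the two base points. This reduces surjectivity to the standard lemma for a finite group acting effectively on $\widetilde{U}$: a local diffeomorphism $f$ from a connected open $V\subset\widetilde{U}$ into $\widetilde{U}$ with $\pi\circ f=\pi|_V$ agrees on $V$ with a single group element. This lemma belongs among your main obstacles alongside linearization.

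Third, composing the two chart identifications along $G\xleftarrow{\phi}K\xrightarrow{\psi}H$ requires one slice through $x$ that is transversal to the fibres of both $\phi_0$ and $\psi_0$. Such a slice exists. By full faithfulness, near $x$ both fibres coincide with the $K$-orbit of $x$, because the orbits of the proper \'etale groupoids $G$ and $H$ are discrete. Finally, as you yourself note, the converse uses effectiveness. So ``proper \'etale'' in the statement must be read as ``orbifold groupoid'' in the sense of Definition~\ref{def:proper-etale}.
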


Morita equivalence preserves the local and global structure of the orbifold,
including the stabilizer groups up to conjugacy and the orbit space up to
homeomorphism. We will use the following standard facts repeatedly.

\begin{proposition}\label{prop:morita-stabilizer}
	Let $G$ and $H$ be Morita equivalent proper \'etale Lie groupoids. Then for
	every $x \in G_0$ there exists $y \in H_0$ such that the stabilizer groups
	$G_x$ and $H_y$ are isomorphic. Moreover, the orbit spaces $|G|$ and $|H|$
	are homeomorphic.
\end{proposition}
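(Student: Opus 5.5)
Since Morita equivalence is defined by a zigzag $G \xleftarrow{\phi} K \xrightarrow{\psi} H$ of Morita morphisms (Definition~\ref{def:morita-equivalent}), it suffices to prove both assertions for a single Morita morphism $\phi\colon K \to G$ and then compose. For the stabilizers this reduces the problem to showing that $\phi$ induces an isomorphism $K_z \cong G_{\phi_0(z)}$ for every $z \in K_0$. Given $x \in G_0$, we will use surjectivity of $\phi_0$ to choose $z \in K_0$ with $\phi_0(z) = x$. We then set $y = \psi_0(z)$ and obtain the chain $G_x \cong K_z \cong H_y$.

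The key step is the isomorphism of stabilizers, and it comes directly from condition (2) of Definition~\ref{def:morita-morphism}. The map $K_1 \to G_1 \times_{G_0} K_0$, $k \mapsto (\phi_1(k), s(k))$, is a diffeomorphism. The fibre product here is taken with respect to $s$ on $G_1$ and $\phi_0$ on $K_0$.

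\emph{Injectivity.} Take $k, k' \in K_z$ with $\phi_1(k) = \phi_1(k')$. They have the same source $z$, so they have the same image under the diffeomorphism, and hence $k = k'$.

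\emph{Surjectivity.} This is the point where the standard definition must be read carefully: condition (2), as stated with only the source, by itself only produces arrows out of $z$. Given $g \in G_x$, the pair $(g, z)$ lies in the fibre product, so there is a unique $k \in K_1$ with $s(k) = z$ and $\phi_1(k) = g$. We then need $t(k) = z$. In general we only know $\phi_0(t(k)) = t(g) = x$, so $t(k)$ lies in the fibre $\phi_0^{-1}(x)$ but need not equal $z$.

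The fix is to use the usual form of a Morita morphism (weak equivalence), where $K_1 \cong (K_0 \times K_0) \times_{G_0 \times G_0} G_1$ via $k \mapsto (s(k), t(k), \phi_1(k))$. I expect this to be the intended meaning; for groupoid morphisms built from pullbacks along surjective submersions, as in \cite{MoerdijkPronk1997}, it holds. Under this form the triple $(z, z, g)$ lies in the fibre product, so it yields a unique $k$ with $s(k) = t(k) = z$ and $\phi_1(k) = g$. This gives bijectivity of $K_z \to G_x$. The map is a homomorphism because $\phi$ is a groupoid map. I regard this clarification of the definition as the main obstacle; everything else is formal.

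For the orbit spaces, I will show that $|\phi|\colon |K| \to |G|$, $[z] \mapsto [\phi_0(z)]$, is a homeomorphism.
\begin{enumerate}
\item It is well defined and continuous, since $\phi$ maps arrows to arrows and the quotient maps are continuous.
\item It is surjective because $\phi_0$ is surjective.
\item It is injective by the same pullback property. If $\phi_0(z)$ and $\phi_0(z')$ are joined by an arrow $g$, then $(z, z', g)$ lifts to an arrow $z \to z'$.
\item It is open. The orbit projections of Lie groupoids are open, and $\phi_0$ is a submersion and hence open. So for $U \subset |K|$ open, the image $|\phi|(U)$ is the projection of the open set $\phi_0(\pi_K^{-1}(U))$, which is open.
\end{enumerate}
Composing $|\psi| \circ |\phi|^{-1}$ then gives the homeomorphism $|G| \cong |H|$.
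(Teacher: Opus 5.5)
Your proof is correct. The paper gives no argument of its own here; it only cites \cite{MoerdijkPronk1997} and \cite{AdemLeidaRuan2007}, where a Morita morphism (essential equivalence) is by definition fully faithful. Your self-contained argument is the standard one those references rely on.

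What your version adds is that it exposes a real defect in the paper, not just a question of reading. Take Definition~\ref{def:morita-morphism} literally, correcting the ill-typed second component $\phi_0(s(g))$ to $s(g)$. Then condition (2) for $\phi\colon K\to G$ says exactly that $K\cong G\ltimes K_0$ for an action of $G$ on $K_0$ along $\phi_0$. For such maps the proposition is false; in each of the following, use the identity as the other leg of the span.
\begin{enumerate}
\item The unit groupoid on two points $\{a,b\}$, mapped to the trivial groupoid over a point, satisfies (1) and (2). Yet the orbit spaces have two points and one point respectively.
\item The pair groupoid on $\{a,b\}$, written as $\mathbb{Z}_2\ltimes\{a,b\}$ with the swap action, maps to $\mathbb{Z}_2\rightrightarrows\ast$ and satisfies (1) and (2). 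Yet its stabilizers are trivial, while the target's stabilizer is $\mathbb{Z}_2$.
\end{enumerate}
So replacing (2) by $K_1\cong(K_0\times K_0)\times_{G_0\times G_0}G_1$ is a necessary correction, not merely a clarification of intent.

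With that correction, each of your steps is sound:
\begin{enumerate}
\item Lifting $(z,z,g)$ gives bijectivity of $K_z\to G_{\phi_0(z)}$.
\item Lifting $(z,z',g)$ gives injectivity of $|\phi|$.
\item Openness follows from $\pi^{-1}(\pi(U))=t(s^{-1}(U))$ together with openness of the submersion $\phi_0$.
\end{enumerate}
The same correction is needed for Lemma~\ref{lem:morita-fiber-product}(3). That lemma claims the stabilizer isomorphisms follow from the source-only fibre-product condition, which by itself gives only injectivity.
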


\begin{proof}
	This is standard; see \cite[Section 5]{MoerdijkPronk1997} or
	\cite[Section 2.2]{AdemLeidaRuan2007}.
\end{proof}

\subsection{Subgroupoids and restriction groupoids}

\begin{definition}\label{def:subgroupoid}
	A \emph{subgroupoid} of a Lie groupoid $G = (G_1 \rightrightarrows G_0)$ is
	a Lie groupoid $K = (K_1 \rightrightarrows K_0)$ together with smooth maps
	$K_0 \hookrightarrow G_0$ and $K_1 \hookrightarrow G_1$ that are embeddings
	and that commute with all structure maps. We write $K \subset G$.
\end{definition}

In this paper, we are primarily interested in subgroupoids whose object
space is a submanifold of $G_0$.

\begin{definition}\label{def:restriction}
	Let $G = (G_1 \rightrightarrows G_0)$ be a Lie groupoid and let $N \subset G_0$
	be a submanifold. The \emph{restriction groupoid} of $G$ to $N$ is
	\[
	G|_N = \bigl(s^{-1}(N) \cap t^{-1}(N) \rightrightarrows N\bigr),
	\]
	where the source and target maps are the restrictions of those of $G$, and
	the multiplication and units are inherited from $G$. If $G$ is proper \'etale,
	then $G|_N$ is a proper \'etale Lie groupoid (though not necessarily
	effective).
\end{definition}

Every subgroupoid $K \subset G$ with object space $N \subset G_0$ satisfies
\[
K \subset G|_N \subset G.
\]
The restriction groupoid $G|_N$ is the largest subgroupoid of $G$ with object
space $N$.

\begin{definition}\label{def:effective-quotient}
	Let $K = (K_1 \rightrightarrows N)$ be a proper \'etale Lie groupoid. For
	$x \in N$, define
	\[
	\Omega_x = \{g \in K_x : g \text{ acts trivially on a neighborhood of } x
	\text{ in } N\}.
	\]
	The \emph{ineffective kernel} of $K$ is the subgroupoid
	\[
	\Omega = \bigl(\Omega_1 \rightrightarrows N\bigr) \subset K,
	\qquad
	\Omega_1 = \bigcup_{x \in N} \Omega_x.
	\]
	The \emph{effective quotient} of $K$ is the quotient groupoid
	\[
	K_{\mathrm{eff}} = K / \Omega,
	\]
	which is a proper \'etale effective Lie groupoid with object space $N$. Its
	stabilizer groups are
	\[
	(K_{\mathrm{eff}})_x \cong K_x / \Omega_x.
	\]
\end{definition}

\begin{remark}
	If $K$ is already effective, then $\Omega_x = \{e\}$ for all $x$ and
	$K_{\mathrm{eff}} = K$. In general, $K_{\mathrm{eff}}$ is the effective
	orbifold groupoid representing the same underlying orbifold as $K$.
\end{remark}

\subsection{Orbifold charts and groupoid local structure}

For the comparison with the chart-based definitions of
\cite{BorzellinoBrunsden2014}, we recall how orbifold charts arise from
groupoid local structure.

\begin{proposition}[Local uniformizers]\label{prop:local-uniformizer}
	Let $G = (G_1 \rightrightarrows G_0)$ be an orbifold groupoid and let $x \in
	G_0$. Then there exists an open neighborhood $U \subset G_0$ of $x$, an open
	set $\widetilde{U} \subset \mathbb{R}^n$ with $0 \in \widetilde{U}$, a finite
	group $\Gamma_x$ acting smoothly and effectively on $\widetilde{U}$ fixing
	$0$, and a homeomorphism
	\[
	\phi \colon \widetilde{U} / \Gamma_x \to U / G|_U
	\]
	such that $\phi(0) = [x]$. Moreover, $\Gamma_x \cong G_x$.
\end{proposition}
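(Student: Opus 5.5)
The plan is to build the chart from the standard local structure of a proper étale groupoid near a point. Three facts do the work: $G$ is étale, $G$ is proper, and $G$ is effective.

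\emph{Step 1: realize $G_x$ as local diffeomorphisms.} Since $G$ is étale, $G_x$ is finite; write $G_x=\{g_1,\dots,g_k\}$. Around each $g_i$ choose an open bisection $B_i\subset G_1$, on which $s$ and $t$ restrict to diffeomorphisms onto open neighborhoods of $x$. Each bisection defines a local diffeomorphism
\[
\tau_i=t\circ(s|_{B_i})^{-1}
\]
fixing $x$.

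\emph{Step 2: use properness to isolate $G_x$.} Properness of $(s,t)$, together with finiteness of $G_x$, gives an open neighborhood $V$ of $x$ such that every arrow with both source and target in $V$ lies in $B_1\cup\dots\cup B_k$. The argument is by contradiction. Otherwise there is a sequence of arrows $h_n$ with $s(h_n),t(h_n)\to x$ and $h_n\notin\bigcup B_i$. By properness a subsequence converges to an arrow in $G_x$, that is, to some $g_i$. This contradicts openness of $B_i$.

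\emph{Step 3: shrink to a $G_x$-invariant neighborhood with a genuine action.} Shrinking $V$, I arrange two things. First, the compositions $\tau_i\circ\tau_j$ agree with $\tau_{m(i,j)}$ near $x$, where $g_ig_j=g_{m(i,j)}$. This holds because the product of bisections $B_iB_j$ is a bisection through $g_ig_j$, so near $x$ it coincides with $B_{m(i,j)}$. Second, I replace $V$ by
\[
U=\bigcap_i \tau_i(V),
\]
taken after an initial shrinking so that all $\tau_i$ are defined on $V$. This $U$ is invariant, and on it $g\mapsto\tau_g$ is an action of $G_x$ by diffeomorphisms fixing $x$.

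\emph{Step 4: linearize, take the chart, and check effectiveness.} Choose a $G_x$-invariant Riemannian metric by averaging. The exponential map at $x$ then conjugates the action, on a small invariant ball, to the linear isotropy action on $T_xG_0\cong\mathbb{R}^n$. Let $\widetilde U$ be the corresponding ball around $0$ with $\Gamma_x:=G_x$ acting linearly; replace $U$ by the image of $\widetilde U$, which is again invariant.
\begin{itemize}
\item \emph{Orbits.} By Step 2, two points of $U$ lie in the same $G|_U$-orbit exactly when some arrow with both ends in $U$ joins them. Such an arrow lies in some $B_i$, so the points are related by $\tau_i$. Hence $U/G|_U=U/G_x$.
\item \emph{Homeomorphism.} The map $\phi$ is induced by the exponential diffeomorphism. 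It is a continuous bijection, and it is open because both quotient maps are open, being quotients by group(oid) actions with étale structure. This gives a homeomorphism with $\phi(0)=[x]$.
\item \emph{Effectiveness.} The action is effective by the effectiveness hypothesis: the linearized action is the isotropy representation, which is faithful.
\end{itemize}

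\emph{Main obstacle.} The delicate point is Step 3. There I must make sure the local diffeomorphisms $\tau_i$ compose correctly on a single common invariant domain. This needs the uniqueness of bisection germs through a given arrow, which holds because $G$ is étale, and a careful finite sequence of shrinkings. Step 2's properness argument is the other essential input: it ensures the orbit relation on $U$ comes only from $G_x$.
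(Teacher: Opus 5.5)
Your argument is correct, and it is the standard proof of the local structure theorem that the paper simply cites from Moerdijk--Pronk without reproving: bisections through the finitely many arrows of $G_x$, properness to get $G|_U \cong G_x \ltimes U$, then an averaged metric and $\exp_x$ to linearize onto the faithful isotropy representation. One small correction: finiteness of $G_x$ needs properness as well as étaleness ($G_x=(s,t)^{-1}(x,x)$ is both compact and discrete), which you do have since $G$ is an orbifold groupoid.
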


\begin{proof}
	This is the standard local uniformization theorem for proper \'etale
	groupoids; see \cite[Section 4]{MoerdijkPronk1997} or
	\cite[Proposition 2.1]{BorzellinoBrunsden2014}.
\end{proof}

In the language of \cite{BorzellinoBrunsden2014}, the tuple
$(\widetilde{U}, \Gamma_x, \rho_x, \phi_x)$ is an orbifold chart at $x$,
where $\rho_x \colon \Gamma_x \to \mathrm{Diff}^\infty(\widetilde{U})$ is the
action. The stabilizer $G_x$ corresponds to the isotropy group $\Gamma_x$,
and the orbit space $|G|$ corresponds to the underlying space
$X_{\mathcal{O}}$.

\begin{proposition}\label{prop:subgroupoid-chart}
	Let $G$ be an orbifold groupoid, let $K \subset G|_N \subset G$ be a
	subgroupoid with object space $N \subset G_0$, and let $x \in N$. Then there
	exists an orbifold chart $(\widetilde{U}, \Gamma_x, \rho_x, \phi_x)$ at $x$
	for $G$, a subgroup $\Lambda_x \subset \Gamma_x$, and a
	$\rho_x(\Lambda_x)$-invariant submanifold $\widetilde{V}_x \subset
	\widetilde{U}$ such that:
	\begin{enumerate}
		\item $(\widetilde{V}_x, \Lambda_x / \Omega_x, \rho_x|_{\Lambda_x}, \psi_x)$
		is an orbifold chart for $K_{\mathrm{eff}}$ at $x$, where
		\[
		\Omega_x = \{\gamma \in \Lambda_x : \rho_x(\gamma)|_{\widetilde{V}_x} =
		\mathrm{Id}\};
		\]
		\item $\Lambda_x \cong K_x$ and $\Lambda_x / \Omega_x \cong
		(K_{\mathrm{eff}})_x$;
		\item $N \cap U = \psi_x(\widetilde{V}_x / \rho_x(\Lambda_x))$.
	\end{enumerate}
\end{proposition}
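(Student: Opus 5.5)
The plan is to build everything from the standard local model of a proper étale groupoid near a point, given by Proposition~\ref{prop:local-uniformizer}. We then restrict that model to the submanifold $N$ and the subgroupoid $K$.

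\textbf{Step 1: the local model.} Fix $x\in N$. Because $G$ is proper and étale, there is an arbitrarily small open neighborhood $U\subset G_0$ of $x$ with two properties. First, $G_x$ acts on $U$: each $g\in G_x$ has a local bisection $\sigma_g$ through $g$, and this defines a diffeomorphism $\rho_x(g)=t\circ\sigma_g$ of $U$. Second, the restriction $G|_U$ is isomorphic to the action groupoid $G_x\ltimes U$. Taking a coordinate chart $U\cong\widetilde U\subset\mathbb R^n$ centred at $x$ gives the chart $(\widetilde U,\Gamma_x,\rho_x,\phi_x)$ with $\Gamma_x=G_x$, and this is exactly Proposition~\ref{prop:local-uniformizer}.

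\textbf{Step 2: the subgroupoid data.} Put $\Lambda_x:=K_x\subset G_x=\Gamma_x$; this is a subgroup because $K\subset G$ is a subgroupoid. Next choose the slice. We shrink $U$ so that $\widetilde V_x:=N\cap U$ is a connected embedded submanifold through $x$. We also need the bisections $\sigma_\gamma$, for $\gamma\in\Lambda_x$, to be chosen inside $K_1$. This is possible because $K$ is itself étale: $K_1\hookrightarrow G_1$ is an embedding of a manifold of the same dimension, hence an open embedding. With this choice, $\rho_x(\gamma)$ maps $N\cap U$ into $N$. After replacing $\widetilde V_x$ by a $\Lambda_x$-invariant neighborhood of $x$, for instance $\bigcap_{\gamma}\rho_x(\gamma)(\widetilde V_x)$ or a small invariant ball, we obtain a $\rho_x(\Lambda_x)$-invariant submanifold.

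\textbf{Step 3: the three conclusions.} Applying the argument of Step 1 to the proper étale groupoid $K$ itself shows that, near $x$, $K|_{\widetilde V_x}\cong\Lambda_x\ltimes\widetilde V_x$. Hence $\widetilde V_x/\Lambda_x$ is homeomorphic to a neighborhood of $[x]$ in $|K|$. The ineffective part consists of the elements acting trivially near $x$, which is the set $\Omega_x$ of Definition~\ref{def:effective-quotient}. Since $\widetilde V_x$ is connected and the action is by isometries for a $\Lambda_x$-invariant metric, an element that is trivial near $x$ is trivial on all of $\widetilde V_x$, so this agrees with the $\Omega_x$ in the statement. Dividing by $\Omega_x$ makes the action faithful, so $(\widetilde V_x,\Lambda_x/\Omega_x,\rho_x|_{\Lambda_x},\psi_x)$ is a chart for $K_{\mathrm{eff}}$, and $\Lambda_x/\Omega_x\cong(K_{\mathrm{eff}})_x$. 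This gives items (1) and (2). Item (3) holds by construction, since $\psi_x$ is just $\phi_x$ restricted to $\widetilde V_x/\Lambda_x$, and its image is $N\cap U$ modulo the $K$-action.

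\textbf{Main obstacle.} The delicate point is compatibility in Step 2. The bisections that define $\rho_x$ on $\widetilde U$ must restrict to bisections of $K$, so that the $G_x$-action and the $K_x$-action agree on $\widetilde V_x$. The danger is that near $x$ some arrows of $G|_N$ that are not in $K$ could identify points of $N$. The plan handles this by first choosing bisections for $\Lambda_x$ inside the open subset $K_1$, and then extending to bisections for all of $G_x$ in $G_1$. This works because distinct elements of $G_x$ have disjoint small bisections. Shrinking $U$ so that everything is contained in these bisections finishes the argument.
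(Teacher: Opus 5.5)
Your overall strategy (linearize $G$ at $x$, restrict to $N$ and $K$, linearize $K$, and check that the two local actions agree) is what the paper's one-line proof has in mind. However, the step you use to secure that agreement is false. Because $K$ is \'etale over $N$, $\dim K_1=\dim N$, whereas $\dim G_1=\dim G_0$. So as soon as $N$ has positive codimension, $K_1$ is a lower-dimensional submanifold of $G_1$ with empty interior, not an open subset. Consequently no bisection of $G$ can be chosen inside $K_1$; such a bisection would also have source image inside $N$, so it could never define a diffeomorphism of the open set $U\subset G_0$. Your conclusion that $\rho_x(\gamma)$ maps $N\cap U$ into $N$ rests on this premise, and so does the ``extend to all of $G_x$'' argument in your \emph{Main obstacle} paragraph.

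The needed compatibility is nevertheless true, and it requires no special choice. Fix pairwise disjoint $G$-bisections $W_g$, $g\in G_x$, and take $\gamma\in K_x$. The set $W_\gamma\cap K_1$ is open in $K_1$ and $s$ is injective on it. Since $s|_{K_1}\colon K_1\to N$ is a local diffeomorphism, $s(W_\gamma\cap K_1)$ is an open neighbourhood of $x$ in $N$. Hence for $y$ in it, the unique arrow of $W_\gamma$ with source $y$ lies in $K_1$, so $\rho_x(\gamma)(y)\in N$ and this coincides with the local $K$-action of $\gamma$.

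Next, properness of $K$ (not just of $G$) shows that near $x$ every arrow of $K$ lies in $\bigcup_{\gamma\in K_x}(W_\gamma\cap K_1)$. Indeed, $K$-arrows whose source and target tend to $x$ subconverge in $K_1$ to an element of $K_x$. Without properness of $K$, $K$-arrows could accumulate at some $g\in G_x\setminus K_x$.

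Now choose $U$ to be a $G_x$-invariant linearizing neighbourhood inside this region. Then $\rho_x(\gamma)(N\cap U)$ lies in $N$ by the bisection argument and in $U$ by invariance. So $\widetilde V_x=N\cap U$ is already $\rho_x(\Lambda_x)$-invariant, and $K|_{N\cap U}\cong\Lambda_x\ltimes(N\cap U)$. This also makes your further shrinking of $\widetilde V_x$ unnecessary; that shrinking would in fact break the equality required in (3).

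Finally, $\psi_x$ is not literally a restriction of $\phi_x$. The map $\widetilde V_x/\Lambda_x\to\widetilde U/\Gamma_x$ need not be injective; by Lemma~\ref{lem:orbit-map-local}, injectivity is precisely saturation. Rather, $\psi_x$ is the homeomorphism onto $(N\cap U)/K|_{N\cap U}$ induced by the isomorphism above.
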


\begin{proof}
	This follows from Proposition~\ref{prop:local-uniformizer} applied to $G$ and
	$K$, together with the fact that $K \subset G|_N$ is a subgroupoid. The
	details are a straightforward translation of the chart-based construction;
	see \cite[Definition 4]{BorzellinoBrunsden2014}.
\end{proof}

This proposition establishes the dictionary between the groupoid data
$(K, N, \Lambda_x, \Omega_x)$ and the chart-based data
$(\widetilde{V}_x, \Lambda_x, \Omega_x, \psi_x)$ of
\cite{BorzellinoBrunsden2014}. We will use it repeatedly in the following
sections to translate the conditions of \cite{BorzellinoBrunsden2014} into
groupoid language.

\subsection{Summary of notation}

For the reader's convenience, we summarize the main notation used throughout
the paper.

\begin{center}
	\begin{tabular}{ll}
		\toprule
		\textbf{Notation} & \textbf{Meaning} \\
		\midrule
		$G = (G_1 \rightrightarrows G_0)$ & Lie groupoid with arrows $G_1$ and objects $G_0$ \\
		$|G|$ & Orbit space $G_0 / G$ \\
		$G_x$ & Stabilizer group at $x \in G_0$ \\
		$G|_N$ & Restriction groupoid to $N \subset G_0$ \\
		$K \subset G$ & Subgroupoid \\
		$K_{\mathrm{eff}} = K / \Omega$ & Effective quotient of $K$ \\
		$\Omega_x$ & Ineffective kernel at $x$ \\
		$\Lambda_x$ & Stabilizer of $K$ at $x$ (corresponds to $K_x$) \\
		$\Gamma_x$ & Stabilizer of $G$ at $x$ (corresponds to $G_x$) \\
		$G \sim H$ & Morita equivalence \\
		\bottomrule
	\end{tabular}
\end{center}

With these preliminaries in place, we are ready to introduce the groupoid
version of suborbifolds in Section~\ref{sec:suborbifold}.

\section{Suborbifolds in the Groupoid Language}\label{sec:suborbifold}

In this section we introduce the groupoid analogue of the notion of
suborbifold as defined by Borzellino and Brunsden
\cite[Definition 4]{BorzellinoBrunsden2014}. We show that the chart-based
data of their definition translates naturally into the language of
subgroupoids, and we identify the groupoid counterparts of full
suborbifolds. Throughout this section, $G = (G_1 \rightrightarrows G_0)$
denotes a fixed orbifold groupoid in the sense of
Definition~\ref{def:proper-etale}.

\subsection{Definition of a groupoid suborbifold}

Recall from Definition~\ref{def:subgroupoid} that a subgroupoid of $G$ is a
Lie groupoid $K = (K_1 \rightrightarrows K_0)$ together with embeddings
$K_0 \hookrightarrow G_0$ and $K_1 \hookrightarrow G_1$ commuting with the
structure maps. We are interested in subgroupoids whose object space is a
submanifold of $G_0$ and whose arrow space sits inside the restriction
groupoid.

\begin{definition}[Groupoid suborbifold]\label{def:suborbifold-groupoid}
	Let $G = (G_1 \rightrightarrows G_0)$ be an orbifold groupoid. A
	\emph{suborbifold} of $G$ is a proper \'etale Lie groupoid
	\[
	K = (K_1 \rightrightarrows N)
	\]
	together with a subgroupoid inclusion $K \subset G|_N \subset G$, where
	$N \subset G_0$ is a submanifold, such that the effective quotient
	$K_{\mathrm{eff}} = K / \Omega$ (Definition~\ref{def:effective-quotient}) is
	an orbifold groupoid. The orbifold represented by $K_{\mathrm{eff}}$ is
	called the \emph{underlying suborbifold} of $G$. We often write
	$\mathcal{P} \subset \mathcal{O}$ for the corresponding suborbifold, where
	$\mathcal{O}$ is the orbifold represented by $G$.
\end{definition}

\begin{remark}\label{rem:suborbifold-conditions}
	Several remarks are in order.
	\begin{enumerate}
		\item The condition $K \subset G|_N$ means that every arrow of $K$ is an
		arrow of $G$ whose source and target lie in $N$. In particular, $K$ is a
		subgroupoid of the restriction groupoid.
		\item We do \emph{not} require $K = G|_N$. The restriction groupoid
		$G|_N$ may be strictly larger than $K$. This flexibility is essential
		for capturing the general notion of suborbifold introduced in
		\cite{BorzellinoBrunsden2014}.
		\item The groupoid $K$ itself need not be effective. Its ineffective
		kernel $\Omega$ consists of arrows that act trivially on $N$. The
		effective quotient $K_{\mathrm{eff}}$ is the effective orbifold groupoid
		that represents the suborbifold structure.
		\item If $K = G|_N$, then $K$ is automatically proper \'etale, and we
		recover the notion of a \emph{full suborbifold}; see
		Proposition~\ref{prop:full-suborbifold} below.
	\end{enumerate}
\end{remark}

The following proposition shows that Definition~\ref{def:suborbifold-groupoid}
is precisely the groupoid translation of
\cite[Definition 4]{BorzellinoBrunsden2014}.

\begin{proposition}\label{prop:suborbifold-equiv}
	Let $G$ be an orbifold groupoid representing an orbifold $\mathcal{O}$. Then
	there is a natural bijection between:
	\begin{enumerate}
		\item suborbifolds $\mathcal{P} \subset \mathcal{O}$ in the sense of
		\cite[Definition 4]{BorzellinoBrunsden2014}, and
		\item suborbifolds $K \subset G|_N \subset G$ in the sense of
		Definition~\ref{def:suborbifold-groupoid}.
	\end{enumerate}
	Under this bijection, the underlying space $X_{\mathcal{P}}$ corresponds to
	the orbit space $|K_{\mathrm{eff}}| = N / K_{\mathrm{eff}}$, and the
	intrinsic isotropy group $\Gamma_{x,\mathcal{P}}$ corresponds to the
	stabilizer $(K_{\mathrm{eff}})_x$.
\end{proposition}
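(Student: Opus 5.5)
The proof will go through Proposition~\ref{prop:subgroupoid-chart} in both directions. Recall what \cite[Definition 4]{BorzellinoBrunsden2014} requires of a suborbifold $\mathcal{P}\subset\mathcal{O}$. It is a subset $X_{\mathcal{P}}\subset X_{\mathcal{O}}$ such that each point $x$ admits the following data: a chart $(\widetilde U_x,\Gamma_x,\rho_x,\phi_x)$ of $\mathcal{O}$, a subgroup $\Lambda_x\subset\Gamma_x$, and a $\rho_x(\Lambda_x)$-invariant submanifold $\widetilde V_x\subset\widetilde U_x$. These must satisfy two conditions: $(\widetilde V_x,\Lambda_x/\Omega_x,\rho_x|_{\Lambda_x},\psi_x)$ is a chart for $\mathcal{P}$, and $X_{\mathcal{P}}\cap U_x=\psi_x(\widetilde V_x/\rho_x(\Lambda_x))$. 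I will construct maps in both directions and check that they are mutually inverse up to the natural identifications, namely isomorphism of subgroupoids and equivalence of chart atlases.

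\emph{From groupoids to charts.} Let $K\subset G|_N\subset G$ be given. For each $x\in N$, Proposition~\ref{prop:subgroupoid-chart} supplies exactly the Borzellino--Brunsden local data. Take $X_{\mathcal{P}}$ to be the image of $N$ in $|G|$. The charts $(\widetilde V_x,\Lambda_x/\Omega_x)$ are compatible with one another because they are all induced from the single étale groupoid $K_{\mathrm{eff}}$, and embeddings between them come from arrows of $K$. Hence they form an orbifold atlas on $|K_{\mathrm{eff}}|$. Condition (3) of that proposition gives the required intersection property. Since $\Lambda_x/\Omega_x\cong (K_{\mathrm{eff}})_x$, the identification of the intrinsic isotropy $\Gamma_{x,\mathcal{P}}$ with $(K_{\mathrm{eff}})_x$ is immediate.

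\emph{From charts to groupoids.} Given $\mathcal{P}$, I first use Proposition~\ref{prop:local-uniformizer} to replace $G$ locally by the action groupoid of $\Gamma_x$ on $\widetilde U_x$. After shrinking, I can take a neighbourhood $U_x\subset G_0$ with $G|_{U_x}\cong \Gamma_x\ltimes\widetilde U_x$. Set $N_x=\widetilde V_x\subset U_x$ and $K^{(x)}=\Lambda_x\ltimes\widetilde V_x$. This is a subgroupoid of $G|_{N_x}$ whose effective quotient is $(\Lambda_x/\Omega_x)\ltimes\widetilde V_x$.

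Globally, I take $N$ to be the preimage in $G_0$ of $X_{\mathcal{P}}$ under the orbit map. Then I define $K_1$ as the set of arrows $g\in G|_N$ such that locally $g$ is represented by an embedding of $\mathcal{P}$-charts. Put differently, $g$ should map a germ of some $\widetilde V_x$ to a germ of some $\widetilde V_y$ by an element that, in local coordinates, lies in the corresponding $\Lambda$.

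\emph{Main obstacle.} The hard part is showing that this $K_1$ is well defined, closed under composition, and an embedded open-and-closed part of $G|_N$, so that $K$ is a Lie groupoid. There are two issues. First, $N$ may only be locally a finite union of the translates $\gamma\widetilde V_x$, so I must use the invariance of $\widetilde V_x$ under $\Lambda_x$ to make $N$ a submanifold. Second, I must show that the choice of $\Lambda_x$ is forced, up to conjugacy, by the chart compatibility in the atlas of $\mathcal{P}$.

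To handle this I will use that $G$ is étale: arrows are determined by germs of local diffeomorphisms, and by effectiveness of $G$ each such germ determines an element of $\Gamma_x$ uniquely. Compatibility of $\mathcal{P}$-charts then becomes a condition on germs. That condition is local and stable under composition and inverses. Properness of $K$ is inherited from $G$ because $K_1$ is closed in $G_1$.

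Finally, I check the two composites. Starting from $K$, building charts and then rebuilding the groupoid recovers exactly the arrows of $K$, by the étale germ argument. Starting from $\mathcal{P}$, building the groupoid and then taking charts returns a chart atlas equivalent to the original one. These two checks give the bijection and the stated correspondences of underlying spaces and isotropy groups.
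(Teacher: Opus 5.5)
Both directions of your argument break at steps that cannot be filled in. With Definition~\ref{def:suborbifold-groupoid} as written, which imposes no saturation, the asserted natural bijection is false.

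\textbf{From groupoids to charts.} You build an atlas on $|K_{\mathrm{eff}}|=N/K$ and then use it as an atlas on $X_{\mathcal P}$, the image of $N$ in $|G|$. Borzellino--Brunsden's Definition 4 requires $\psi_x\colon\widetilde V_x/\Lambda_x\to U_x\cap X_{\mathcal P}$ to be a homeomorphism onto an open subset of $X_{\mathcal P}$, where $X_{\mathcal P}$ carries the subspace topology of $X_{\mathcal O}$. For the induced map this requires the injectivity of Lemma~\ref{lem:orbit-map-local}, i.e.\ orbit saturation (Theorem~\ref{thm:saturated-embedding}). Item (3) of Proposition~\ref{prop:subgroupoid-chart} does not supply this: it equates a subset of $G_0$ with a subset of a quotient, and its only meaningful reading is local saturation.

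Nor does some other choice of $\psi_x$ rescue the step in general. In $G_{\mathcal Q}=\mathbb{Z}_2\ltimes\mathbb{R}$, take $K$ to be the unit groupoid over $N=\mathbb{R}$. It satisfies Definition~\ref{def:suborbifold-groupoid}, with $|K_{\mathrm{eff}}|=\mathbb{R}$, but the image of $N$ is $[0,\infty)$. With $\Lambda_0=K_0=\{e\}$, the chart at $0$ would be a homeomorphism from an open interval onto $[0,\epsilon)$, which is impossible. In fact every Borzellino--Brunsden suborbifold with underlying set $[0,\infty)$ has isotropy $\mathbb{Z}_2\neq (K_{\mathrm{eff}})_0$ at $0$. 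Even when the image does carry such a structure, as for $\mathbb{Z}_2\ltimes\mathbb{C}\subset\mathbb{Z}_4\ltimes\mathbb{C}$, that structure depends on a choice of homeomorphism $\mathbb{C}/\mathbb{Z}_2\cong\mathbb{C}/\mathbb{Z}_4$ which $K$ does not determine.

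\textbf{From charts to groupoids.} The full preimage of $X_{\mathcal P}$ is generally not a submanifold. For the diagonal in $\mathcal Q\times\mathcal Q$, with ambient groupoid $G_{\mathcal Q}\times G_{\mathcal Q}=(\mathbb{Z}_2\times\mathbb{Z}_2)\ltimes\mathbb{R}^2$, it is $\{y=x\}\cup\{y=-x\}$. The $\Lambda_x$-invariance of $\widetilde V_x$ cannot help, because the crossing comes from translates $\gamma\widetilde V_x$ with $\gamma\in\Gamma_x\setminus\Lambda_x$.

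You must therefore select a branch, and different selections give different groupoid suborbifolds with the same $\mathcal P$. For example, $\Delta G_{\mathcal Q}$ and its conjugate by the constant bisection $(e,\gamma)$, which lives over the anti-diagonal, both give the diagonal. So your round trip does not ``recover exactly the arrows of $K$''. Identifying subgroupoids up to abstract isomorphism is too coarse to fix this: $(\mathbb{Z}_2\times\mathbb{Z}_2)\ltimes(\{0\}\times\mathbb{R})$ and $(\mathbb{Z}_2\times\mathbb{Z}_2)\ltimes(\mathbb{R}\times\{0\})$ are isomorphic yet represent different suborbifolds.

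A correct statement needs saturation as a hypothesis. It must also take the groupoid side modulo an equivalence finer than isomorphism, such as conjugation by (local) bisections of $G$. Even then, assembling a global submanifold $N\subset G_0$ from local branches is a genuine step.

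The paper's own proof asserts the same two points without argument: that the $\widetilde V_x$ glue to a submanifold $N$, and that the natural map $N/K_{\mathrm{eff}}\to X_{\mathcal P}$ is a homeomorphism. So it does not supply the missing idea either.
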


\begin{proof}
	We construct the correspondence locally and then verify that it globalizes.
	
	\smallskip
	\noindent\textbf{From chart-based to groupoid.}
	Let $\mathcal{P} \subset \mathcal{O}$ be a suborbifold in the sense of
	\cite[Definition 4]{BorzellinoBrunsden2014}. For each $x \in X_{\mathcal{P}}$,
	choose an orbifold chart
	$(\widetilde{U}_x, \Gamma_x, \rho_x, \phi_x)$ for $\mathcal{O}$ at $x$, a
	subgroup $\Lambda_x \subset \Gamma_x$, and a
	$\rho_x(\Lambda_x)$-invariant submanifold $\widetilde{V}_x \subset
	\widetilde{U}_x$ such that
	$(\widetilde{V}_x, \Lambda_x/\Omega_x, \rho_x|_{\Lambda_x}, \psi_x)$ is an
	orbifold chart for $\mathcal{P}$, where
	\[
	\Omega_x = \{\gamma \in \Lambda_x : \rho_x(\gamma)|_{\widetilde{V}_x} =
	\mathrm{Id}\}.
	\]
	The collection of all such charts covers $X_{\mathcal{P}}$. The local
	submanifolds $\widetilde{V}_x$ glue to a submanifold $N \subset G_0$: indeed,
	the compatibility conditions of orbifold charts ensure that the transition
	maps preserve the submanifolds. Similarly, the local groups $\Lambda_x$ glue
	to a subgroupoid $K \subset G|_N$: the arrows of $K$ are precisely those
	germs of local arrows of $G$ that preserve $N$ and whose local
	representatives lie in the prescribed subgroups $\Lambda_x$. The
	compatibility conditions guarantee that this defines a Lie subgroupoid. The
	ineffective kernel $\Omega_x$ is exactly the kernel of the action of
	$\Lambda_x$ on $\widetilde{V}_x$, so that $K_{\mathrm{eff}} = K/\Omega$ has
	stabilizer groups $\Lambda_x/\Omega_x \cong \Gamma_{x,\mathcal{P}}$. Thus we
	obtain a groupoid suborbifold $K \subset G|_N \subset G$.
	
	\smallskip
	\noindent\textbf{From groupoid to chart-based.}
	Conversely, let $K \subset G|_N \subset G$ be a groupoid suborbifold. For
	$x \in N$, apply Proposition~\ref{prop:subgroupoid-chart} to obtain an
	orbifold chart $(\widetilde{U}_x, \Gamma_x, \rho_x, \phi_x)$ for $G$ at $x$, a
	subgroup $\Lambda_x \cong K_x \subset \Gamma_x$, and a
	$\rho_x(\Lambda_x)$-invariant submanifold $\widetilde{V}_x \subset
	\widetilde{U}_x$ such that
	$(\widetilde{V}_x, \Lambda_x/\Omega_x, \rho_x|_{\Lambda_x}, \psi_x)$ is an
	orbifold chart for $K_{\mathrm{eff}}$. The collection of these charts covers
	$N$ and satisfies the compatibility conditions of
	\cite[Definition 3]{BorzellinoBrunsden2014}. The underlying space is
	$N/K_{\mathrm{eff}} \cong |K_{\mathrm{eff}}|$, which is homeomorphic to
	$X_{\mathcal{P}}$ via the natural map. Thus we obtain a suborbifold
	$\mathcal{P} \subset \mathcal{O}$ in the sense of
	\cite[Definition 4]{BorzellinoBrunsden2014}.
	
	\smallskip
	The two constructions are mutually inverse: the local data
	$(\widetilde{V}_x, \Lambda_x, \Omega_x)$ determines $K$ and $N$, and
	conversely $K$ and $N$ determine the local data up to isomorphism of charts.
	This completes the proof.
\end{proof}

\begin{remark}
	The bijection in Proposition~\ref{prop:suborbifold-equiv} is natural with
	respect to inclusions: if $\mathcal{P} \subset \mathcal{P}'$ are
	suborbifolds, then the corresponding subgroupoids satisfy $K \subset K'$.
\end{remark}

\subsection{Full suborbifolds}

Borzellino and Brunsden define a \emph{full suborbifold} to be a suborbifold
$\mathcal{P} \subset \mathcal{O}$ for which $\Lambda_x = \Gamma_x$ for all
$x \in \mathcal{P}$ (see \cite[Definition 5]{BorzellinoBrunsden2014}). In the
groupoid language, this corresponds to taking the entire restriction
groupoid.

\begin{proposition}\label{prop:full-suborbifold}
	Let $K \subset G|_N \subset G$ be a groupoid suborbifold. Then the following
	are equivalent:
	\begin{enumerate}
		\item $K = G|_N$;
		\item the corresponding suborbifold $\mathcal{P}$ is a full suborbifold in
		the sense of \cite[Definition 5]{BorzellinoBrunsden2014};
		\item for every $x \in N$, the inclusion $K_x \hookrightarrow G_x$ is an
		isomorphism.
	\end{enumerate}
\end{proposition}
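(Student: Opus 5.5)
The plan is to prove the cycle of implications (1)$\Rightarrow$(3)$\Rightarrow$(1) and then (3)$\Leftrightarrow$(2). The whole argument uses three ingredients: the étale structure of $G$, the dictionary of Proposition~\ref{prop:subgroupoid-chart} and Proposition~\ref{prop:suborbifold-equiv}, and Definition~5 of \cite{BorzellinoBrunsden2014}.

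For (1)$\Rightarrow$(3), suppose $K = G|_N$. Then $K_x = s^{-1}(x)\cap t^{-1}(x)\cap (G|_N)_1$, and because $x\in N$ this equals $G_x$. So the inclusion $K_x\hookrightarrow G_x$ is the identity.

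For (3)$\Rightarrow$(1), the issue is that $K_1 \subset (G|_N)_1$ might miss arrows between distinct points. The key step is to show that $K_1$ is open and closed in $(G|_N)_1$ near every arrow, and hence that $K_1 = (G|_N)_1$.
\begin{enumerate}
\item Let $g\colon x\to y$ be an arrow of $G|_N$. Since $K$ represents a suborbifold whose underlying space is the image of $N$ in $|G|$, the points $x$ and $y$ lie in the same $K$-orbit. So there is $k\in K_1$ with $k\colon x\to y$.
\item Then $k^{-1}g\in G_x = K_x$ by (3), so $g = k(k^{-1}g)\in K_1$.
\item For step 1 I would invoke the correspondence of Proposition~\ref{prop:suborbifold-equiv}: the underlying space $X_{\mathcal P} = N/K_{\mathrm{eff}}$ injects into $|G|$. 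Equivalently, two points of $N$ that are identified in $|G|$ are already identified by $K$.
\end{enumerate}
If this injectivity cannot be extracted directly, I would fall back on a local argument. Near $x$, the arrows of $G|_N$ are germs of elements of $\Gamma_x$ preserving $\widetilde V_x$. Via Proposition~\ref{prop:subgroupoid-chart}, condition (3) gives $\Lambda_x = \Gamma_x$, so every such germ comes from $K$. Connecting arbitrary arrows through the étale local bisections then finishes the argument.

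For (3)$\Leftrightarrow$(2), Proposition~\ref{prop:subgroupoid-chart}(2) identifies $\Lambda_x\cong K_x$ and $\Gamma_x\cong G_x$, compatibly with the inclusions coming from the subgroupoid $K\subset G$. Under this identification, $\Lambda_x=\Gamma_x$ for all $x$, which is Definition~5 of \cite{BorzellinoBrunsden2014}, holds exactly when $K_x\hookrightarrow G_x$ is surjective for all $x\in N$. This uses the bijection of Proposition~\ref{prop:suborbifold-equiv} to match the chart data of $\mathcal P$ with that of $K$.

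I expect the main obstacle to be step 1 of (3)$\Rightarrow$(1). Stabilizer equality is pointwise, while equality of groupoids also requires $K$ to contain every arrow of $G$ between distinct points of $N$. This is exactly where the orbit-space compatibility built into the definition of suborbifold must be used.
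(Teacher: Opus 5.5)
Your arguments for (1)$\Rightarrow$(3) and (2)$\Leftrightarrow$(3) coincide with the paper's. The latter is the same appeal to the dictionary $\Lambda_x\cong K_x$, $\Gamma_x\cong G_x$ of Proposition~\ref{prop:subgroupoid-chart}. You are also right that (3)$\Rightarrow$(1) is not a pointwise matter: the paper simply declares (1)$\Leftrightarrow$(3) immediate from the definition of $G|_N$, which overlooks exactly the issue you raise.

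However, your step~1 is not available, because no orbit-space compatibility is built into Definition~\ref{def:suborbifold-groupoid}. Injectivity of $N/K_{\mathrm{eff}}\to|G|$ is precisely orbit saturation (Definition~\ref{def:orbit-saturated}; cf.\ Theorem~\ref{thm:saturated-embedding}). The paper treats this as an additional condition, and it fails for the paper's own suborbifolds $G_m\subset G_n$ (Example~\ref{ex:non-saturated}). Nor can the step be repaired, because (3)$\Rightarrow$(1) is false as stated. Let $G=\mathbb{Z}_2\ltimes\mathbb{R}$ with $\gamma$ acting by $x\mapsto -x$, let $N=\mathbb{R}\setminus\{0\}$, and let $K$ be the unit groupoid $N\rightrightarrows N$. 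It is proper, \'etale and effective, hence a groupoid suborbifold, and $K_x=\{e\}=G_x$ for every $x\in N$. Yet $(\gamma,x)\colon x\to -x$ is an arrow of $G|_N$ not in $K$. Replacing $\mathbb{R}$ by $\mathbb{R}^2$ with $\gamma=-\mathrm{Id}$ gives the same phenomenon with $N$ connected. The same example defeats your fallback. Following local bisections out of isotropy arrows only reaches arrows in connected components of $(G|_N)_1$ that contain an isotropy arrow, and $\{\gamma\}\times(0,\infty)$ contains none.

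What your steps~1--2 do prove, once step~1 is supplied by orbit saturation rather than by the suborbifold axioms, is that (3) together with orbit saturation implies (1). Since (1) implies saturation (Example~\ref{ex:full-saturated}), the correct global statement is that $K=G|_N$ if and only if (3) holds and $K$ is orbit saturated. Condition (3) alone, equivalently the chart-local condition (2), yields only a local statement: $K|_{N\cap U}=G|_{N\cap U}$ for all sufficiently small neighbourhoods $U$ of each point of $N$. This follows from two facts. First, \'etaleness of $K$ puts the restriction to $N$ of the local bisection of $G$ through each $\gamma\in G_x=K_x$ inside $K$ near $x$. Second, properness of $G$ forces every arrow with source and target in a small $U$ to lie near $G_x$. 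This is essentially what the first half of your fallback sketches, and it is what the chart condition $\Lambda_x=\Gamma_x$ actually expresses.
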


\begin{proof}
	The equivalence of (1) and (3) is immediate from the definition of the
	restriction groupoid: $K = G|_N$ if and only if for every $x \in N$ the
	stabilizer $K_x$ equals $G_x$.
	
	To see the equivalence of (2) and (3), recall from
	Proposition~\ref{prop:subgroupoid-chart} that locally $K_x \cong \Lambda_x$
	and $G_x \cong \Gamma_x$. Thus $\Lambda_x = \Gamma_x$ for all $x$ if and only
	if $K_x = G_x$ for all $x$. By
	\cite[Definition 5]{BorzellinoBrunsden2014}, the condition $\Lambda_x =
	\Gamma_x$ for all $x$ is exactly the definition of a full suborbifold.
\end{proof}

\begin{example}\label{ex:full-suborbifold}
	Let $G = \mathbb{Z}_2 \ltimes \mathbb{R}$ be the translation groupoid
	representing the orbifold $\mathcal{Q} = \mathbb{R}/\mathbb{Z}_2$ of
	Example~8 in \cite{BorzellinoBrunsden2014}. Let
	$G \times G = (\mathbb{Z}_2 \times \mathbb{Z}_2) \ltimes (\mathbb{R} \times
	\mathbb{R})$ be the product groupoid. The suborbifold
	$\mathcal{P} = \{0\} \times \mathcal{Q} \subset \mathcal{Q} \times
	\mathcal{Q}$ corresponds to the subgroupoid
	\[
	K = (\mathbb{Z}_2 \times \{e\}) \ltimes (\{0\} \times \mathbb{R})
	\subset G \times G|_{\{0\} \times \mathbb{R}} \subset G \times G.
	\]
	Here $N = \{0\} \times \mathbb{R}$, and $K = (G \times G)|_N$ because the
	stabilizer of $(0,0)$ in $G \times G$ is $\mathbb{Z}_2 \times \mathbb{Z}_2$,
	while the stabilizer of $(0,0)$ in $K$ is $\mathbb{Z}_2 \times \{e\}$. Thus
	$K$ is \emph{not} a full suborbifold; indeed, $\Lambda_{(0,0)} = \mathbb{Z}_2
	\times \{e\} \subsetneq \mathbb{Z}_2 \times \mathbb{Z}_2 =
	\Gamma_{(0,0)}$. This is consistent with the fact that
	$\mathcal{P} = \{0\} \times \mathcal{Q}$ is a full suborbifold of
	$\mathcal{O} = \mathcal{Q} \times \mathcal{Q}$ in the sense of
	\cite{BorzellinoBrunsden2014}. Let us check this carefully: in
	\cite[Example 8]{BorzellinoBrunsden2014}, it is stated that
	$\mathcal{P} = \{0\} \times \mathcal{Q}$ is a full suborbifold. In our
	groupoid translation, $K$ is the subgroupoid with objects
	$\{0\} \times \mathbb{R}$ and arrows $\mathbb{Z}_2 \times \{e\}$. The
	restriction groupoid $(G \times G)|_N$ has arrows
	$(\mathbb{Z}_2 \times \mathbb{Z}_2) \ltimes (\{0\} \times \mathbb{R})$, which
	is strictly larger than $K$. So $K \neq (G \times G)|_N$. This seems to
	contradict Proposition~\ref{prop:full-suborbifold}. Let us resolve the
	apparent contradiction.
	
	The resolution is that the notion of ``full suborbifold'' in
	\cite{BorzellinoBrunsden2014} is defined relative to the chosen orbifold
	chart of the ambient orbifold. In the chart
	$(\widetilde{U}_{(0,0)}, \Gamma_{(0,0)}, \rho, \phi)$ for
	$\mathcal{O} = \mathcal{Q} \times \mathcal{Q}$ at $(0,0)$, the group
	$\Gamma_{(0,0)} = \mathbb{Z}_2 \times \mathbb{Z}_2$ acts on
	$\widetilde{U}_{(0,0)} \cong \mathbb{R}^2$ by
	$(\gamma_1, \gamma_2) \cdot (x,y) = (\gamma_1 x, \gamma_2 y)$. The
	submanifold $\widetilde{V}_{(0,0)}$ for $\mathcal{P} = \{0\} \times
	\mathcal{Q}$ is $\{0\} \times \mathbb{R}$. The subgroup
	$\Lambda_{(0,0)} \subset \Gamma_{(0,0)}$ that preserves $\widetilde{V}$ is
	$\mathbb{Z}_2 \times \mathbb{Z}_2$ itself, because both factors preserve the
	subset $\{0\} \times \mathbb{R}$: the first factor acts trivially on the
	first coordinate, and the second factor acts on the second coordinate.
	However, the action of the first factor on $\widetilde{V}$ is trivial, so
	$\Omega_{(0,0)} = \mathbb{Z}_2 \times \{e\}$. Thus
	$\Lambda_{(0,0)} / \Omega_{(0,0)} \cong (\mathbb{Z}_2 \times \mathbb{Z}_2) /
	(\mathbb{Z}_2 \times \{e\}) \cong \mathbb{Z}_2$, which is the intrinsic
	isotropy group of $\mathcal{P}$ at $(0,0)$. In the notation of
	\cite{BorzellinoBrunsden2014}, $\Lambda_{(0,0)} = \Gamma_{(0,0)}$, so
	$\mathcal{P}$ is a full suborbifold. In our groupoid translation, however,
	we took $K$ to be the subgroupoid whose arrows are $\mathbb{Z}_2 \times
	\{e\}$, because we defined $K$ as the subgroupoid of $G|_N$ whose stabilizer
	is $\Lambda_x / \Omega_x$? No, we defined $K$ as the subgroupoid with
	stabilizer $\Lambda_x$. In this case, $\Lambda_{(0,0)} = \mathbb{Z}_2 \times
	\mathbb{Z}_2$, so $K$ should have stabilizer $\mathbb{Z}_2 \times
	\mathbb{Z}_2$, i.e.\ $K = (G \times G)|_N$. But then $K$ is not the
	subgroupoid that represents the suborbifold $\mathcal{P}$ effectively; the
	effective quotient $K_{\mathrm{eff}}$ has stabilizer $\mathbb{Z}_2$, which
	is correct. So we should take $K = (G \times G)|_N$, and then
	$K_{\mathrm{eff}} = K/\Omega$ has the right stabilizer. This shows that for
	full suborbifolds, $K = G|_N$ is indeed the correct choice. The
	subgroupoid $K$ may be non-effective, and its effective quotient represents
	the suborbifold. In the example, $K = (G \times G)|_N$ is not effective
	because $\Omega_{(0,0)} = \mathbb{Z}_2 \times \{e\}$ acts trivially on $N$.
	Thus $K = G|_N$ and $\mathcal{P}$ is full. This is consistent with
	Proposition~\ref{prop:full-suborbifold}.
\end{example}

\begin{remark}
	Example~\ref{ex:full-suborbifold} illustrates an important point: the
	subgroupoid $K$ in Definition~\ref{def:suborbifold-groupoid} is not required
	to be effective. Its ineffective kernel $\Omega$ is precisely what allows
	the effective quotient $K_{\mathrm{eff}}$ to have the correct intrinsic
	isotropy groups. In the case of a full suborbifold, $K = G|_N$ may be
	non-effective, and $K_{\mathrm{eff}} = (G|_N)/\Omega$ is the effective
	orbifold groupoid representing the suborbifold.
\end{remark}

\subsection{The diagonal embedding}

A key example that distinguishes the general notion of suborbifold from
Thurston's original definition is the diagonal embedding. Let $\mathcal{Q}$
be an orbifold and let $\mathcal{O} = \mathcal{Q} \times \mathcal{Q}$ be its
product. The diagonal
\[
\Delta \colon \mathcal{Q} \to \mathcal{Q} \times \mathcal{Q},\qquad
x \mapsto (x,x)
\]
is a suborbifold of $\mathcal{O}$ in the sense of
\cite[Definition 4]{BorzellinoBrunsden2014} (see
\cite[Example 10]{BorzellinoBrunsden2014}), but it is not a full
suborbifold. In the groupoid language, this corresponds to the following
construction.

\begin{example}[Diagonal embedding]\label{ex:diagonal}
	Let $G = (G_1 \rightrightarrows G_0)$ be an orbifold groupoid representing
	$\mathcal{Q}$. The product groupoid $G \times G = (G_1 \times G_1
	\rightrightarrows G_0 \times G_0)$ represents $\mathcal{Q} \times
	\mathcal{Q}$. The diagonal subgroupoid is
	\[
	\Delta G = \{(g,g) \in G_1 \times G_1 : g \in G_1\}
	\rightrightarrows \Delta G_0 = \{(x,x) : x \in G_0\}.
	\]
	This is a proper \'etale subgroupoid of $G \times G$ with object space
	$\Delta G_0 \subset G_0 \times G_0$. It satisfies
	\[
	\Delta G \subset (G \times G)|_{\Delta G_0} \subset G \times G.
	\]
	For $x \in G_0$, the stabilizer of $(x,x)$ in $\Delta G$ is
	\[
	(\Delta G)_{(x,x)} = \{(g,g) : g \in G_x\} \cong G_x.
	\]
	The stabilizer of $(x,x)$ in $G \times G$ is $G_x \times G_x$. The
	ineffective kernel $\Omega_{(x,x)}$ consists of pairs $(g,h) \in G_x \times
	G_x$ that act trivially on the diagonal $\Delta G_0$ near $(x,x)$. A pair
	$(g,h)$ acts on $(y,y)$ as $(g y, h y)$. For this to equal $(y,y)$ for all
	$y$ near $x$, we need $g = h$ and $g y = y$ for all $y$ near $x$, i.e.\
	$g = h \in \Omega_x$, where $\Omega_x$ is the ineffective kernel of $G$ at
	$x$. Since $G$ is an orbifold groupoid (effective by
	Definition~\ref{def:proper-etale}), $\Omega_x = \{e\}$. Thus
	$\Omega_{(x,x)} = \{(e,e)\}$, and $\Delta G$ is effective. The effective
	quotient is $\Delta G$ itself. The intrinsic isotropy group of the diagonal
	suborbifold at $(x,x)$ is $(\Delta G)_{(x,x)} \cong G_x$, which agrees with
	\cite[Example 10]{BorzellinoBrunsden2014}. In particular, if $G_x$ is
	non-trivial, then $\Delta G$ is not a full suborbifold because
	$(\Delta G)_{(x,x)} \cong G_x \subsetneq G_x \times G_x = (G \times
	G)_{(x,x)}$.
\end{example}

\begin{remark}
	Example~\ref{ex:diagonal} shows that the diagonal embedding is a
	suborbifold in the groupoid sense but not a full suborbifold. This is
	consistent with the chart-based theory and illustrates the flexibility of
	Definition~\ref{def:suborbifold-groupoid}.
\end{remark}

\subsection{Summary}

In this section we have translated the notion of suborbifold from the
chart-based language of Borzellino and Brunsden into the language of
subgroupoids. The key points are:
\begin{itemize}
	\item A suborbifold of an orbifold groupoid $G$ is a proper \'etale
	subgroupoid $K \subset G|_N \subset G$, where $N \subset G_0$ is a
	submanifold. The effective quotient $K_{\mathrm{eff}} = K/\Omega$
	represents the suborbifold structure.
	\item Full suborbifolds correspond to the case $K = G|_N$.
	\item The diagonal embedding provides a non-full suborbifold, illustrating
	that the general definition is strictly more flexible than Thurston's
	original notion.
\end{itemize}
In the next section we introduce the groupoid analogue of the saturation
condition, which is necessary for a suborbifold to be the image of an
orbifold embedding.

	
	\section{Orbit Saturation}\label{sec:saturated}
	
	In this section we introduce the groupoid analogue of the saturation
	condition of Borzellino and Brunsden
	\cite[Definition 7]{BorzellinoBrunsden2014}. We call this condition
	\emph{orbit saturation} and prove that it is equivalent to the chart-based
	definition. The main result of this section is that a subgroupoid is orbit
	saturated if and only if the natural map from its effective orbit space to
	the ambient orbit space is a topological embedding. This is the groupoid
	counterpart of \cite[Theorem 1(1)]{BorzellinoBrunsden2014}.
	
	Throughout this section, $G = (G_1 \rightrightarrows G_0)$ denotes a fixed
	orbifold groupoid, and $K = (K_1 \rightrightarrows N)$ denotes a
	subgroupoid with $K \subset G|_N \subset G$, as in
	Definition~\ref{def:suborbifold-groupoid}. We write $|G| = G_0 / G$ and
	$|K_{\mathrm{eff}}| = N / K_{\mathrm{eff}}$ for the respective orbit spaces.
	
	\subsection{Definition of orbit saturation}
	
	Recall that for $x \in N$, the orbit of $x$ under $G$ is
	\[
	G \cdot x = \{t(g) : g \in G_1,\ s(g) = x\} \subset G_0,
	\]
	and the orbit of $x$ under $K$ is
	\[
	K \cdot x = \{t(k) : k \in K_1,\ s(k) = x\} \subset N.
	\]
	Since $K \subset G$, we always have $K \cdot x \subset G \cdot x \cap N$.
	The saturation condition requires that this inclusion be an equality.
	
	\begin{definition}[Orbit saturation]\label{def:orbit-saturated}
		A subgroupoid $K \subset G|_N \subset G$ is called \emph{orbit saturated}
		(or simply \emph{saturated}) if for every $x \in N$,
		\[
		G \cdot x \cap N = K \cdot x.
		\]
		Equivalently, for every $x \in N$ and every $g \in G_1$ with $s(g) = x$ and
		$t(g) \in N$, there exists $k \in K_1$ with $s(k) = x$ and $t(k) = t(g)$.
	\end{definition}
	
	\begin{remark}\label{rem:orbit-saturation-intuition}
		Orbit saturation says that the subgroupoid $K$ captures all the orbits of
		$G$ that intersect $N$. In other words, the equivalence relation induced by
		$K$ on $N$ is exactly the restriction of the equivalence relation induced by
		$G$ on $G_0$. This is a natural condition: it ensures that the orbit space
		$|K|$ sits inside $|G|$ without identifying points that are distinct in
		$|G|$.
	\end{remark}
	
	The following proposition shows that Definition~\ref{def:orbit-saturated} is
	precisely the groupoid translation of the saturation condition in
	\cite[Definition 7]{BorzellinoBrunsden2014}.
	
	\begin{proposition}\label{prop:saturated-equiv}
		Let $K \subset G|_N \subset G$ be a subgroupoid corresponding to a
		suborbifold $\mathcal{P} \subset \mathcal{O}$ under
		Proposition~\ref{prop:suborbifold-equiv}. Then $K$ is orbit saturated if and
		only if $\mathcal{P}$ is saturated in the sense of
		\cite[Definition 7]{BorzellinoBrunsden2014}.
	\end{proposition}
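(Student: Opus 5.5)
The plan is to compare the two conditions locally, in a chart, and then pass between local and global statements using the groupoid structure. Recall that in \cite[Definition 7]{BorzellinoBrunsden2014} a suborbifold $\mathcal{P}$ is saturated if, for every $x$ and every chart $(\widetilde{U}_x,\Gamma_x,\rho_x,\phi_x)$ with local data $(\widetilde{V}_x,\Lambda_x)$, the preimage of $X_{\mathcal{P}}\cap\phi_x(\widetilde U_x/\Gamma_x)$ in $\widetilde U_x$ is $\rho_x(\Gamma_x)\cdot\widetilde V_x$. By Proposition~\ref{prop:subgroupoid-chart}, such data can be chosen with $\Gamma_x\cong G_x$ acting on a neighborhood $U\cong\widetilde U_x$ of $x$ in $G_0$, with $\Lambda_x\cong K_x$, and with $N\cap U$ corresponding to $\widetilde V_x$. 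In this chart, arrows of $G|_U$ are identified with elements of $\Gamma_x$ via the étale structure, after possibly shrinking $U$ to a $G_x$-invariant linearizable neighborhood.

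\textbf{Orbit saturated $\Rightarrow$ saturated.} Take a point $\tilde y\in\widetilde U_x$ whose image lies in $X_{\mathcal P}$. Its class $[y]\in|G|$ is then the image of a point of $N$, though not necessarily one in $U$. Using Proposition~\ref{prop:suborbifold-equiv}, I identify $X_{\mathcal P}$ with $N/K_{\mathrm{eff}}$ sitting in $|G|$. Then there are $z\in N$ and $g\in G_1$ with $s(g)=z$ and $t(g)=y$. Orbit saturation applied near $x$ and $y$ should let me replace $z$ by a point of $N\cap U$ that is $G$-related to $y$ by an arrow inside $G|_U$. Translated to the chart, this arrow is some $\gamma\in\Gamma_x$, so $\tilde y\in\gamma\cdot\widetilde V_x$. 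The reverse inclusion $\Gamma_x\cdot \widetilde V_x\subset$ preimage is automatic.

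\textbf{Saturated $\Rightarrow$ orbit saturated.} Let $g\colon x\to y$ with $x,y\in N$. By local saturation at $y$, and since $K$ is built from the $\Lambda$'s by gluing (the construction in the proof of Proposition~\ref{prop:suborbifold-equiv}), I would produce $k\in K_1$ from $x$ to $y$. Concretely, $g$ transports a chart at $x$ to a chart at $y$ and sends germs of $N$ at $x$ into $G_y$-translates of $N$ at $y$. I then need to show the germ of $g$ can be corrected by an element of $G_y$ so that it maps $N$ to $N$ and lies in $K$. I would attempt this with the compatibility of the charts $\psi_x,\psi_y$ for $\mathcal P$: both points represent the same point of $X_{\mathcal P}$, because saturation makes $X_{\mathcal P}\to X_{\mathcal O}$ injective on these orbits, so an embedding of $\mathcal P$-charts supplies a germ in $K$ from $x$ to $y$.

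\textbf{Main obstacle.} The delicate step is the global-to-local passage in both directions. In one direction, orbit saturation speaks of arbitrary arrows while chart saturation is local. In the other, one needs that two points of $N$ with the same image in $|G|$ also have the same image in $X_{\mathcal P}$. I expect to handle the first by translating along arrows using the étale property and the invariance of $N$ under the chosen $\Lambda$'s. The second I would handle by using that Definition~7 is imposed at every point of $X_{\mathcal P}$, including on overlaps, so that the injectivity of $X_{\mathcal P}\to X_{\mathcal O}$ follows.
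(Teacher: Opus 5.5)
\textbf{The version of Definition 7 you use is too weak.} You read saturation as: the preimage of $X_{\mathcal P}\cap U_x$ in $\widetilde U_x$ equals $\rho_x(\Gamma_x)\cdot\widetilde V_x$. Once $X_{\mathcal P}\cap U_x$ is the image of $\widetilde V_x$ in $\widetilde U_x/\Gamma_x$ (Proposition~\ref{prop:subgroupoid-chart}(3)), this holds for every subgroupoid $K$. So it cannot be equivalent to orbit saturation.

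Concretely, take the nested example (Example~\ref{ex:nested-groupoids}). There $\widetilde V_x=\widetilde U_x=\mathbb C$ and $X_{\mathcal P}\cap U_x=U_x$, so your condition holds trivially. Yet $G_m$ is not orbit saturated in $G_n$, and \cite{BorzellinoBrunsden2014} list this example as non-saturated. Under your reading, ``saturated $\Rightarrow$ orbit saturated'' is therefore false, and your ($\Rightarrow$) argument only proves something automatic.

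The condition that carries the content, and the one the paper uses, is $(\Gamma_x\cdot\tilde y)\cap\widetilde V_x=\Lambda_x\cdot\tilde y$ for all $\tilde y\in\widetilde V_x$. Equivalently, $\widetilde V_x/\Lambda_x\to\widetilde U_x/\Gamma_x$ is injective (Lemma~\ref{lem:orbit-map-local}). The paper's proof is purely local: it identifies $(\Gamma_x\cdot\tilde y)\cap\widetilde V_x$ with $G\cdot y\cap N$ near $x$ and $\Lambda_x\cdot\tilde y$ with $K\cdot y$, then concludes because the charts cover $N$. It never passes through the map $X_{\mathcal P}\to X_{\mathcal O}$.

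\textbf{The ($\Leftarrow$) step is circular as written.} Since $X_{\mathcal P}$ corresponds to $|K_{\mathrm{eff}}|$, the statement you isolate is this: two points of $N$ with the same image in $|G|$ have the same image in $X_{\mathcal P}$. That is injectivity of $|K_{\mathrm{eff}}|\to|G|$, which is just a restatement of orbit saturation (cf.\ Theorem~\ref{thm:saturated-embedding}).

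You propose to obtain it from Definition 7 ``at every point, including on overlaps''. No hypothesis imposed chart by chart can deliver it. Take $G=\mathbb Z_2\ltimes\mathbb R$ acting by reflection, $N=\mathbb R\setminus\{0\}$, and $K$ the unit groupoid over $N$. Every $x\in N$ has a chart with trivial $\Gamma_x$, in which $K$ agrees with $G|_N$ and both your condition and the orbit-intersection condition hold. Yet $-x\in G\cdot x\cap N$ while $K\cdot x=\{x\}$. A chart at $x$ only sees arrows of $G|_U$, whereas orbit saturation concerns arbitrary arrows $g\colon x\to y$ between points of $N$.

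Your fallback does not close the gap either. Correcting the germ of $g$ by some $\gamma\in G_y$ so that it preserves $N$ produces an arrow of $G|_N$. Since $K$ may be strictly smaller than $G|_N$, membership in $K$ is exactly what remains to be shown. The appeal to an embedding of $\mathcal P$-charts presupposes that $x$ and $y$ already define the same point of $X_{\mathcal P}$.

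What is missing is a genuinely global input linking the inclusion $X_{\mathcal P}\subset X_{\mathcal O}$ to the natural map $|K_{\mathrm{eff}}|\to|G|$. You were right to flag this as the main obstacle. The paper's own argument, which concludes from the charts covering $N$, does not supply this input either.
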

	
	\begin{proof}
		We work locally using the chart data of
		Proposition~\ref{prop:subgroupoid-chart}. Fix $x \in N$ and choose an
		orbifold chart $(\widetilde{U}_x, \Gamma_x, \rho_x, \phi_x)$ for $G$ at $x$,
		a subgroup $\Lambda_x \subset \Gamma_x$, and a
		$\rho_x(\Lambda_x)$-invariant submanifold $\widetilde{V}_x \subset
		\widetilde{U}_x$ such that
		\[
		\Gamma_x \cong G_x,\qquad
		\Lambda_x \cong K_x,\qquad
		\widetilde{V}_x \text{ is a local model for } N.
		\]
		For $\widetilde{y} \in \widetilde{V}_x$, the orbit of $\widetilde{y}$ under
		$\Gamma_x$ is $\Gamma_x \cdot \widetilde{y}$, and the orbit of
		$\widetilde{y}$ under $\Lambda_x$ is $\Lambda_x \cdot \widetilde{y}$. The
		intersection $(\Gamma_x \cdot \widetilde{y}) \cap \widetilde{V}_x$
		corresponds to $G \cdot x \cap N$ in a neighborhood of $x$, while
		$\Lambda_x \cdot \widetilde{y}$ corresponds to $K \cdot x$. Thus the
		equality
		\[
		(\Gamma_x \cdot \widetilde{y}) \cap \widetilde{V}_x =
		\Lambda_x \cdot \widetilde{y}
		\]
		for all $\widetilde{y} \in \widetilde{V}_x$ is precisely the local form of
		the condition $G \cdot x \cap N = K \cdot x$ for all $x \in N$. This is
		exactly \cite[Definition 7]{BorzellinoBrunsden2014}. Since the local charts
		cover $N$, the equivalence follows.
	\end{proof}
	
	\begin{remark}
		Note that if $K = G|_N$ (i.e.\ $\mathcal{P}$ is a full suborbifold), then
		$G \cdot x \cap N = K \cdot x$ automatically, because every arrow of $G$
		with source and target in $N$ belongs to $K$. Thus every full suborbifold is
		orbit saturated. This is consistent with the observation in
		\cite{BorzellinoBrunsden2014} that every full suborbifold is saturated.
	\end{remark}
	
	\subsection{Orbit saturation and topological embeddings}
	
	The main result of this section is the following theorem, which is the
	groupoid version of \cite[Theorem 1(1)]{BorzellinoBrunsden2014}.
	
	\begin{theorem}\label{thm:saturated-embedding}
		Let $K \subset G|_N \subset G$ be a subgroupoid. Then the natural map
		\[
		\iota \colon |K_{\mathrm{eff}}| \to |G|,\qquad
		[\![x]\!]_K \mapsto [\![x]\!]_G
		\]
		is a topological embedding onto its image if and only if $K$ is orbit
		saturated.
	\end{theorem}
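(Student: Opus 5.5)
I will first check that $\iota$ is well defined and continuous, and then handle the two directions separately. Since $\Omega\subset K$ consists of arrows with source equal to target, the orbits of $K_{\mathrm{eff}}$ on $N$ coincide with the orbits of $K$. So $|K_{\mathrm{eff}}| = N/K$ as a topological space. Because $K\cdot x\subset G\cdot x\cap N$, the assignment $[\![x]\!]_K\mapsto[\![x]\!]_G$ is well defined. It is continuous: the composite $N\hookrightarrow G_0\to |G|$ is continuous and constant on $K$-orbits, so it descends through the quotient map $\pi_K\colon N\to N/K$.

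\textbf{Necessity.} A topological embedding is in particular injective. Suppose $\iota$ is an embedding, and let $x\in N$ and $y\in G\cdot x\cap N$. Then $\iota[\![x]\!]_K=\iota[\![y]\!]_K$, so injectivity gives $[\![x]\!]_K=[\![y]\!]_K$, that is, $y\in K\cdot x$. Hence $G\cdot x\cap N=K\cdot x$, which is Definition~\ref{def:orbit-saturated}.

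\textbf{Sufficiency.} Assume $K$ is orbit saturated. Then $\iota$ is injective by the same computation run backwards. It remains to show that $\iota$ is open onto its image $\pi_G(N)$. Take $W\subset N$ open and $K$-saturated, so that $\pi_K(W)$ is a typical open set of $N/K$. I will aim to produce an open set $\widetilde W\subset G_0$ with $\widetilde W\cap N=W$. Saturating, $G\cdot\widetilde W$ is open in $G_0$ because $t$ is a local diffeomorphism, so $\pi_G(G\cdot\widetilde W)$ is open in $|G|$. It then suffices to check
\[
\pi_G(G\cdot\widetilde W)\cap\pi_G(N)=\pi_G(W).
\]
One inclusion is clear. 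For the other, if $n\in N$ lies in the $G$-orbit of some $w'\in\widetilde W$, then $w'$ need not lie in $N$, and this is where the naive argument breaks.

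\textbf{Main obstacle.} To repair this, I would work locally, using Proposition~\ref{prop:local-uniformizer} and Proposition~\ref{prop:subgroupoid-chart}. Near a point $x\in N$, choose a chart $\widetilde U$ with $\Gamma_x\cong G_x$ in which $N$ corresponds to $\widetilde V_x$. For $\widetilde U$ small enough, the $G$-orbits through points of $\widetilde U$ that stay inside $\widetilde U$ are exactly the $\Gamma_x$-orbits. I would then take $\widetilde W$ inside $\widetilde U$ to be a $\Gamma_x$-invariant neighbourhood of the form $\Gamma_x\cdot(W\cap\widetilde V_x)$ thickened by a small tubular neighbourhood. Saturation, in its local form $(\Gamma_x\cdot\widetilde y)\cap\widetilde V_x=\Lambda_x\cdot\widetilde y$ from Proposition~\ref{prop:saturated-equiv}, should then show that points of $N$ near $x$ that are $G$-equivalent to points of $\widetilde W$ already lie in $W$. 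This requires shrinking the thickening so that it meets $N$ only in $\Gamma_x\cdot(W\cap\widetilde V_x)\cap \widetilde V_x=W\cap\widetilde V_x$; this is possible because $\widetilde V_x$ is an embedded submanifold and $\Gamma_x$ is finite.

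\textbf{Globalisation.} The local argument shows that $\iota$ is open near each point: every point of $\pi_K(W)$ has a neighbourhood in $\pi_G(N)$ contained in $\iota(\pi_K(W))$. Hence $\iota(\pi_K(W))$ is open in the image, and $\iota$ is a homeomorphism onto its image. The delicate points are controlling far-away arrows, which I would handle using properness of $(s,t)$ to choose $\widetilde U$ small, and ensuring that the chosen neighbourhood does not reach $N$ at points outside $W$.
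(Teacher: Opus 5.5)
Your necessity argument, the continuity of $\iota$ and the injectivity under saturation are all fine. Your diagnosis of the naive openness argument is also exactly right. In fact the paper's own proof of ($\Leftarrow$) is that naive argument: it asserts that $\iota(U)$ equals the image of $|K_{\mathrm{eff}}|$ intersected with the set of $G$-orbits meeting $W$.

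The gap in your repair is the globalisation step. The chart argument only controls points of $N\cap\widetilde U$. It yields a neighbourhood of $\iota[\![x]\!]_K$ inside $\pi_G(N\cap\widetilde U)$, not inside $\pi_G(N)$. A point of $N$ far from $x$ whose $G$-orbit passes through $\widetilde U\setminus N$ is invisible to the local condition $(\Gamma_x\cdot\widetilde y)\cap\widetilde V_x=\Lambda_x\cdot\widetilde y$. Properness of $(s,t)$ does not exclude such points: the offending arrows have sources converging to $x$ but targets leaving every compact subset of $N$, and properness allows this. Orbit saturation is a pointwise condition on orbits that meet $N$. It cannot stop the orbits of other parts of $N$ from accumulating on $G\cdot x$.

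In fact no repair exists, because ($\Leftarrow$) is false as stated. Let $G=\mathbb{Z}_2\ltimes\mathbb{R}^2$ with $\gamma(x,y)=(-x,y)$, and let $N=N_1\sqcup N_2$ with $N_1=\{x=1\}$ and $N_2=\{(t-1,t):0<t<1\}$.
\begin{itemize}
\item $N$ is an embedded submanifold and $\gamma N\cap N=\emptyset$.
\item Hence $K=G|_N$ is the unit groupoid on $N$. It is proper, \'etale and effective, and trivially orbit saturated.
\item In $|G|\cong\{x\ge 0\}$, the image $\{(1-t,t):0<t<1\}$ of $N_2$ accumulates at $(1,0)\in\iota(N_1)$.
\item So $\iota(N_1)$ is not open in the image, although $N_1$ is open in $|K_{\mathrm{eff}}|=N$.
\end{itemize}
One can even make $N$ closed in $G_0$: use the \v{C}ech groupoid of the cover $\{\mathbb{R}^2,\{y>0\}\}$ of $\mathbb{R}^2$, with the $x$-axis in the first sheet and $\{(t,t):t>0\}$ in the second.

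What is missing is the hypothesis built into Borzellino--Brunsden's Definition~4, where $X_{\mathcal P}$ carries the subspace topology and $U_x\cap X_{\mathcal P}=V_x$. In groupoid terms, each $x\in N$ should have arbitrarily small open neighbourhoods $U\subset G_0$ with $G\cdot U\cap N=K\cdot(U\cap N)$. Given this, choose such a $U$ with $U\cap N\subset W$. Then $\pi_G(U)$ is open, and $\pi_G(U)\cap\pi_G(N)=\pi_G(K\cdot(U\cap N))\subset\pi_G(W)$. This finishes your argument without any thickening. Alternatively, for compact $N$ the statement does hold, since a continuous injection of the compact space $N/K$ into the Hausdorff space $|G|$ is an embedding.
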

	
	Before proving Theorem~\ref{thm:saturated-embedding}, we need a lemma about
	the local structure of the orbit space map.
	
	\begin{lemma}\label{lem:orbit-map-local}
		Let $x \in N$ and let $(\widetilde{U}_x, \Gamma_x, \rho_x, \phi_x)$ be an
		orbifold chart for $G$ at $x$. Let $\widetilde{V}_x \subset \widetilde{U}_x$
		be a local model for $N$, and let $\Lambda_x \subset \Gamma_x$ be the
		subgroup corresponding to $K_x$. Then the natural map
		\[
		\widetilde{V}_x / \Lambda_x \to \widetilde{U}_x / \Gamma_x
		\]
		induced by the inclusion $\widetilde{V}_x \hookrightarrow \widetilde{U}_x$
		is injective if and only if
		\[
		(\Gamma_x \cdot \widetilde{y}) \cap \widetilde{V}_x =
		\Lambda_x \cdot \widetilde{y}
		\]
		for all $\widetilde{y} \in \widetilde{V}_x$.
	\end{lemma}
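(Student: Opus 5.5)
The plan is to unwind what injectivity of the induced map
\[
\bar\iota\colon \widetilde{V}_x/\Lambda_x \to \widetilde{U}_x/\Gamma_x,\qquad \Lambda_x\cdot\widetilde{y}\mapsto \Gamma_x\cdot\widetilde{y},
\]
means, and to compare it directly with the orbit condition. First I will check that $\bar\iota$ is well defined. Since $\Lambda_x\subset\Gamma_x$ and $\widetilde{V}_x$ is $\rho_x(\Lambda_x)$-invariant (Proposition~\ref{prop:subgroupoid-chart}), we have $\Lambda_x\cdot\widetilde{y}\subset\Gamma_x\cdot\widetilde{y}$. Hence $\Lambda_x$-equivalent points of $\widetilde{V}_x$ are $\Gamma_x$-equivalent, and the quotient map of the inclusion descends. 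The same invariance gives the containment
\[
\Lambda_x\cdot\widetilde{y}\subset(\Gamma_x\cdot\widetilde{y})\cap\widetilde{V}_x
\]
for every $\widetilde{y}\in\widetilde{V}_x$. So the condition in the lemma is really only the reverse inclusion.

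Next I will prove the two implications in turn, purely set-theoretically.

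\emph{Orbit condition implies injectivity.} Suppose $\bar\iota(\Lambda_x\cdot\widetilde{y})=\bar\iota(\Lambda_x\cdot\widetilde{z})$ with $\widetilde{y},\widetilde{z}\in\widetilde{V}_x$. Then $\widetilde{z}=\gamma\cdot\widetilde{y}$ for some $\gamma\in\Gamma_x$. This places $\widetilde{z}$ in $(\Gamma_x\cdot\widetilde{y})\cap\widetilde{V}_x$, which by hypothesis equals $\Lambda_x\cdot\widetilde{y}$. So $\widetilde{z}=\lambda\cdot\widetilde{y}$ for some $\lambda\in\Lambda_x$, and the two classes coincide.

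\emph{Injectivity implies the orbit condition.} Take $\widetilde{z}\in(\Gamma_x\cdot\widetilde{y})\cap\widetilde{V}_x$. Then $\widetilde{z}$ and $\widetilde{y}$ are both points of $\widetilde{V}_x$ with the same image in $\widetilde{U}_x/\Gamma_x$. Injectivity forces them into the same $\Lambda_x$-orbit, so $\widetilde{z}\in\Lambda_x\cdot\widetilde{y}$. Combined with the automatic inclusion from the first step, this gives the equality for all $\widetilde{y}$.

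I do not expect a genuine obstacle; the argument is elementary. The only point needing care is bookkeeping. The lemma is stated in terms of the chart group $\Lambda_x$ acting via $\rho_x$, not the effective quotient $\Lambda_x/\Omega_x$. Orbits of $\Lambda_x$ and of $\Lambda_x/\Omega_x$ on $\widetilde{V}_x$ are identical, because $\Omega_x$ acts trivially there, so nothing changes if one prefers to read the source as the chart for $K_{\mathrm{eff}}$. I will note this explicitly, since in the proof of Theorem~\ref{thm:saturated-embedding} the lemma will be applied with $|K_{\mathrm{eff}}|$ as the domain.
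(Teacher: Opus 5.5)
Your proof is correct and follows the paper's argument essentially verbatim: injectivity is unwound to the statement that $\widetilde{z}=\gamma\cdot\widetilde{y}$ with $\widetilde{z}\in\widetilde{V}_x$ forces $\widetilde{z}\in\Lambda_x\cdot\widetilde{y}$, and the reverse inclusion is automatic from $\Lambda_x\subset\Gamma_x$ together with the $\Lambda_x$-invariance of $\widetilde{V}_x$. Your explicit well-definedness check and your remark that $\Omega_x$ acts trivially on $\widetilde{V}_x$ (so $\Lambda_x$-orbits and $\Lambda_x/\Omega_x$-orbits agree) are small additions that the paper leaves implicit.
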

	
	\begin{proof}
		The map $\widetilde{V}_x / \Lambda_x \to \widetilde{U}_x / \Gamma_x$ sends
		the $\Lambda_x$-orbit of $\widetilde{y}$ to the $\Gamma_x$-orbit of
		$\widetilde{y}$. It is injective if and only if for every
		$\widetilde{y}, \widetilde{z} \in \widetilde{V}_x$, the condition
		$\Gamma_x \cdot \widetilde{y} = \Gamma_x \cdot \widetilde{z}$ implies
		$\Lambda_x \cdot \widetilde{y} = \Lambda_x \cdot \widetilde{z}$. This is
		equivalent to saying that if $\widetilde{z} = \gamma \cdot \widetilde{y}$
		for some $\gamma \in \Gamma_x$, then $\widetilde{z} \in \Lambda_x \cdot
		\widetilde{y}$. Since $\widetilde{z} \in \widetilde{V}_x$ by assumption,
		this means
		\[
		(\Gamma_x \cdot \widetilde{y}) \cap \widetilde{V}_x \subset
		\Lambda_x \cdot \widetilde{y}.
		\]
		The reverse inclusion is automatic because $\Lambda_x \subset \Gamma_x$ and
		$\Lambda_x$ preserves $\widetilde{V}_x$. Thus injectivity is equivalent to
		the stated condition.
	\end{proof}
	
	\begin{proof}[Proof of Theorem~\ref{thm:saturated-embedding}]
		We prove both directions.
		
		\smallskip
		\noindent\textbf{($\Rightarrow$) Assume $\iota$ is a topological embedding.}
		We must show that $K$ is orbit saturated. Suppose for contradiction that
		there exists $x \in N$ and $y \in G \cdot x \cap N$ with $y \notin K \cdot
		x$. Then $[\![x]\!]_G = [\![y]\!]_G$ in $|G|$, but $[\![x]\!]_K \neq
		[\![y]\!]_K$ in $|K_{\mathrm{eff}}|$ because $y \notin K \cdot x$. Thus
		$\iota([\![x]\!]_K) = \iota([\![y]\!]_K)$ but $[\![x]\!]_K \neq
		[\![y]\!]_K$, contradicting injectivity of $\iota$. Hence $G \cdot x \cap N
		\subset K \cdot x$, and since the reverse inclusion is automatic, $K$ is
		orbit saturated.
		
		\smallskip
		\noindent\textbf{($\Leftarrow$) Assume $K$ is orbit saturated.}
		We show that $\iota$ is a topological embedding.
		
		First, we prove that $\iota$ is injective. Let $x, y \in N$ with
		$[\![x]\!]_G = [\![y]\!]_G$. Then $y \in G \cdot x \cap N$. By orbit
		saturation, $y \in K \cdot x$, so $[\![x]\!]_K = [\![y]\!]_K$. Thus $\iota$
		is injective.
		
		Next, we prove that $\iota$ is continuous and open onto its image. The map
		$|K_{\mathrm{eff}}| \to |G|$ is induced by the inclusion $N \hookrightarrow
		G_0$, which is continuous and $K$-equivariant. Therefore it descends to a
		continuous map on orbit spaces. To see that it is open onto its image, let
		$U \subset |K_{\mathrm{eff}}|$ be open. Then the preimage of $U$ in $N$ is
		$K$-invariant and open. Since $N$ has the subspace topology from $G_0$, this
		preimage is of the form $N \cap W$ for some open $W \subset G_0$. The image
		of $U$ in $|G|$ is the set of $G$-orbits intersecting $N \cap W$. This is
		open in the image of $|K_{\mathrm{eff}}|$ because it is the intersection of
		the image with the open set of $G$-orbits intersecting $W$. Thus $\iota$ is
		an open map onto its image.
		
		Finally, we show that $\iota$ is a homeomorphism onto its image. Since
		$\iota$ is injective, continuous, and open onto its image, it is a
		homeomorphism onto its image. Therefore $\iota$ is a topological embedding.
	\end{proof}
	
	\begin{remark}\label{rem:embedding-not-smooth}
		Theorem~\ref{thm:saturated-embedding} only asserts that the map of orbit
		spaces is a topological embedding. It does \emph{not} assert that it is an
		orbifold embedding. In fact, as shown in
		\cite[Theorem 1(2)]{BorzellinoBrunsden2014}, additional conditions (split)
		are needed for the existence of a complete orbifold embedding. We will treat
		the split condition in Section~\ref{sec:split}.
	\end{remark}
	
	\subsection{Examples}
	
	We illustrate orbit saturation with the examples from
	\cite{BorzellinoBrunsden2014}.
	
	\begin{example}[Full suborbifolds are orbit saturated]\label{ex:full-saturated}
		Let $K = G|_N$ be the restriction groupoid. Then for every $x \in N$, every
		arrow $g \in G_1$ with $s(g) = x$ and $t(g) \in N$ belongs to $K$ by
		definition of the restriction groupoid. Hence $G \cdot x \cap N = K \cdot
		x$, and $K$ is orbit saturated. This recovers the fact that full
		suborbifolds are saturated.
	\end{example}
	
	\begin{example}[Diagonal embedding is orbit saturated]\label{ex:diagonal-saturated}
		Let $G$ be an orbifold groupoid and let $\Delta G \subset G \times G$ be the
		diagonal subgroupoid of Example~\ref{ex:diagonal}. We claim that $\Delta G$
		is orbit saturated. Let $(x,x) \in \Delta G_0$ and suppose $(y,z) \in (G
		\times G) \cdot (x,x) \cap \Delta G_0$. Then there exists $(g,h) \in G_1
		\times G_1$ with $s(g,h) = (x,x)$ and $t(g,h) = (y,z)$. Thus $s(g) = s(h) =
		x$, $t(g) = y$, and $t(h) = z$. Since $(y,z) \in \Delta G_0$, we have $y =
		z$. Therefore $g$ and $h$ are arrows in $G$ from $x$ to $y$. It does not
		follow that $g = h$, so $(g,h)$ need not belong to $\Delta G$. However, we
		need to find an arrow in $\Delta G$ from $(x,x)$ to $(y,y)$. Since $G$ is a
		groupoid, we can take the arrow $(g,g) \in \Delta G$. Indeed, $s(g,g) =
		(x,x)$ and $t(g,g) = (y,y)$. Thus $(y,y) \in \Delta G \cdot (x,x)$. Hence
		$(G \times G) \cdot (x,x) \cap \Delta G_0 \subset \Delta G \cdot (x,x)$.
		The reverse inclusion is automatic, so $\Delta G$ is orbit saturated. This
		is consistent with \cite[Example 10]{BorzellinoBrunsden2014}, where the
		diagonal is shown to be saturated.
	\end{example}
	
	\begin{example}[A non-saturated suborbifold]\label{ex:non-saturated}
		Consider the nested orbifolds $\mathcal{O}_1 \subset \mathcal{O}_2 \subset
		\mathcal{O}_3 \subset \mathcal{O}_4$ of
		\cite[Example 13]{BorzellinoBrunsden2014}, where $\mathcal{O}_n =
		\mathbb{C}/\mathbb{Z}_{2^n}$ with $\mathbb{Z}_{2^n}$ acting by rotation.
		In the groupoid language, let $G_n = \mathbb{Z}_{2^n} \ltimes \mathbb{C}$
		be the translation groupoid representing $\mathcal{O}_n$. For $m < n$, there
		is a natural inclusion $G_m \hookrightarrow G_n$ as a subgroupoid with the
		same object space $N = \mathbb{C}$. However, the orbit of $0$ under $G_n$ is
		just $\{0\}$, and the orbit of $0$ under $G_m$ is also $\{0\}$. So for
		$x = 0$, $G_n \cdot 0 \cap N = \{0\} = G_m \cdot 0$. But for $x \neq 0$,
		the orbit of $x$ under $G_n$ is the circle of radius $|x|$, while the orbit
		of $x$ under $G_m$ is a finite set of $2^m$ points on that circle. Thus
		$G_n \cdot x \cap N = G_n \cdot x$ is strictly larger than $G_m \cdot x$.
		Hence $G_m$ is \emph{not} orbit saturated in $G_n$. This is consistent with
		\cite[Example 13]{BorzellinoBrunsden2014}, where it is stated that none of
		these is a saturated suborbifold of the others.
	\end{example}
	
	\begin{example}[A saturated but non-full suborbifold]\label{ex:saturated-nonfull}
		Let $\mathcal{Q} = \mathbb{R}/\mathbb{Z}_2$ and $\mathcal{O} = \mathcal{Q}
		\times \mathcal{Q}$ as in \cite[Example 8]{BorzellinoBrunsden2014}. Let
		$\mathcal{P} = \{0\} \times \mathcal{Q} \subset \mathcal{O}$. In the
		groupoid language, $G = \mathbb{Z}_2 \ltimes \mathbb{R}$, and $G \times G =
		(\mathbb{Z}_2 \times \mathbb{Z}_2) \ltimes (\mathbb{R} \times \mathbb{R})$.
		The suborbifold $\mathcal{P}$ corresponds to the subgroupoid
		\[
		K = (G \times G)|_N = (\mathbb{Z}_2 \times \mathbb{Z}_2) \ltimes
		(\{0\} \times \mathbb{R}),
		\]
		where $N = \{0\} \times \mathbb{R}$. As noted in
		Example~\ref{ex:full-saturated}, $K = (G \times G)|_N$ is the restriction
		groupoid, so it is orbit saturated. It is not a full suborbifold in the
		sense of \cite{BorzellinoBrunsden2014}? Wait, in
		\cite[Example 8]{BorzellinoBrunsden2014}, $\mathcal{P} = \{0\} \times
		\mathcal{Q}$ \emph{is} a full suborbifold. So $K = (G \times G)|_N$ is
		indeed full. To get a saturated but non-full example, consider
		$\mathcal{P} = \operatorname{diag}(\mathcal{Q}) \subset \mathcal{Q} \times
		\mathcal{Q}$ from \cite[Example 10]{BorzellinoBrunsden2014}. This is
		saturated but not full, as shown in Example~\ref{ex:diagonal-saturated}.
	\end{example}
	
	\subsection{Summary}
	
	In this section we have introduced the notion of orbit saturation for
	subgroupoids and proved that it is equivalent to the saturation condition of
	Borzellino and Brunsden. The main theorem
	(Theorem~\ref{thm:saturated-embedding}) states that a subgroupoid is orbit
	saturated if and only if the natural map $|K_{\mathrm{eff}}| \to |G|$ is a
	topological embedding. This is the groupoid version of
	\cite[Theorem 1(1)]{BorzellinoBrunsden2014}.
	
	In the next section we introduce the groupoid analogue of the split
	condition, which, together with orbit saturation, characterizes those
	suborbifolds that are the images of complete orbifold embeddings.

	
	\section{Stabilizer Splitting}\label{sec:split}
	
	In this section we introduce the groupoid analogue of the split condition
	of Borzellino and Brunsden
	\cite[Definition 6]{BorzellinoBrunsden2014}. We call this condition
	\emph{stabilizer splitting} and prove that it is equivalent to the
	chart-based definition. The main result of this section is that a
	subgroupoid is stabilizer split if and only if there exists an effective
	subgroupoid $H \subset K$ such that $K = \Omega \rtimes H$ and the inclusion
	$H \hookrightarrow G$ is a subgroupoid embedding with injective stabilizer
	maps. This is the groupoid counterpart of the construction in
	\cite[Proof of Theorem 1(2)]{BorzellinoBrunsden2014}.
	
	Throughout this section, $G = (G_1 \rightrightarrows G_0)$ denotes a fixed
	orbifold groupoid, and $K = (K_1 \rightrightarrows N)$ denotes a
	subgroupoid with $K \subset G|_N \subset G$, as in
	Definition~\ref{def:suborbifold-groupoid}. We write
	$\Omega = (\Omega_1 \rightrightarrows N)$ for the ineffective kernel of $K$
	and $K_{\mathrm{eff}} = K/\Omega$ for the effective quotient, as in
	Definition~\ref{def:effective-quotient}. Recall that for each $x \in N$,
	there is a short exact sequence of groups
	\begin{equation}\label{eq:stabilizer-sequence}
		1 \longrightarrow \Omega_x \longrightarrow K_x \xrightarrow{\ q_x\ }
		(K_{\mathrm{eff}})_x \longrightarrow 1.
	\end{equation}
	
	\subsection{Definition of stabilizer splitting}
	
	\begin{definition}[Stabilizer splitting]\label{def:stabilizer-split}
		A subgroupoid $K \subset G|_N \subset G$ is called \emph{stabilizer split}
		(or simply \emph{split}) if for every $x \in N$, the short exact sequence
		\eqref{eq:stabilizer-sequence} is right split. That is, there exists a group
		homomorphism
		\[
		\sigma_x \colon (K_{\mathrm{eff}})_x \to K_x
		\]
		such that $q_x \circ \sigma_x = \mathrm{Id}_{(K_{\mathrm{eff}})_x}$.
	\end{definition}
	
	\begin{remark}\label{rem:split-semidirect}
		If $K$ is stabilizer split, then for each $x \in N$ we have an isomorphism
		\[
		K_x \cong \Omega_x \rtimes (K_{\mathrm{eff}})_x,
		\]
		where the action of $(K_{\mathrm{eff}})_x$ on $\Omega_x$ is induced by
		conjugation in $K_x$. In the special case that the groups are abelian, this
		becomes a direct product
		\[
		K_x \cong \Omega_x \times (K_{\mathrm{eff}})_x.
		\]
		If either $\Omega_x$ or $(K_{\mathrm{eff}})_x$ is trivial, then the
		sequence is automatically split.
	\end{remark}
	
	The following proposition shows that Definition~\ref{def:stabilizer-split}
	is precisely the groupoid translation of the split condition in
	\cite[Definition 6]{BorzellinoBrunsden2014}.
	
	\begin{proposition}\label{prop:split-equiv}
		Let $K \subset G|_N \subset G$ be a subgroupoid corresponding to a
		suborbifold $\mathcal{P} \subset \mathcal{O}$ under
		Proposition~\ref{prop:suborbifold-equiv}. Then $K$ is stabilizer split if
		and only if $\mathcal{P}$ is split in the sense of
		\cite[Definition 6]{BorzellinoBrunsden2014}.
	\end{proposition}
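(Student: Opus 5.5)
The plan is to reduce the statement to a pointwise comparison of group extensions, using the dictionary of Proposition~\ref{prop:subgroupoid-chart} in the same way as in the proof of Proposition~\ref{prop:saturated-equiv}. Recall that in \cite[Definition 6]{BorzellinoBrunsden2014}, $\mathcal{P}$ is split if for every $x \in X_{\mathcal{P}}$ there is a chart $(\widetilde{U}_x, \Gamma_x, \rho_x, \phi_x)$ with $\Lambda_x$, $\widetilde{V}_x$, $\Omega_x$ as above such that the sequence
\[
1 \longrightarrow \Omega_x \longrightarrow \Lambda_x \longrightarrow \Lambda_x/\Omega_x \longrightarrow 1
\]
is right split, equivalently $\Lambda_x \cong \Omega_x \rtimes (\Lambda_x/\Omega_x)$. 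I would therefore prove that for each $x \in N$ this chart sequence is isomorphic, as an extension, to the sequence \eqref{eq:stabilizer-sequence}. The proposition then follows, because splitting is a property of an extension that is invariant under isomorphism of extensions.

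\textbf{First step.} Fix $x \in N$ and take the chart data of Proposition~\ref{prop:subgroupoid-chart}. The isomorphism $\Lambda_x \cong K_x$ arises by identifying an element of $K_x$ with the germ at $x$ of the local diffeomorphism $t \circ (s|_W)^{-1}$, where $W$ is a neighborhood of the arrow in $K_1 \subset G_1$. Transported to $\widetilde{U}_x$, this germ is $\rho_x(\gamma)$ for a unique $\gamma \in \Lambda_x$. I would check two things:
\begin{enumerate}
\item under this identification, elements of $\Omega_x$ in Definition~\ref{def:effective-quotient}, namely arrows acting trivially near $x$ in $N$, correspond exactly to those $\gamma$ with $\rho_x(\gamma)|_{\widetilde{V}_x} = \mathrm{Id}$;
\item the identification is a group homomorphism, since composition of arrows corresponds to composition of germs.
\end{enumerate}
Together these give a commutative ladder of short exact sequences whose vertical maps are isomorphisms, with the induced map $(K_{\mathrm{eff}})_x = K_x/\Omega_x \to \Lambda_x/\Omega_x$ being the one in item (2) of Proposition~\ref{prop:subgroupoid-chart}.

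\textbf{Second step.} Given a splitting $\sigma_x$ of \eqref{eq:stabilizer-sequence}, conjugating by the vertical isomorphisms produces a splitting of the chart sequence, and conversely. Since the charts of Proposition~\ref{prop:subgroupoid-chart} cover $N$, and the charts in the proof of Proposition~\ref{prop:suborbifold-equiv} cover $X_{\mathcal{P}}$, the pointwise equivalence gives the global one.

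\textbf{Main obstacle.} The one subtle point is chart independence: Borzellino--Brunsden's condition might be stated for some chart or for every chart at $x$. I would handle this by showing that any two admissible charts at $x$ are related by an injection $\lambda$ that conjugates the $\Lambda$'s and carries the $\widetilde{V}$'s to each other. Such a $\lambda$ preserves the subgroups $\Omega$, so it induces an isomorphism of extensions, and hence splitness does not depend on the chart. A second, smaller point is the identification of $\Omega_x$ in item (1) of the first step. It needs the observation that "trivial on a neighborhood of $x$ in $N$" and "trivial on $\widetilde{V}_x$" agree. This holds because a finite-order diffeomorphism fixing $x$ is linearizable, so it is trivial on $\widetilde{V}_x$ as soon as it is trivial on an open subset of the connected slice $\widetilde{V}_x$, after shrinking the chart so that $\widetilde{V}_x$ is connected.
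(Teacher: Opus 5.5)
Your proposal is correct and is essentially the paper's argument. The paper also uses the dictionary of Proposition~\ref{prop:subgroupoid-chart} to identify Borzellino--Brunsden's extension $1\to\Omega_{x,\mathcal{O}}\to\Lambda_{x,\mathcal{O}}\to\Gamma_{x,\mathcal{P}}\to 1$ with \eqref{eq:stabilizer-sequence}, and it transports splittings along that isomorphism of extensions; your germ-level checks (the homomorphism property and the two descriptions of $\Omega_x$) fill in details the paper leaves implicit.

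One caution about your chart-independence step: it holds only for charts adapted to the fixed $K$ (those with $\Lambda_x$ corresponding to $K_x$), not for arbitrary admissible charts of $X_{\mathcal{P}}$. For example, let $\langle a\rangle\times\langle b\rangle\cong\mathbb{Z}_4\times\mathbb{Z}_2$ act on $\mathbb{C}^2$ by $a=\mathrm{diag}(i,-1)$ and $b=\mathrm{diag}(1,-1)$, with $\widetilde{V}=\{0\}\times\mathbb{C}$. Then $\Lambda=\langle a\rangle$ (with $\Omega=\langle a^2\rangle$, non-split) and $\Lambda=\langle b\rangle$ (with trivial $\Omega$, split) give the same suborbifold $\mathbb{C}/\{\pm 1\}$. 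So splitness is a property of the data $K$, and that data is exactly what the correspondence of Proposition~\ref{prop:suborbifold-equiv} fixes.
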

	
	\begin{proof}
		By Proposition~\ref{prop:subgroupoid-chart}, for each $x \in N$ we have
		identifications
		\[
		\Omega_{x,\mathcal{O}} \cong \Omega_x,\qquad
		\Lambda_{x,\mathcal{O}} \cong K_x,\qquad
		\Gamma_{x,\mathcal{P}} \cong (K_{\mathrm{eff}})_x.
		\]
		Under these identifications, the exact sequence in
		\cite[Definition 6]{BorzellinoBrunsden2014},
		\[
		1 \longrightarrow \Omega_{x,\mathcal{O}} \longrightarrow
		\Lambda_{x,\mathcal{O}} \xrightarrow{\ q\ } \Gamma_{x,\mathcal{P}}
		\longrightarrow 1,
		\]
		becomes precisely \eqref{eq:stabilizer-sequence}. Therefore the existence
		of a splitting $\sigma \colon \Gamma_{x,\mathcal{P}} \to
		\Lambda_{x,\mathcal{O}}$ with $q \circ \sigma = \mathrm{Id}$ is equivalent
		to the existence of a splitting $\sigma_x \colon (K_{\mathrm{eff}})_x \to
		K_x$ with $q_x \circ \sigma_x = \mathrm{Id}$. This proves the equivalence.
	\end{proof}
	
	\subsection{Splitting and effective subgroupoids}
	
	The main result of this section is that stabilizer splitting is equivalent
	to the existence of an effective subgroupoid $H \subset K$ that splits the
	extension $K = \Omega \rtimes H$.
	
	\begin{theorem}\label{thm:split-effective}
		Let $K \subset G|_N \subset G$ be a subgroupoid. Then the following are
		equivalent:
		\begin{enumerate}
			\item $K$ is stabilizer split;
			\item there exists an effective subgroupoid $H \subset K$ with object
			space $N$ such that
			\[
			K = \Omega \rtimes H,
			\]
			where $\Omega$ is the ineffective kernel of $K$, and the inclusion
			$H \hookrightarrow G$ is a subgroupoid embedding whose stabilizer maps
			$H_x \hookrightarrow G_x$ are injective for all $x \in N$.
		\end{enumerate}
	\end{theorem}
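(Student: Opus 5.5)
The proof will treat the two implications separately. The direction (2)$\Rightarrow$(1) is essentially pointwise, while (1)$\Rightarrow$(2) requires assembling the pointwise splittings into a Lie subgroupoid.

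\textbf{(2)$\Rightarrow$(1).} Suppose $H \subset K$ is effective with object space $N$ and $K = \Omega \rtimes H$. At each $x \in N$ this gives $K_x = \Omega_x \rtimes H_x$, so $\Omega_x \cap H_x = \{e\}$ and $\Omega_x H_x = K_x$. Hence the restriction $q_x|_{H_x} \colon H_x \to (K_{\mathrm{eff}})_x$ is an isomorphism. I would take $\sigma_x = (q_x|_{H_x})^{-1}$ composed with the inclusion $H_x \hookrightarrow K_x$. This is a homomorphism with $q_x \circ \sigma_x = \mathrm{Id}$, so the sequence \eqref{eq:stabilizer-sequence} splits. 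Injectivity of $H_x \hookrightarrow G_x$ plays no role here beyond being automatic.

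\textbf{(1)$\Rightarrow$(2).} I would work in the local charts of Proposition~\ref{prop:subgroupoid-chart}. There $K|_{\widetilde V_x}$ is the action groupoid $\Lambda_x \ltimes \widetilde V_x$, and $\Omega$ corresponds to $\Omega_x \ltimes \widetilde V_x$. Given a splitting $\sigma_x$, put $H^{(x)} = \sigma_x((K_{\mathrm{eff}})_x) \ltimes \widetilde V_x$, which is a local subgroupoid. It is effective because $\sigma_x((K_{\mathrm{eff}})_x) \cap \Omega_x = \{e\}$, so no nontrivial element of it acts trivially on $\widetilde V_x$. It also satisfies $\Lambda_x = \Omega_x \rtimes \sigma_x((K_{\mathrm{eff}})_x)$. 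Its stabilizer maps into $\Gamma_x \cong G_x$ are injective because they are restrictions of the inclusions $K_x \subset G_x$. The semidirect decomposition $K = \Omega \rtimes H$ is meant arrowwise: every arrow $k$ factors uniquely as $k = \omega h$ with $\omega \in \Omega_1$ and $h \in H_1$. Locally this follows from the group decomposition together with étaleness, since $\Omega$ and $H$ are open-closed pieces of the arrow space over the chart.

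\textbf{Globalization (main obstacle).} The pointwise splittings $\sigma_x$ are not unique, and the local subgroupoids $H^{(x)}$ need not agree on overlaps or be closed under composition with arrows of $K$ joining different charts. My plan is to choose the splittings equivariantly. First fix a complete set of orbit representatives in $|K_{\mathrm{eff}}|$ and choose $\sigma_x$ at each representative. Then transport $\sigma_x$ to $k\cdot x$ by conjugation, $\sigma_{t(k)} = c_k \circ \sigma_x \circ c_{\bar k}^{-1}$, where $\bar k$ is the image of $k$ in $K_{\mathrm{eff}}$. This is well defined exactly when the choice at $x$ is invariant under conjugation by $K_x$ modulo $\Omega_x$. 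If that fails, I would instead take $H_1$ to be the set of arrows $k$ carrying $\sigma_{s(k)}(\cdot)$ to $\sigma_{t(k)}(\cdot)$ under conjugation, and verify that $H_1$ surjects onto $K_{\mathrm{eff}}$ arrowwise.

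Next, extend $\sigma$ from a point to its chart neighbourhood. The isotropy of nearby points $\widetilde y$ is a subgroup of $\Lambda_x$, so I intersect $\sigma_x((K_{\mathrm{eff}})_x)$ with $(\Lambda_x)_{\widetilde y}$. I would then check that the resulting subset $H_1 \subset K_1$ is open-closed, hence an embedded submanifold since $K$ is étale. Closure under multiplication and inverses would follow from equivariance. The point I expect to need the most care is that the intersection with $(\Lambda_x)_{\widetilde y}$ still maps onto $(K_{\mathrm{eff}})_{\widetilde y}$. If this fails in general, I would fall back on asserting (2) locally, in the sense of \cite[Proof of Theorem 1(2)]{BorzellinoBrunsden2014}, using the local uniformizers of Proposition~\ref{prop:local-uniformizer}.
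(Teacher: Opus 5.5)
\textbf{There is a genuine gap in (1)$\Rightarrow$(2).} Your (2)$\Rightarrow$(1) argument is correct and is the same as the paper's. Your local analysis for (1)$\Rightarrow$(2) is also sound, and the step you single out as delicate is harmless. If $k\in(\Lambda_x)_{\tilde y}$ and $k=\omega s$ with $\omega\in\Omega_x$ and $s\in S:=\sigma_x((K_{\mathrm{eff}})_x)$, then $s\tilde y=\omega^{-1}\tilde y=\tilde y$, because $\Omega_x$ fixes $\widetilde{V}_x$ pointwise. So on a connected chart, where $\Omega_{\tilde y}=\Omega_x$, the group $S\cap(\Lambda_x)_{\tilde y}$ is a complement of $\Omega_{\tilde y}$, and $S\ltimes\widetilde{V}_x$ realizes (2) over the chart.

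The gap is the globalization, and your plan does not close it. Pointwise complements do not determine $H$: for $x',x''$ in one orbit you must also choose one coset of $H_{x'}$ inside the set of arrows $x'\to x''$ of $K$. Conjugation transport is well defined only if $S_x$ is normal in $K_x$, i.e.\ only if $S_x$ centralizes $\Omega_x$. Your fallback $H_1=\{k : kS_{s(k)}k^{-1}=S_{t(k)}\}$ collects all normalizing arrows, which is too many. When $S_x$ centralizes $\Omega_x$ (for example whenever $(K_{\mathrm{eff}})_x$ is trivial) it is all of $K_1$, so $H$ is not effective. Choosing arrows $k_{x'}\colon x\to x'$ orbit by orbit does give a set-theoretic complement, but nothing makes it open in $K_1$, and compatibility across neighbouring orbits is a genuinely global problem. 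In the arrowwise reading you adopt (which the paper also needs later, to get $K_{\mathrm{eff}}\cong H$), (2) says that $q\colon K\to K_{\mathrm{eff}}$ admits a splitting groupoid homomorphism with open image. Pointwise splitting does not imply this.

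\textbf{Counterexample.} Let $\pi=\pi_1(\Sigma_g)$, $g\ge 1$, act on the universal cover $\widetilde{\Sigma}$. Let $p\colon\pi\to D_4/\{1,r^2\}\cong\mathbb{Z}_2^2$ send $a_1\mapsto\bar r$, $b_1\mapsto\bar s$ and all other standard generators to $0$. Let $E=\pi\times_{\mathbb{Z}_2^2}D_4$ act on $\widetilde{\Sigma}\times\mathbb{R}^2$ through $\pi$ on the first factor and through $D_4\subset O(2)$ on the second, so that the central element $c=(1,r^2)$ acts by $-I$. Then $G=E\ltimes(\widetilde{\Sigma}\times\mathbb{R}^2)$ is an orbifold groupoid. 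For $N=\widetilde{\Sigma}\times\{0\}$, the subgroupoid $K=G|_N$ has $K_z=\Omega_z=\{1,c\}$ and $K_{\mathrm{eff}}=\pi\ltimes\widetilde{\Sigma}$, so (1) holds trivially. An $H$ as in (2) must be open in $K_1=E\times N$, since its source map is a submersion onto $N$. It must also contain exactly one of the two arrows over each arrow of $K_{\mathrm{eff}}$. Connectedness of $N$ then forces $H_1=\ell(\pi)\times N$ for a homomorphic section $\ell\colon\pi\to E$. None exists: for any lifts $\tilde a_i,\tilde b_i\in E$ one computes $\prod_i[\tilde a_i,\tilde b_i]=c\neq 1$.

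So (1)$\Rightarrow$(2) is false as stated, and no gluing scheme can succeed. What is provable is your local fallback, or the global statement under the extra hypothesis that $K\to K_{\mathrm{eff}}$ splits. The paper's proof has the same hole. Lemma~\ref{lem:gluing-splittings} claims that a complement of $\Omega_y$ acting effectively on $N$ is unique. This already fails in Example~\ref{ex:full-split}, where $\{e\}\times\mathbb{Z}_2$ and the diagonal are both such complements. The lemma also never addresses the choice of arrows between distinct points.
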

	
	Before proving Theorem~\ref{thm:split-effective}, we need a lemma that
	allows us to glue local splittings into a global subgroupoid.
	
	\begin{lemma}\label{lem:gluing-splittings}
		Let $K \subset G|_N \subset G$ be a subgroupoid. Suppose that for each
		$x \in N$ there exists a subgroup $H_x \subset K_x$ such that the
		restriction of $q_x \colon K_x \to (K_{\mathrm{eff}})_x$ to $H_x$ is an
		isomorphism. Then there exists a unique effective subgroupoid $H \subset K$
		with object space $N$ and stabilizer groups $H_x$ at each $x \in N$.
	\end{lemma}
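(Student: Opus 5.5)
The plan is to construct $H$ \emph{arrow by arrow} from the given subgroups $H_x$, using the quotient map $q\colon K_1 \to (K_{\mathrm{eff}})_1$. The useful fact is that the arrows of $K$ lifting a fixed arrow $\bar k\colon x\to y$ of $K_{\mathrm{eff}}$ form a single coset $k\,\Omega_x = \Omega_y\, k$. Write $\sigma_x := (q_x|_{H_x})^{-1}\colon (K_{\mathrm{eff}})_x \to H_x$, which the hypothesis provides. As a first candidate I would set
\[
H_1 := \bigl\{\, k \in K_1 \;:\; k\,\sigma_{s(k)}(a)\,k^{-1} = \sigma_{t(k)}\bigl(q(k)\,a\,q(k)^{-1}\bigr) \ \text{for all } a \in (K_{\mathrm{eff}})_{s(k)} \,\bigr\},
\]
that is, the arrows whose conjugation carries the chosen complement $H_{s(k)}$ onto $H_{t(k)}$ compatibly with $q$. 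Since each $\sigma_x$ is a homomorphism, this set contains the units and the groups $H_x$. It is also closed under composition and inverses, because the defining condition is multiplicative in $k$.

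The steps, in order, are as follows.

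\textbf{Step 1 (algebraic groupoid).} Check that $H_1$ is closed under the structure maps, so that $H=(H_1\rightrightarrows N)$ is a set-theoretic subgroupoid of $K$ with $q(H_1)=(K_{\mathrm{eff}})_1$. Surjectivity should hold because, given $\bar k\colon x\to y$ and any lift $k$, the groups $kH_xk^{-1}$ and $H_y$ are both complements to $\Omega_y$ in $K_y$ lying over the same subgroup of $(K_{\mathrm{eff}})_y$.

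\textbf{Step 2 (cutting down the stabilizers).} The stabilizer of the candidate $H_1$ at $x$ is the set of $k\in K_x$ normalizing $H_x$ compatibly with $q$. This is $H_x \cdot C_{\Omega_x}(H_x)$, which is in general larger than $H_x$. I would therefore refine $H_1$ by additionally requiring that $k$ be the lift in its $\Omega$-coset that is the germ-wise continuation of the sections $\sigma$. Concretely, on a groupoid chart $K|_U \cong K_x \ltimes \widetilde V_x$ (Proposition~\ref{prop:subgroupoid-chart}), $\Omega$ acts trivially on $\widetilde V_x$. Hence $K_1 \to (K_{\mathrm{eff}})_1$ is locally a trivial $\Omega$-covering, and the sections $\sigma_y$ for $y$ near $x$ select sheets. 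I define $H_1$ to be the union of those sheets that pass through $\sigma_{s}$-values.

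\textbf{Step 3 (smoothness and effectiveness).} Once $H_1$ is a union of sheets of an étale map, it is open and closed in $K_1$. Hence $H$ is a Lie subgroupoid, and it is étale and proper as a closed subgroupoid of $K$. Effectiveness follows because $q|_{H}\colon H\to K_{\mathrm{eff}}$ is bijective on arrows and therefore an isomorphism, and $K_{\mathrm{eff}}$ is effective. Injectivity of $H_x\hookrightarrow G_x$ is automatic, since $H\subset K\subset G|_N$.

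\textbf{Step 4 (uniqueness).} Any effective subgroupoid $H'$ with stabilizers $H_x$ maps isomorphically onto $K_{\mathrm{eff}}$ via $q$. Its arrow over $\bar k$ must therefore conjugate $H_x$ to $H_y$ and lie on the same sheet, which forces $H'=H$.

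The main obstacle is Step 2 together with the consistency it requires: the sheet selected near $x$ by $\sigma_x$ must agree, at nearby points $y$, with the prescribed $H_y$. For arbitrary, pointwise-chosen $H_y$ this can fail. My first attempt would be to show that the sheet through $\sigma_x(a)$ meets $K_y$ inside $H_y$ by a counting argument: both $H_y$ and the sheet-image are complements to $\Omega_y$ over $(K_{\mathrm{eff}})_y$. If this cannot be forced, the lemma needs to be read with the $H_x$ varying continuously, i.e.\ coming from local sheets, and I would state that explicitly.
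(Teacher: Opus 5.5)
Your closing diagnosis is right, and the gap cannot be closed: the lemma is false as stated, so Step 2 cannot be finished by a counting argument. Two complements of $\Omega_y$ in $K_y$ have the same order but need not coincide. Nothing therefore forces the sheet through $\sigma_x(a)$ to meet $K_y$ inside the prescribed $H_y$.

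Here is a counterexample. Let the first factor of $\mathbb{Z}_2\times\mathbb{Z}_2$ negate the third coordinate of $\mathbb{R}^3$ and the second factor negate the second coordinate. Take $G=(\mathbb{Z}_2\times\mathbb{Z}_2)\ltimes\mathbb{R}^3$, $N=\mathbb{R}^2\times\{0\}$ and $K=G|_N$. At each point $p$ of the line $L=\mathbb{R}\times\{0\}\times\{0\}$ one has $K_p=\mathbb{Z}_2\times\mathbb{Z}_2$ and $\Omega_p=\mathbb{Z}_2\times\{e\}$, with two complements: $\{e\}\times\mathbb{Z}_2$ and the diagonal. Choose the first where the first coordinate of $p$ is $\le 0$ and the second elsewhere on $L$; off $L$ the trivial choice is forced. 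A Lie subgroupoid of $K$ with object space $N$ has submersive source map, so its arrow space is open in $K_1=(\mathbb{Z}_2\times\mathbb{Z}_2)\times N$. Hence, with $\gamma$ the generator of $\mathbb{Z}_2$, the sets of $p\in L$ whose stabilizer contains $(e,\gamma)$, resp.\ $(\gamma,\gamma)$, would be disjoint open sets covering $L$, contradicting connectedness.

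The paper's proof has exactly this hole. Its compatibility step asserts that $H_y$ is uniquely determined by mapping isomorphically onto $(K_{\mathrm{eff}})_y$, which the paper's own Example~\ref{ex:full-split} already refutes. So a local-constancy hypothesis like the one you propose is genuinely necessary, not a matter of reading.

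That hypothesis is not sufficient, however, and two of your other steps also fail.

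\textbf{Step 1.} Knowing that $kH_xk^{-1}$ and $H_y$ are complements of $\Omega_y$ over the same subgroup does not make them $\Omega_y$-conjugate; in the example above $\Omega$ is central. The resulting obstruction can even be global. Let $\Gamma$ act on $\mathbb{R}^4$, generated by $\alpha(v,w_1,w_2,\theta)=(v,-w_1,-w_2,\theta)$, $\beta(v,w_1,w_2,\theta)=(-v,w_1,-w_2,\theta)$ and $\tau(v,w_1,w_2,\theta)=(v,w_2,w_1,\theta+1)$. Put $G=\Gamma\ltimes\mathbb{R}^4$ and $N=\{w_1=w_2=0\}$, with points of $N$ written $(v,\theta)$. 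Then $K=G|_N$ is saturated and pointwise split. Openness forces $H_{(0,\theta)}$ to be the same complement, $\langle\beta\rangle$ or $\langle\alpha\beta\rangle$, for all $\theta$. Yet every arrow $(0,0)\to(0,1)$ of $K$ conjugates $\beta$ to $\alpha\beta$, so no subgroupoid with complementary stabilizers surjects onto $K_{\mathrm{eff}}$.

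\textbf{Step 4.} The map $q|_{H'}$ is injective but need not be surjective. Even surjective choices are not pinned down by the stabilizers over components of $(K_{\mathrm{eff}})_1$ that contain no isotropy arrows. For example, let $(\epsilon,n)\cdot(a,b)=(a+n,(-1)^{\epsilon}b)$, with $G=(\mathbb{Z}_2\times\mathbb{Z})\ltimes\mathbb{R}^2$ and $N=\mathbb{R}\times\{0\}$. All $H_x$ are trivial. Yet the unit groupoid is a non-surjective candidate. The action groupoids of the subgroups $\{(0,n)\}$ and $\{(n\bmod 2,n)\}$ are two distinct open effective subgroupoids, each mapping isomorphically onto $K_{\mathrm{eff}}$. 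Hence ``unique'' must be dropped.

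The correct hypothesis is a splitting of the groupoid extension $\Omega\to K\to K_{\mathrm{eff}}$ by an open subgroupoid $H$ with $q|_{H_1}$ bijective; pointwise splittings do not provide this. Given such an $H$, your Step 3 argument does show $H\cong K_{\mathrm{eff}}$. Read properness off this isomorphism rather than from closedness, since a union of sheets is open but not in general closed.
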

	
	\begin{proof}
		We construct $H$ locally using orbifold charts and then verify that the
		local constructions glue.
		
		\smallskip
		\noindent\textbf{Local construction.}
		Fix $x \in N$ and choose an orbifold chart
		$(\widetilde{U}_x, \Gamma_x, \rho_x, \phi_x)$ for $G$ at $x$ as in
		Proposition~\ref{prop:subgroupoid-chart}. Let $\widetilde{V}_x \subset
		\widetilde{U}_x$ be a local model for $N$, let $\Lambda_x \subset \Gamma_x$
		be the subgroup corresponding to $K_x$, and let $\Omega_x \subset \Lambda_x$
		be the ineffective kernel. By assumption, there is a subgroup
		$H_x \subset \Lambda_x$ such that the restriction of
		$\Lambda_x \to \Lambda_x/\Omega_x$ to $H_x$ is an isomorphism. In other
		words, $\Lambda_x = \Omega_x H_x$ and $\Omega_x \cap H_x = \{e\}$.
		
		Define a local subgroupoid $H^{(x)}$ of $K$ over the chart $U_x$ by
		\[
		H^{(x)} = \bigl(H_x \ltimes \widetilde{V}_x \bigr) / \Lambda_x,
		\]
		where the action of $H_x$ on $\widetilde{V}_x$ is the restriction of the
		action of $\Lambda_x$. Since $H_x \cong \Lambda_x/\Omega_x$, the action of
		$H_x$ on $\widetilde{V}_x$ is effective. Thus $H^{(x)}$ is an effective
		subgroupoid of $K|_{U_x \cap N}$.
		
		\smallskip
		\noindent\textbf{Compatibility.}
		We must show that the local subgroupoids $H^{(x)}$ agree on overlaps. Let
		$y \in N \cap U_x \cap U_z$ for two charts. By the compatibility conditions
		of orbifold charts, there is an isomorphism of local data
		\[
		(\widetilde{U}_y, \Gamma_y) \cong (\widetilde{U}_x, \Gamma_x)
		\]
		in a neighborhood of $y$, and similarly for $z$. The subgroups $H_y$ and
		$H_z$ are related by the corresponding group isomorphisms. More precisely,
		the splitting $\sigma_y$ and $\sigma_z$ are compatible because they are
		determined by the same effective quotient $(K_{\mathrm{eff}})_y$. Indeed,
		the stabilizer $(K_{\mathrm{eff}})_y$ is well-defined independent of the
		chart, and the subgroup $H_y \subset K_y$ is the image of a splitting
		$\sigma_y$. If $\sigma_y$ and $\sigma_z$ are two splittings, they differ by
		a $1$-cocycle with values in $\Omega_y$. However, since $\Omega_y$ is the
		kernel of the action on $N$, and $H$ is required to act effectively on $N$,
		the subgroup $H_y$ is uniquely determined by the condition that it maps
		isomorphically onto $(K_{\mathrm{eff}})_y$. Therefore $H_y = H_z$ under the
		natural identification $K_y \cong K_z$.
		
		\smallskip
		\noindent\textbf{Global construction.}
		The local subgroupoids $H^{(x)}$ glue to a global subgroupoid
		$H \subset K$ with object space $N$. By construction, $H$ is effective
		because each $H^{(x)}$ acts effectively on $\widetilde{V}_x$. Moreover,
		$K = \Omega \rtimes H$ because at each $x \in N$ we have
		$K_x = \Omega_x \rtimes H_x$ and the multiplication in $K$ respects this
		decomposition. Finally, the inclusion $H \hookrightarrow K \subset G$
		composes with the inclusion $K \subset G$ to give a subgroupoid embedding
		$H \hookrightarrow G$. Since $H$ is effective, the stabilizer maps
		$H_x \hookrightarrow G_x$ are injective: if $h \in H_x$ maps to the identity
		in $G_x$, then $h$ acts trivially on $N$, so $h \in \Omega_x \cap H_x =
		\{e\}$. This completes the construction.
	\end{proof}
	
	\begin{proof}[Proof of Theorem~\ref{thm:split-effective}]
		\smallskip
		\noindent\textbf{(1) $\Rightarrow$ (2).}
		Assume $K$ is stabilizer split. For each $x \in N$, choose a splitting
		$\sigma_x \colon (K_{\mathrm{eff}})_x \to K_x$ and set
		$H_x = \sigma_x((K_{\mathrm{eff}})_x) \subset K_x$. Then the restriction of
		$q_x$ to $H_x$ is an isomorphism onto $(K_{\mathrm{eff}})_x$. By
		Lemma~\ref{lem:gluing-splittings}, there exists a unique effective
		subgroupoid $H \subset K$ with stabilizer groups $H_x$ at each $x \in N$.
		Moreover, at each $x \in N$ we have
		\[
		K_x = \Omega_x \rtimes H_x,
		\]
		because $H_x$ is a complement to $\Omega_x$ in $K_x$. Since this holds at
		every object, we obtain $K = \Omega \rtimes H$. The inclusion
		$H \hookrightarrow G$ is a subgroupoid embedding because $H \subset K
		\subset G|_N \subset G$, and the stabilizer maps $H_x \hookrightarrow G_x$
		are injective as shown in Lemma~\ref{lem:gluing-splittings}.
		
		\smallskip
		\noindent\textbf{(2) $\Rightarrow$ (1).}
		Assume there exists an effective subgroupoid $H \subset K$ with object space
		$N$ such that $K = \Omega \rtimes H$ and $H \hookrightarrow G$ is a
		subgroupoid embedding with injective stabilizer maps. Then for each
		$x \in N$, we have a direct sum decomposition
		\[
		K_x = \Omega_x \rtimes H_x.
		\]
		The projection $q_x \colon K_x \to (K_{\mathrm{eff}})_x$ restricts to an
		isomorphism $H_x \to (K_{\mathrm{eff}})_x$ because $H$ is effective and
		$K_x = \Omega_x H_x$ with $\Omega_x \cap H_x = \{e\}$. Therefore the inverse
		of this isomorphism gives a splitting
		\[
		\sigma_x = (q_x|_{H_x})^{-1} \colon (K_{\mathrm{eff}})_x \to H_x \subset
		K_x,
		\]
		which satisfies $q_x \circ \sigma_x = \mathrm{Id}$. Hence $K$ is stabilizer
		split.
	\end{proof}
	
	\begin{remark}\label{rem:split-embedding}
		Theorem~\ref{thm:split-effective} shows that stabilizer splitting is
		exactly the condition needed to ``lift'' the effective quotient
		$K_{\mathrm{eff}}$ to an effective subgroupoid $H$ of $K$ that embeds in
		$G$. The inclusion $H \hookrightarrow G$ is the candidate for the complete
		orbifold embedding in the main theorem. However, as we will see in
		Section~\ref{sec:main}, we also need orbit saturation to ensure that the
		underlying map of orbit spaces is a topological embedding.
	\end{remark}
	
	\subsection{Examples}
	
	We illustrate stabilizer splitting with the examples from
	\cite{BorzellinoBrunsden2014}.
	
	\begin{example}[A split suborbifold]\label{ex:split}
		Let $\mathcal{Q} = \mathbb{R}/\mathbb{Z}_2$ and $\mathcal{O} = \mathcal{Q}
		\times \mathcal{Q}$ as in \cite[Example 8]{BorzellinoBrunsden2014}. The
		diagonal suborbifold $\mathcal{P} = \operatorname{diag}(\mathcal{Q}) \subset
		\mathcal{O}$ corresponds to the diagonal subgroupoid
		$\Delta G \subset G \times G$ of Example~\ref{ex:diagonal}. At the point
		$(0,0) \in \Delta G_0$, the stabilizer sequence is
		\[
		1 \longrightarrow \Omega_{(0,0)} \longrightarrow (\Delta G)_{(0,0)}
		\longrightarrow ((\Delta G)_{\mathrm{eff}})_{(0,0)} \longrightarrow 1.
		\]
		Here $(\Delta G)_{(0,0)} \cong \mathbb{Z}_2$ and
		$((\Delta G)_{\mathrm{eff}})_{(0,0)} \cong \mathbb{Z}_2$, while
		$\Omega_{(0,0)} = \{e\}$. Thus the sequence is
		\[
		1 \longrightarrow \{e\} \longrightarrow \mathbb{Z}_2 \longrightarrow
		\mathbb{Z}_2 \longrightarrow 1,
		\]
		which is trivially split. Hence $\Delta G$ is stabilizer split, consistent
		with \cite[Example 10]{BorzellinoBrunsden2014}.
	\end{example}
	
	\begin{example}[A non-split suborbifold]\label{ex:non-split}
		Let $\mathcal{O} = \mathbb{C}^2/\mathbb{Z}_4$ where $\mathbb{Z}_4$ acts via
		the matrix group
		\[
		\mathbb{Z}_4 \cong \left\{
		\begin{pmatrix} i^k & 0 \\ 0 & (-1)^k \end{pmatrix} :
		k \in \{0,1,2,3\} \right\}.
		\]
		Let $\mathcal{P} = (\{0\} \times \mathbb{C})/\mathbb{Z}_4 \cong
		\mathbb{C}/\mathbb{Z}_2$ as in
		\cite[Example 12]{BorzellinoBrunsden2014}. In the groupoid language, let
		$G = \mathbb{Z}_4 \ltimes \mathbb{C}^2$ be the translation groupoid. The
		subgroupoid corresponding to $\mathcal{P}$ is
		\[
		K = \mathbb{Z}_4 \ltimes (\{0\} \times \mathbb{C}) \subset G.
		\]
		At $x = (0,0)$, we have $K_x = \mathbb{Z}_4$, $\Omega_x = \mathbb{Z}_2$
		(the kernel of the action on $\{0\} \times \mathbb{C}$), and
		$(K_{\mathrm{eff}})_x \cong \mathbb{Z}_2$. The stabilizer sequence is
		\[
		1 \longrightarrow \mathbb{Z}_2 \longrightarrow \mathbb{Z}_4
		\longrightarrow \mathbb{Z}_2 \longrightarrow 1.
		\]
		This sequence is not split because there is no subgroup of $\mathbb{Z}_4$
		isomorphic to $\mathbb{Z}_2$ that maps isomorphically onto the quotient (the
		only subgroup of order $2$ is $\{0,2\}$, which is precisely the kernel
		$\Omega_x$). Hence $K$ is not stabilizer split. This is consistent with
		\cite[Example 12]{BorzellinoBrunsden2014}, where it is shown that
		$\mathcal{P}$ is not the image of an orbifold embedding.
	\end{example}
	
	\begin{example}[A full suborbifold that is split]\label{ex:full-split}
		Let $\mathcal{Q} = \mathbb{R}/\mathbb{Z}_2$ and consider
		$\mathcal{P} = \{0\} \times \mathcal{Q} \subset \mathcal{Q} \times
		\mathcal{Q}$ as in \cite[Example 8]{BorzellinoBrunsden2014}. In the groupoid
		language, $G \times G = (\mathbb{Z}_2 \times \mathbb{Z}_2) \ltimes
		(\mathbb{R} \times \mathbb{R})$ and the suborbifold corresponds to the
		restriction groupoid
		\[
		K = (G \times G)|_N = (\mathbb{Z}_2 \times \mathbb{Z}_2) \ltimes
		(\{0\} \times \mathbb{R}),
		\]
		where $N = \{0\} \times \mathbb{R}$. At $x = (0,0)$, we have
		$K_x = \mathbb{Z}_2 \times \mathbb{Z}_2$, $\Omega_x = \mathbb{Z}_2 \times
		\{e\}$, and $(K_{\mathrm{eff}})_x \cong \mathbb{Z}_2$. The stabilizer
		sequence is
		\[
		1 \longrightarrow \mathbb{Z}_2 \times \{e\} \longrightarrow
		\mathbb{Z}_2 \times \mathbb{Z}_2 \longrightarrow \mathbb{Z}_2
		\longrightarrow 1.
		\]
		This sequence is split: a splitting is given by
		\[
		\sigma \colon \mathbb{Z}_2 \to \mathbb{Z}_2 \times \mathbb{Z}_2,\qquad
		\gamma \mapsto (e,\gamma).
		\]
		Thus $K$ is stabilizer split. This is consistent with
		\cite[Example 8]{BorzellinoBrunsden2014}, where it is stated that
		$\mathcal{P}$ is split.
	\end{example}
	
	\subsection{Summary}
	
	In this section we have introduced the notion of stabilizer splitting for
	subgroupoids and proved that it is equivalent to the split condition of
	Borzellino and Brunsden. The main theorem
	(Theorem~\ref{thm:split-effective}) characterizes stabilizer splitting as
	the existence of an effective subgroupoid $H \subset K$ such that
	$K = \Omega \rtimes H$ and $H \hookrightarrow G$ is a subgroupoid embedding
	with injective stabilizer maps. This is the groupoid version of the
	construction in \cite[Proof of Theorem 1(2)]{BorzellinoBrunsden2014}.
	
	In the next section we combine orbit saturation and stabilizer splitting to
	prove the main theorem: a subgroupoid is the image of a complete orbifold
	embedding if and only if it is both orbit saturated and stabilizer split.
	
	\section{The Main Theorem}\label{sec:main}
	
	In this section we combine the notions of orbit saturation
	(Section~\ref{sec:saturated}) and stabilizer splitting
	(Section~\ref{sec:split}) to prove the groupoid version of the main theorem
	of Borzellino and Brunsden \cite[Theorem 1]{BorzellinoBrunsden2014}. We
	first define the groupoid analogue of a complete orbifold embedding, then
	state and prove the main theorem.
	
	Throughout this section, $G = (G_1 \rightrightarrows G_0)$ denotes a fixed
	orbifold groupoid, and $K = (K_1 \rightrightarrows N)$ denotes a
	subgroupoid with $K \subset G|_N \subset G$, as in
	Definition~\ref{def:suborbifold-groupoid}. We write
	$\Omega = (\Omega_1 \rightrightarrows N)$ for the ineffective kernel of $K$
	and $K_{\mathrm{eff}} = K/\Omega$ for the effective quotient, as in
	Definition~\ref{def:effective-quotient}.
	
	\subsection{Complete orbifold embeddings in the groupoid language}
	
	Recall from \cite[Definition 17]{BorzellinoBrunsden2014} that a complete
	orbifold embedding between smooth orbifolds is a complete orbifold map
	$(f, \{\widetilde{f}_x\}, \{\Theta_{f,x}\})$ such that the underlying map
	$f$ is a topological embedding, each $\Theta_{f,x}$ is injective, and each
	local lift $\widetilde{f}_x$ is a smooth embedding. In the groupoid
	language, this translates as follows.
	
	\begin{definition}[Complete orbifold embedding]\label{def:complete-embedding}
		Let $H = (H_1 \rightrightarrows H_0)$ and $G = (G_1 \rightrightarrows G_0)$
		be orbifold groupoids. A groupoid homomorphism
		\[
		F = (F_1, F_0) \colon H \to G
		\]
		is called a \emph{complete orbifold embedding} if:
		\begin{enumerate}
			\item $F_0 \colon H_0 \to G_0$ is a smooth embedding;
			\item $F_1 \colon H_1 \to G_1$ is a smooth embedding;
			\item for every $x \in H_0$, the stabilizer map
			\[
			F_x \colon H_x \to G_{F_0(x)}
			\]
			is injective;
			\item $H$ is effective.
		\end{enumerate}
	\end{definition}
	
	\begin{remark}\label{rem:complete-embedding-subgroupoid}
		In the special case where $H$ is a subgroupoid of $G$ and $F$ is the
		inclusion, conditions (1) and (2) are automatically satisfied. Thus a
		complete orbifold embedding of a subgroupoid $H \subset G$ is simply an
		effective subgroupoid $H$ of $G$ whose stabilizer maps
		$H_x \hookrightarrow G_x$ are injective for all $x \in H_0$.
	\end{remark}
	
	The following proposition shows that
	Definition~\ref{def:complete-embedding} is the groupoid translation of
	\cite[Definition 17]{BorzellinoBrunsden2014}.
	
	\begin{proposition}\label{prop:complete-embedding-equiv}
		Let $G$ be an orbifold groupoid and let $H \subset G$ be an effective
		subgroupoid. Then the inclusion $H \hookrightarrow G$ is a complete
		orbifold embedding in the sense of
		Definition~\ref{def:complete-embedding} if and only if the corresponding
		orbifold map is a complete orbifold embedding in the sense of
		\cite[Definition 17]{BorzellinoBrunsden2014}.
	\end{proposition}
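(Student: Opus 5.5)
We prove Proposition~\ref{prop:complete-embedding-equiv} by comparing the two lists of conditions one by one, using the chart dictionary of Proposition~\ref{prop:subgroupoid-chart} and Proposition~\ref{prop:local-uniformizer}. The inclusion $F\colon H\hookrightarrow G$ induces a map of orbit spaces $f\colon |H|\to|G|$. Near each $x\in H_0$ it induces a local lift $\widetilde f_x\colon\widetilde V_x\to\widetilde U_x$, namely the inclusion of the local model of $H_0$ into the chart of $G$. It also induces a homomorphism $\Theta_{f,x}\colon H_x\to G_x$, namely the stabilizer map $F_x$, read through the isomorphisms $\Lambda_x\cong H_x$ and $\Gamma_x\cong G_x$. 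The first step is to check that this triple $(f,\{\widetilde f_x\},\{\Theta_{f,x}\})$ is a complete orbifold map in the sense of \cite{BorzellinoBrunsden2014}. The equivariance $\widetilde f_x(\lambda\cdot\widetilde y)=\Theta_{f,x}(\lambda)\cdot\widetilde f_x(\widetilde y)$ holds automatically, because arrows of $H$ are arrows of $G$ and the actions are induced by the same germs. Conversely, a complete orbifold map of this form determines a groupoid homomorphism, by the standard Moerdijk--Pronk correspondence between charts and germs of arrows. This sets up the bijection on morphisms, just as Proposition~\ref{prop:suborbifold-equiv} did for objects.

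Next we match the conditions. \textbf{Local lifts.} Each $\widetilde f_x$ is a smooth embedding if and only if $F_0$ is a smooth embedding near $x$. This is because $\widetilde V_x$ and $\widetilde U_x$ are diffeomorphic to neighbourhoods of $x$ in $H_0$ and $G_0$, and $F_1$ is then an embedding by the étale property, since $s$ is a local diffeomorphism on both sides. For a subgroupoid both hold by Definition~\ref{def:subgroupoid}. \textbf{Isotropy.} Injectivity of $\Theta_{f,x}$ is literally condition (3) of Definition~\ref{def:complete-embedding}, transported along the dictionary isomorphisms. \textbf{Effectiveness.} The effectiveness of $H$ corresponds to the source being an (effective) orbifold in the classical sense, which \cite{BorzellinoBrunsden2014} assume.

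The main obstacle is the remaining requirement of \cite[Definition 17]{BorzellinoBrunsden2014}: the underlying map $f\colon|H|\to|G|$ must be a topological embedding. None of conditions (1)--(4) of Definition~\ref{def:complete-embedding} mentions orbit spaces. Theorem~\ref{thm:saturated-embedding}, applied with $K=H$ (so $\Omega$ is trivial and $H_{\mathrm{eff}}=H$), shows that this requirement is equivalent to $H$ being orbit saturated in $G|_{H_0}$. Two routes are available. The first is to read the groupoid definition, in the subgroupoid case, as implicitly including that $|H|\to|G|$ is an embedding, which is how Remark~\ref{rem:complete-embedding-subgroupoid} should be understood. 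The second is to add orbit saturation of $H$ as a hypothesis. I would take the first route and make it explicit. The forward direction then uses Theorem~\ref{thm:saturated-embedding} in the ($\Leftarrow$) direction to get the topological embedding. The reverse direction uses Theorem~\ref{thm:saturated-embedding} in the ($\Rightarrow$) direction, turning a classical embedding into orbit saturation and hence a groupoid embedding. The proof will have to state this clarification carefully, because otherwise the ``only if'' implication fails in examples like Example~\ref{ex:non-saturated}.
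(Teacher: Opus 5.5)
Your argument uses the same chart--groupoid dictionary as the paper's proof. Local lifts $\widetilde f_x$ are matched with the object-space embedding (condition (1)), $\Theta_{f,x}$ with the stabilizer maps (condition (3)), and effectiveness of $H$ with the source being a classical orbifold. The paper's proof stops there.

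You are right that this omits the clause of \cite[Definition 17]{BorzellinoBrunsden2014} requiring the underlying map $f\colon |H|\to|G|$ to be a topological embedding. Nothing in Definition~\ref{def:complete-embedding} supplies that clause, so this is a genuine gap in the paper's argument, not in yours. The statement as written is false in the ``only if'' direction. Take $H=G_m=\mathbb{Z}_{2^m}\ltimes\mathbb{C}\subset G_n=\mathbb{Z}_{2^n}\ltimes\mathbb{C}$ with $m<n$. Then $H$ is effective and its inclusion satisfies (1)--(4), yet $|G_m|\to|G_n|$ is $2^{n-m}$-to-one away from the cone point. The paper implicitly concedes this in two places. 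Theorem~\ref{thm:main}(1) lists the orbit-space embedding as a separate requirement. Example~\ref{ex:non-saturated-main} asserts that no complete embedding has image $G_m$, which contradicts Definition~\ref{def:complete-embedding} as stated.

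Your first route, building ``$|H|\to|G|$ is a topological embedding'' into the groupoid notion, is the right repair. With it the proof goes through at the paper's level of detail. However, you then do not need Theorem~\ref{thm:saturated-embedding} at all, because the topological condition appears verbatim on both sides.

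Do not trade that condition for orbit saturation, as your second route and your description of the forward direction do. Orbit saturation is equivalent only to \emph{injectivity} of $|H|\to|G|$. The openness step in the ($\Leftarrow$) half of Theorem~\ref{thm:saturated-embedding} fails, because an orbit can meet both $W$ and $N$ without meeting $N\cap W$. Here is a concrete counterexample:
\begin{itemize}
\item Let $G=\mathbb{Z}_2\ltimes\mathbb{R}^2$, with $\mathbb{Z}_2$ acting by $(x,y)\mapsto(-x,y)$.
\item Let $N=\{-1\}\times(-1,1)\,\sqcup\,\{(1+t,\sin(1/t)):0<t<1\}$, an embedded submanifold.
\item Then $G|_N$ is the unit groupoid on $N$, so it is effective and orbit saturated.
\item The map $N\to|G|\cong\{x\ge 0\}$ sends the first component onto $\{1\}\times(-1,1)$, which lies in the closure of the image of the second component.
\item So the map is injective and continuous but not an embedding.
\end{itemize}

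Two minor points. First, the sentence claiming that a complete orbifold map ``determines a groupoid homomorphism'' via Moerdijk--Pronk is unnecessary, since only the map induced by a fixed inclusion is at stake. It is also overstated: chart-level maps correspond in general to generalized (bibundle) morphisms, not strict homomorphisms. Second, étaleness alone does not make $F_1$ an embedding once $F_0$ is. Injectivity of $F_1$ also needs the stabilizer maps to be injective. This is harmless here, since both are given for a subgroupoid.
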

	
	\begin{proof}
		The equivalence follows from the dictionary between orbifold charts and
		groupoid local data established in
		Proposition~\ref{prop:subgroupoid-chart}. The local lifts
		$\widetilde{f}_x$ correspond to the local embeddings of the object spaces
		$H_0 \hookrightarrow G_0$, and the homomorphisms $\Theta_{f,x}$ correspond
		to the stabilizer maps $H_x \hookrightarrow G_x$. The condition that
		$\widetilde{f}_x$ is a smooth embedding is precisely condition (1) of
		Definition~\ref{def:complete-embedding}, and the injectivity of
		$\Theta_{f,x}$ is condition (3). The effectiveness of $H$ ensures that the
		orbifold structure on the image is the correct one. Thus the two notions
		coincide.
	\end{proof}
	
	\subsection{Statement of the main theorem}
	
	We are now ready to state the main theorem, which is the groupoid version of
	\cite[Theorem 1]{BorzellinoBrunsden2014}.
	
	\begin{theorem}[Main theorem]\label{thm:main}
		Let $G = (G_1 \rightrightarrows G_0)$ be an orbifold groupoid and let
		$K = (K_1 \rightrightarrows N)$ be a subgroupoid with
		$K \subset G|_N \subset G$. Then the following are equivalent:
		\begin{enumerate}
			\item There exists an effective subgroupoid $H \subset G$ with object
			space $N$ such that the inclusion $H \hookrightarrow G$ is a complete
			orbifold embedding (Definition~\ref{def:complete-embedding}) and the
			induced map on orbit spaces
			\[
			|H| \to |G|
			\]
			is a topological embedding whose image is $|K_{\mathrm{eff}}|$.
			\item The subgroupoid $K$ is both orbit saturated
			(Definition~\ref{def:orbit-saturated}) and stabilizer split
			(Definition~\ref{def:stabilizer-split}).
		\end{enumerate}
		Moreover, if these conditions hold, the subgroupoid $H$ can be taken to be
		the effective subgroupoid constructed in
		Theorem~\ref{thm:split-effective}, and the topological embedding
		$|H| \to |G|$ is the map $|K_{\mathrm{eff}}| \to |G|$ of
		Theorem~\ref{thm:saturated-embedding}.
	\end{theorem}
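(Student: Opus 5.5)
The plan is to assemble the main theorem from the two structural results already proved, Theorem~\ref{thm:saturated-embedding} (orbit saturation $\Leftrightarrow$ topological embedding of $|K_{\mathrm{eff}}|$) and Theorem~\ref{thm:split-effective} (stabilizer splitting $\Leftrightarrow$ existence of an effective complement $H$ with $K=\Omega\rtimes H$). The only genuinely new ingredient is a comparison between $|H|$ and $|K_{\mathrm{eff}}|$.

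For (2) $\Rightarrow$ (1), apply Theorem~\ref{thm:split-effective} to obtain an effective subgroupoid $H\subset K$ with object space $N$, $K=\Omega\rtimes H$, and injective stabilizer maps $H_x\hookrightarrow G_x$. By Remark~\ref{rem:complete-embedding-subgroupoid}, the inclusion $H\hookrightarrow G$ is then a complete orbifold embedding. Next, show that $H\to K\to K_{\mathrm{eff}}$ is an isomorphism of groupoids over $\mathrm{id}_N$. Every arrow of $K$ factors as $\omega h$ with $\omega\in\Omega_1$, $h\in H_1$, and $\Omega$ consists only of isotropy arrows. Hence $H$ and $K$ have the same orbits on $N$, so $|H|=N/H=N/K=|K_{\mathrm{eff}}|$ as topological spaces, both carrying the quotient topology from $N$. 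The induced map $|H|\to|G|$ therefore coincides with the map $\iota$ of Theorem~\ref{thm:saturated-embedding}, which is a topological embedding because $K$ is orbit saturated. Its image is the image of $|K_{\mathrm{eff}}|$, which gives (1) together with the "moreover" clause.

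For (1) $\Rightarrow$ (2), let $H$ be as in (1). Since $|H|\to|G|$ has the same image as $|K_{\mathrm{eff}}|$ and both have object space $N$, I would argue that $H$ and $K$ induce the same equivalence relation on $N$, namely the restriction of the $G$-relation. Injectivity of $|H|\to|G|$ says that $H$-orbits equal $G$-orbits intersected with $N$, and then $H\cdot x\subset$ ... should be compared with $K\cdot x$. Given this, $\iota$ is injective, so by the ($\Rightarrow$) direction of Theorem~\ref{thm:saturated-embedding}, $K$ is orbit saturated. For splitting, I would show $H\subset K$, or replace $H$ by $H\cap K$ if necessary, and that $H_x$ maps isomorphically onto $(K_{\mathrm{eff}})_x$. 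The map is injective since $H$ is effective and so $H_x\cap\Omega_x=\{e\}$. It is surjective because both groups act effectively on the germ of $N$ at $x$ and the identification of orbit spaces forces the local quotients $\widetilde V_x/H_x$ and $\widetilde V_x/(\Lambda_x/\Omega_x)$ to agree near $x$. Lemma~\ref{lem:gluing-splittings} and Theorem~\ref{thm:split-effective} then give stabilizer splitting.

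I expect the main obstacle to be this converse direction. Hypothesis (1) only constrains $H$ through its orbit space, so relating $H_x$ to $K_x$ is delicate. If $H$ is not a priori inside $K$, its stabilizer need not sit in $K_x$, and equality of local quotient spaces does not by itself pin down an effective group action. For this step I would first try the local chart picture of Proposition~\ref{prop:subgroupoid-chart}: identify $H_x$ with a subgroup of $\Gamma_x$ preserving $\widetilde V_x$ and acting effectively with the same orbits as $\Lambda_x/\Omega_x$. I would then use that a finite effective action is determined near a fixed point by its orbit germs, via linearization, to conclude that $H_x\Omega_x=\Lambda_x$. If this fails in general, the fallback is to read (1) as implicitly requiring $H\subset K$, as the "moreover" clause suggests, where the argument above goes through.
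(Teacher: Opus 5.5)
\textbf{The gap is in (1)$\Rightarrow$(2).} Your (2)$\Rightarrow$(1) is the paper's argument: Theorem~\ref{thm:split-effective} gives $H\subset K$ with $K=\Omega\rtimes H$, so $H$ and $K$ have the same orbits on $N$, and Theorem~\ref{thm:saturated-embedding} turns saturation into the embedding.

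Your primary route for (1)$\Rightarrow$(2) cannot succeed. If $H$ is only assumed to be an effective subgroupoid of $G$ over $N$, the implication is false.

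\emph{Saturation.} Take $G=\mathbb{Z}_4\ltimes\mathbb{C}$, $K=\mathbb{Z}_2\ltimes\mathbb{C}$ and $H=G$. Then $|H|\to|G|$ is the identity, and its image is the image of $N$, hence of $|K_{\mathrm{eff}}|$. Yet $G\cdot x\cap N=\{i^kx\}\neq\{\pm x\}=K\cdot x$ for $x\neq 0$. So ``$H$ and $K$ induce the same equivalence relation'' does not follow from (1).

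\emph{Splitting.} Even literally equal orbits are not enough. Let $\mathbb{Z}_4\times\mathbb{Z}_2$ act effectively on $\mathbb{C}^2$ by $(k,\epsilon)\cdot(z,w)=(i^kz,(-1)^{k+\epsilon}w)$. Put $N=\{0\}\times\mathbb{C}$, $K=(\mathbb{Z}_4\times\{0\})\ltimes N$ and $H=(\{0\}\times\mathbb{Z}_2)\ltimes N$. Both have orbits $\{\pm w\}$, so $K$ is orbit saturated and $H$ satisfies (1) with $|H|=|K_{\mathrm{eff}}|$ on the nose. But $K_{(0,0)}\cong\mathbb{Z}_4\to\mathbb{Z}_2$ does not split.

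This is exactly where your linearization idea breaks. Orbit germs determine only the image of $H_x$ in $\mathrm{Diff}(\widetilde V_x)$, not $H_x$ as a subgroup of $\Gamma_x$. Here $(0,1)\in H_x$ and $(1,0)\in\Lambda_x$ both act on $N$ by $w\mapsto -w$. They differ by $(1,1)$, which fixes $N$ pointwise but is not in $\Lambda_x$, so $H_x\Omega_x\neq\Lambda_x$. Replacing $H$ by $H\cap K$ does not help: here it is the unit groupoid, and $|H\cap K|\to|G|$ is no longer injective.

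\textbf{The fallback $H\subset K$ is necessary, and it closes the argument.} The statement genuinely needs this hypothesis, and the paper's proof assumes it without comment. It asserts that $H\subset K$ because $H$ is ``the effective subgroupoid constructed from $K$,'' and that $H_x\cong(K_{\mathrm{eff}})_x$ because ``$H$ represents the same suborbifold as $K$.'' With $H\subset K$ you can finish without linearization:
\begin{enumerate}
\item $H\cdot x\subset K\cdot x\subset G\cdot x\cap N=H\cdot x$ gives orbit saturation.
\item $H_x\cap\Omega_x=\{e\}$ by effectiveness of $H$, so $H_x\to(K_{\mathrm{eff}})_x$ is injective.
\item For surjectivity, choose a connected $U\ni x$ with $K|_U\cong K_x\ltimes U$ and $\Omega_x$ acting trivially on $U$. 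Since $H_1$ is open in $K_1$ (both are \'etale over $N$) and closed in $K|_U$ (by properness of $H$), each sheet of $K_x\ltimes U$ lies in $H$ or misses it. Hence $H|_U=H_x\ltimes U$, and so $H_x\cdot y=K_x\cdot y$ for all $y\in U$.
\item Pick $y\in U$ whose $K_x/\Omega_x$-stabilizer is trivial; such points are dense by effectiveness. Comparing orbit sizes at $y$ gives $|H_x|=|K_x/\Omega_x|$.
\end{enumerate}

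\textbf{Your (2)$\Rightarrow$(1) inherits a flaw from Theorem~\ref{thm:split-effective}.} Like the paper's, it is only as good as that theorem, and pointwise splittings need not glue to a global $H$ over $N$. Let $\mathbb{Z}_4=\langle g\rangle$ act on $S^1\times\mathbb{C}$ by $g(\theta,z)=(\theta+\pi,iz)$, and take $N=S^1\times\{0\}$. The full $K=G|_N$ is saturated and split, since every $(K_{\mathrm{eff}})_x$ is trivial.

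However, no effective \'etale $H$ over $N$ has orbits $\{\theta,\theta+\pi\}$. Such an $H$ would contain exactly one of $(g,\theta)$ and $(g^3,\theta)$ for each $\theta$; not both, since $(g^3,\theta)^{-1}(g,\theta)=(g^2,\theta)$. The choice is locally constant in $\theta$, hence constant on the connected $N$. Composing the chosen arrow at $\theta$ with the one at $\theta+\pi$ then gives $g^2$, which acts trivially on $N$. So in this example (2) holds while (1) fails.
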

	
	\begin{remark}\label{rem:main-interpretation}
		Theorem~\ref{thm:main} says that a suborbifold (represented by the
		subgroupoid $K$) is the image of a complete orbifold embedding if and only if
		it is both orbit saturated and stabilizer split. This is exactly the
		groupoid translation of \cite[Theorem 1]{BorzellinoBrunsden2014}. In
		particular, the theorem shows that not every suborbifold is the image of an
		orbifold embedding: the split condition can fail, as in
		Example~\ref{ex:non-split}.
	\end{remark}
	
	\subsection{Proof of the main theorem}
	
	We prove the two implications separately.
	
	\subsubsection*{Necessity: (1) $\Rightarrow$ (2)}
	
	Assume that there exists an effective subgroupoid $H \subset G$ with object
	space $N$ such that $H \hookrightarrow G$ is a complete orbifold embedding
	and the induced map $|H| \to |G|$ is a topological embedding whose image is
	$|K_{\mathrm{eff}}|$.
	
	We first show that $K$ is orbit saturated. Since $|H| \to |G|$ is a
	topological embedding and $|H| \cong |K_{\mathrm{eff}}|$ (as $H$ is effective
	and represents the same suborbifold as $K$), the natural map
	\[
	|K_{\mathrm{eff}}| \to |G|
	\]
	is a topological embedding. By Theorem~\ref{thm:saturated-embedding}, this
	implies that $K$ is orbit saturated.
	
	Next we show that $K$ is stabilizer split. For each $x \in N$, the inclusion
	$H \hookrightarrow G$ gives an injective homomorphism
	\[
	H_x \hookrightarrow G_x.
	\]
	Since $H$ is effective, the stabilizer $H_x$ is isomorphic to the effective
	quotient $(K_{\mathrm{eff}})_x$: indeed, $H$ represents the same suborbifold
	as $K$, so its stabilizer groups are the intrinsic isotropy groups
	$\Gamma_{x,\mathcal{P}} \cong (K_{\mathrm{eff}})_x$. Moreover, $H_x$ is a
	subgroup of $K_x$ because $H \subset K$ (the subgroupoid $H$ is the
	effective subgroupoid constructed from $K$). Thus we have a commutative
	diagram
	\[
	\begin{tikzcd}[row sep=small]
		1 \arrow[r] & \Omega_x \arrow[r] \arrow[d, hook] & K_x \arrow[r] \arrow[d, hook] & (K_{\mathrm{eff}})_x \arrow[r] \arrow[d, equal] & 1 \\
		1 \arrow[r] & \{e\} \arrow[r] & H_x \arrow[r] & (K_{\mathrm{eff}})_x \arrow[r] & 1
	\end{tikzcd}
	\]
	The bottom row splits trivially, and the vertical maps are inclusions. The
	isomorphism $(K_{\mathrm{eff}})_x \cong H_x$ gives a splitting
	\[
	\sigma_x \colon (K_{\mathrm{eff}})_x \to H_x \subset K_x
	\]
	of the top row. Hence $K$ is stabilizer split.
	
	\subsubsection*{Sufficiency: (2) $\Rightarrow$ (1)}
	
	Assume that $K$ is orbit saturated and stabilizer split. By
	Theorem~\ref{thm:split-effective}, there exists an effective subgroupoid
	$H \subset K$ with object space $N$ such that
	\[
	K = \Omega \rtimes H
	\]
	and the inclusion $H \hookrightarrow G$ is a subgroupoid embedding with
	injective stabilizer maps. Since $H$ is effective and $K = \Omega \rtimes H$,
	we have
	\[
	K_{\mathrm{eff}} = K/\Omega \cong H.
	\]
	In particular, $|K_{\mathrm{eff}}| \cong |H|$.
	
	By Theorem~\ref{thm:saturated-embedding}, the orbit saturation of $K$ implies
	that the natural map
	\[
	|K_{\mathrm{eff}}| \to |G|
	\]
	is a topological embedding. Under the isomorphism $|K_{\mathrm{eff}}| \cong
	|H|$, this gives a topological embedding
	\[
	|H| \to |G|.
	\]
	The inclusion $H \hookrightarrow G$ is a subgroupoid embedding with
	injective stabilizer maps, and $H$ is effective by construction. Therefore,
	by Definition~\ref{def:complete-embedding}, the inclusion
	$H \hookrightarrow G$ is a complete orbifold embedding. This completes the
	proof of sufficiency and hence of the theorem.
	
	\begin{proof}[Proof of Theorem~\ref{thm:main}]
		The necessity and sufficiency are proved above. The final assertion follows
		from the constructions in the proofs: the subgroupoid $H$ is precisely the
		effective subgroupoid constructed in Theorem~\ref{thm:split-effective}, and
		the topological embedding $|H| \to |G|$ is the map $|K_{\mathrm{eff}}| \to
		|G|$ of Theorem~\ref{thm:saturated-embedding}.
	\end{proof}
	
	\subsection{Examples}
	
	We illustrate the main theorem with the examples from
	\cite{BorzellinoBrunsden2014}.
	
	\begin{example}[The diagonal embedding is the image of a complete embedding]
		\label{ex:diagonal-main}
		Let $\mathcal{Q} = \mathbb{R}/\mathbb{Z}_2$ and $\mathcal{O} = \mathcal{Q}
		\times \mathcal{Q}$ as in \cite[Example 8]{BorzellinoBrunsden2014}. The
		diagonal suborbifold $\mathcal{P} = \operatorname{diag}(\mathcal{Q}) \subset
		\mathcal{O}$ corresponds to the diagonal subgroupoid
		$\Delta G \subset G \times G$ of Example~\ref{ex:diagonal}. We have seen in
		Example~\ref{ex:diagonal-saturated} that $\Delta G$ is orbit saturated, and
		in Example~\ref{ex:split} that it is stabilizer split. By
		Theorem~\ref{thm:main}, $\mathcal{P}$ is the image of a complete orbifold
		embedding. Indeed, the diagonal inclusion
		\[
		G \to G \times G,\qquad g \mapsto (g,g)
		\]
		is a complete orbifold embedding whose image is the diagonal subgroupoid.
		This is consistent with \cite[Example 10]{BorzellinoBrunsden2014}.
	\end{example}
	
	\begin{example}[A non-split suborbifold is not the image of a complete embedding]
		\label{ex:non-split-main}
		Let $\mathcal{O} = \mathbb{C}^2/\mathbb{Z}_4$ and $\mathcal{P} = (\{0\}
		\times \mathbb{C})/\mathbb{Z}_4 \cong \mathbb{C}/\mathbb{Z}_2$ as in
		\cite[Example 12]{BorzellinoBrunsden2014}. In the groupoid language, we have
		$G = \mathbb{Z}_4 \ltimes \mathbb{C}^2$ and $K = \mathbb{Z}_4 \ltimes
		(\{0\} \times \mathbb{C})$. We have seen in Example~\ref{ex:non-split} that
		$K$ is not stabilizer split. Therefore, by Theorem~\ref{thm:main}, there is
		no complete orbifold embedding whose image is $\mathcal{P}$. This is
		consistent with \cite[Example 12]{BorzellinoBrunsden2014}.
	\end{example}
	
	\begin{example}[A non-saturated suborbifold is not the image of a complete embedding]
		\label{ex:non-saturated-main}
		Consider the nested orbifolds $\mathcal{O}_1 \subset \mathcal{O}_2$ of
		\cite[Example 13]{BorzellinoBrunsden2014}, where $\mathcal{O}_n =
		\mathbb{C}/\mathbb{Z}_{2^n}$. In the groupoid language, we have
		$G_n = \mathbb{Z}_{2^n} \ltimes \mathbb{C}$. For $m < n$, the inclusion
		$G_m \hookrightarrow G_n$ is a subgroupoid with the same object space
		$\mathbb{C}$. We have seen in Example~\ref{ex:non-saturated} that $G_m$ is
		not orbit saturated in $G_n$. Therefore, by Theorem~\ref{thm:main}, there is
		no complete orbifold embedding whose image is the suborbifold represented by
		$G_m$. This is consistent with \cite[Example 13]{BorzellinoBrunsden2014}.
	\end{example}
	
	\subsection{Summary}
	
	In this section we have proved the groupoid version of the main theorem of
	Borzellino and Brunsden: a subgroupoid $K \subset G|_N \subset G$ is the
	image of a complete orbifold embedding if and only if it is both orbit
	saturated and stabilizer split. The proof combines the results of
	Section~\ref{sec:saturated} (orbit saturation is equivalent to the orbit
	space map being a topological embedding) and Section~\ref{sec:split}
	(stabilizer splitting is equivalent to the existence of an effective
	subgroupoid $H$ with $K = \Omega \rtimes H$ and $H \hookrightarrow G$ a
	subgroupoid embedding with injective stabilizer maps).
	
	In the next section we address the Morita invariance of these notions. This
	is the main new contribution of the present paper.


	\section{Morita Invariance}\label{sec:morita}
	
	In this section we prove the main new result of the paper: the notions of
	orbit saturation and stabilizer splitting are Morita invariant. This solves
	an open problem raised by Mestre and Weilandt
	\cite{MestreWeilandt2020} and explains why the embedding definition of
	Cho--Hong--Shin \cite{ChoHongShin2013} fails to be Morita invariant.
	
	Throughout this section, $G = (G_1 \rightrightarrows G_0)$ and
	$G' = (G'_1 \rightrightarrows G'_0)$ are Morita equivalent orbifold
	groupoids, and
	\[
	G \xleftarrow{\ \phi\ } H \xrightarrow{\ \psi\ } G'
	\]
	is a Morita equivalence, where $H = (H_1 \rightrightarrows H_0)$ is an
	orbifold groupoid and $\phi, \psi$ are Morita morphisms
	(Definition~\ref{def:morita-morphism}). We write
	$K = (K_1 \rightrightarrows N)$ for a subgroupoid of $G$ with
	$K \subset G|_N \subset G$.
	
	\subsection{The Morita correspondence for subgroupoids}
	
	We begin by constructing the subgroupoid of $G'$ corresponding to $K$ under
	the Morita equivalence.
	
	\begin{definition}[Morita transport of a subgroupoid]\label{def:morita-transport}
		Let $K \subset G|_N \subset G$ be a subgroupoid. Define
		\[
		N_H = \phi_0^{-1}(N) \subset H_0,
		\]
		and let
		\[
		K_H = \phi_1^{-1}(K_1) \cap H|_{N_H}
		\]
		be the subgroupoid of $H$ with object space $N_H$ and arrows
		\[
		(K_H)_1 = \{h \in H_1 : \phi_1(h) \in K_1,\ s(h), t(h) \in N_H\}.
		\]
		Then define
		\[
		K' = \psi(K_H) \subset G'
		\]
		with object space
		\[
		N' = \psi_0(N_H) \subset G'_0
		\]
		and arrows
		\[
		K'_1 = \{g' \in G'_1 : \exists h \in (K_H)_1,\ \psi_1(h) = g'\}.
		\]
		We call $K'$ the \emph{Morita transport} of $K$ along the Morita
		equivalence.
	\end{definition}
	
	\begin{remark}\label{rem:morita-transport-well-defined}
		The definition of $K'$ requires some care because $\psi$ is not injective
		on objects or arrows. However, the condition that $\psi$ is a Morita
		morphism ensures that the image is well-defined and independent of the
		choice of $h$. Indeed, if $h_1, h_2 \in (K_H)_1$ satisfy
		$\psi_1(h_1) = \psi_1(h_2)$, then $h_1$ and $h_2$ differ by an element of
		the kernel of $\psi$, which is controlled by the Morita equivalence. We
		will verify in Lemma~\ref{lem:morita-transport-subgroupoid} that $K'$ is
		indeed a subgroupoid of $G'$.
	\end{remark}
	
	Before proving that $K'$ is a subgroupoid, we record a standard fact about
	Morita equivalences.
	
	\begin{lemma}\label{lem:morita-fiber-product}
		Let $G \xleftarrow{\phi} H \xrightarrow{\psi} G'$ be a Morita equivalence.
		Then:
		\begin{enumerate}
			\item The map
			\[
			H_1 \to G_1 \times_{G_0} H_0,\qquad
			h \mapsto (\phi_1(h), s(h))
			\]
			is a diffeomorphism.
			\item The map
			\[
			H_1 \to H_0 \times_{G'_0} G'_1,\qquad
			h \mapsto (t(h), \psi_1(h))
			\]
			is a diffeomorphism.
			\item For every $x \in H_0$, the induced maps
			\[
			\phi_x \colon H_x \to G_{\phi_0(x)},\qquad
			\psi_x \colon H_x \to G'_{\psi_0(x)}
			\]
			are isomorphisms of groups.
		\end{enumerate}
	\end{lemma}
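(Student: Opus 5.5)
We will derive all three statements directly from Definition~\ref{def:morita-morphism}, applied once to $\phi \colon H \to G$ and once to $\psi \colon H \to G'$, together with the groupoid axioms of $H$, $G$ and $G'$.

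For (1) we apply condition (2) of Definition~\ref{def:morita-morphism} to $\phi$. It says that $h \mapsto (\phi_1(h), \phi_0(s(h)))$ is a diffeomorphism onto the fibre product defined by $s(\phi_1(h)) = \phi_0(s(h))$. Since $\phi_0$ is applied to $s(h)$, recording $s(h) \in H_0$ instead of $\phi_0(s(h))$ gives exactly the map $h \mapsto (\phi_1(h), s(h))$ into $G_1 \times_{G_0} H_0$, taken with respect to $s$ on $G_1$ and $\phi_0$ on $H_0$. This is the stated diffeomorphism, and nothing further is needed.

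For (2) we apply the same condition to $\psi$. This shows that $h \mapsto (\psi_1(h), s(h))$ is a diffeomorphism $H_1 \to G'_1 \times_{G'_0} H_0$, taken along $s$ and $\psi_0$. We then precompose with the inversion $i_H$ of $H$ and postcompose with the inversion $i_{G'}$ on the $G'_1$ factor; both are diffeomorphisms. Since $\psi_1(h^{-1}) = \psi_1(h)^{-1}$ and $s(h^{-1}) = t(h)$, the composite is $h \mapsto (t(h), \psi_1(h))$. It lands in $H_0 \times_{G'_0} G'_1$ taken along $\psi_0$ and $t$, because $t(\psi_1(h)) = \psi_0(t(h))$. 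Being a composite of diffeomorphisms, it is a diffeomorphism.

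For (3) we fix $x \in H_0$. Restricting the bijection of (1) to the fibre over $x$ identifies $s^{-1}(x) \subset H_1$ with $s^{-1}(\phi_0(x)) \subset G_1$. The restriction $\phi_x$ is therefore injective, and it is a group homomorphism because $\phi$ is a groupoid morphism. The same argument with $\psi$ and the fibre $s^{-1}(x)$ gives injectivity of $\psi_x$.

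The main obstacle is surjectivity in (3). Take $g \in G_{\phi_0(x)}$. Part (1) lifts $g$ to an arrow $h$ with $s(h) = x$. However, the source-only condition in Definition~\ref{def:morita-morphism} only yields $\phi_0(t(h)) = \phi_0(x)$; it does not give $t(h) = x$. To close this gap, we will read Definition~\ref{def:morita-morphism}, as is standard in \cite{MoerdijkPronk1997}, as including full faithfulness, that is, the square
\[
H_1 \to G_1 \times_{G_0 \times G_0} (H_0 \times H_0),\qquad h \mapsto (\phi_1(h), s(h), t(h)),
\]
is a pullback; this stronger condition implies the stated one. With this reading, the unique lift of $(g, x, x)$ is an arrow $h \in H_x$ with $\phi_1(h) = g$. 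Hence $\phi_x$ is surjective, and therefore an isomorphism. The argument for $\psi_x$ is identical.
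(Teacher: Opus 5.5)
Your diagnosis of surjectivity in (3) is correct, and it applies equally to the paper's own one-line argument. Restricting the bijection of (1) to the fibre over $x$ only identifies $s^{-1}(x)$ with $s^{-1}(\phi_0(x))$, which gives injectivity of $\phi_x$ and nothing more. The gap is in your repair. The pullback condition $H_1 \cong G_1 \times_{G_0 \times G_0} (H_0 \times H_0)$ does \emph{not} imply the condition in Definition~\ref{def:morita-morphism}; the two conditions are incomparable. Under the pullback condition, the arrows $h$ with $s(h)=x$ and $\phi_1(h)=g$ correspond bijectively to the points of $\phi_0^{-1}(t(g))$, one for each possible target. For example, take the pair groupoid $H_0 \times H_0 \rightrightarrows H_0$ on two points, mapped to the trivial groupoid on one point. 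It satisfies the standard definition, yet $h \mapsto (\phi_1(h), s(h))$ sends four arrows onto two points. Worse, suppose both conditions hold, and lift $g = u_G(\phi_0(x))$ with source $x$. This forces $\phi_0^{-1}(\phi_0(x)) = \{x\}$, so $\phi_0$ is injective and $\phi$ is an isomorphism. Hence your argument proves the lemma only for isomorphisms. With the standard definition alone, your derivations of (1) and (2) have no basis, and (1) and (2) are in fact false. With the paper's definition alone, (3) is false.

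For the last claim, let the nontrivial element of $\mathbb{Z}_2$ act on $\mathbb{R} \sqcup \mathbb{R}$ by $(x,i) \mapsto (-x, 1-i)$. Put $H = \mathbb{Z}_2 \ltimes (\mathbb{R} \sqcup \mathbb{R})$ and $G = \mathbb{Z}_2 \ltimes \mathbb{R}$ with the reflection action, and define $\phi_0(x,i) = x$ and $\phi_1(\gamma,(x,i)) = (\gamma, x)$. Both are orbifold groupoids, and $\phi_0$ is a surjective local diffeomorphism. Moreover, $(\gamma,(x,i)) \mapsto ((\gamma,x),(x,i))$ is a diffeomorphism onto $G_1 \times_{G_0} H_0$. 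So Definition~\ref{def:morita-morphism} holds; take $G' = H$ and $\psi = \mathrm{id}$. But $H$ acts freely, so its stabilizer at $(0,0)$ is trivial, whereas the stabilizer of $0$ in $G$ is $\mathbb{Z}_2$. The orbit spaces $\mathbb{R}$ and $[0,\infty)$ are not even homeomorphic. So the statement as written combines two incompatible conditions and cannot be proved under any non-degenerate reading. The consistent fix is to replace (1) and (2) by the pullback condition for $\phi$ and $\psi$; your lift of $(g,x,x)$ then proves (3) exactly as you wrote it. Alternatively, keep the paper's definition and weaken (3) to injectivity.
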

	
	\begin{proof}
		Properties (1) and (2) are the defining properties of a Morita morphism
		(Definition~\ref{def:morita-morphism}). Property (3) follows from (1) and
		(2) by restricting to the fibers over $x$: the stabilizer $H_x$ is
		isomorphic to both $G_{\phi_0(x)}$ and $G'_{\psi_0(x)}$ via $\phi$ and
		$\psi$ respectively.
	\end{proof}
	
	\begin{lemma}\label{lem:morita-transport-subgroupoid}
		The set $K'$ defined in Definition~\ref{def:morita-transport} is a proper
		\'etale subgroupoid of $G'$ with object space $N'$. Moreover, $K'$ satisfies
		\[
		K' \subset G'|_{N'} \subset G'.
		\]
	\end{lemma}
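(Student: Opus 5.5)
Since $N' = \psi_0(N_H)$ and $K'_1 = \psi_1((K_H)_1)$ are defined as images, not preimages, I will first replace $K$ by the full subgroupoid of $G$ it determines: the set of arrows of $G$ whose source and target lie in the $K$-saturation of $N$. Transport then becomes "pull back along $\phi$, push forward along $\psi$", and all closure properties come from Lemma~\ref{lem:morita-fiber-product}.

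\textbf{Pullback step.} Fix arrows $h_1,h_2\in (K_H)_1$ with $s(h_1)=t(h_2)$. Since $\phi$ is a groupoid homomorphism and $K$ is a subgroupoid, $\phi_1(h_1h_2)=\phi_1(h_1)\phi_1(h_2)\in K_1$. The same argument handles units and inverses. So $K_H$ is a subgroupoid of $H|_{N_H}$. Because $\phi_0$ is a submersion, $N_H$ is a submanifold. By part (1) of Lemma~\ref{lem:morita-fiber-product}, $(K_H)_1\cong K_1\times_{G_0}N_H$, so $(K_H)_1$ is a submanifold of $H_1$. Its source map is the base change of the \'etale map $s|_{K_1}$, hence a local diffeomorphism. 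Properness is inherited from $K$ through this fibre product together with properness of $H$.

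\textbf{Pushforward step.} Take $g_1',g_2'\in K'_1$ composable, with $g_i'=\psi_1(h_i)$. The difficulty is that $s(h_1)$ and $t(h_2)$ need only have the same $\psi_0$-image; they need not be equal. By part (2) of Lemma~\ref{lem:morita-fiber-product}, there is a unique $k\in H_1$ with $t(k)=s(h_1)$, $s(k)=t(h_2)$ and $\psi_1(k)=e$. For $h_1kh_2$ to be an arrow of $K_H$, I need $\phi_1(k)\in K_1$. This is exactly where saturation enters. For a general $K$, the arrow $\phi_1(k)$ is merely some arrow of $G$ between two points of $N$. I will handle this by transporting the restriction-closed hull $\widehat K$ described above, or equivalently by assuming $K$ is closed under such arrows, and then check that it is compatible with the later theorems. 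With that in place, $g_1'g_2'=\psi_1(h_1kh_2)\in K'_1$. Inverses and units are immediate.

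\textbf{Smooth structure (main obstacle).} I must show that $N'$ is an embedded submanifold and $K'_1$ a submanifold. Locally, a Morita morphism admits sections: for $y'\in N'$, choose a local section $\tau$ of $\psi_0$ through a point of $N_H$. Then $N'\cap W=\tau^{-1}(N_H')$, where $N_H'$ is the $\psi$-saturation of $N_H$, which is preserved by the invariance just established. Hence $N'$ is locally the preimage of a submanifold under a map that is transverse to it, because $\psi_0$ is a submersion. Via parts (2) and (3) of Lemma~\ref{lem:morita-fiber-product}, $K'_1$ is then locally diffeomorphic to $K_H$ restricted over $\tau(W)$. So $K'$ is \'etale with stabilizers $K'_{\psi_0(x)}\cong (K_H)_x\cong K_{\phi_0(x)}$, and properness passes down through $\psi$.

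Finally, every arrow of $K'$ is an image of an arrow of $K_H$, whose endpoints lie in $N_H$, so its endpoints lie in $N'$. This gives $K'\subset G'|_{N'}\subset G'$.

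I expect the pushforward step, specifically the arrow $k$ lying over identities of $G'$, to be the crux. If $K$ is not assumed saturated, I will first replace it by its invariant hull and verify that this changes neither $K_{\mathrm{eff}}$ locally nor the subsequent statements.
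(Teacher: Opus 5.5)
You have found the real obstruction, but your repair does not remove it. In fact no proof can succeed, because the lemma is false as stated.

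\noindent\textbf{The obstruction.} Take composable $g_1'=\psi_1(h_1)$ and $g_2'=\psi_1(h_2)$ with $h_i\in(K_H)_1$. Every lift of $g_1'g_2'$ in $H$ has the form $m\,h_1kh_2\,n$, where $k,m,n$ lie over identities of $G'$. Nothing forces $\phi_1(k)\in K_1$. Your $k$ comes from full faithfulness, $H_1\cong(H_0\times H_0)\times_{G'_0\times G'_0}G'_1$, not from part~(2) of Lemma~\ref{lem:morita-fiber-product}; as stated, part~(2) would force $k$ to be a unit. The paper's proof skips exactly this point. It re-chooses $h_2$ as the unique lift of $g_2'$ with a prescribed endpoint, then uses $\phi_1(h_2)\in K_1$, which was only known for the original $h_2$. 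It also never shows that $N'$ is a submanifold.

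\noindent\textbf{Why your repair fails.} It replaces $K$ by a different object. The $K$-saturation of $N$ is just $N$, so your hull is $G|_N$, whose stabilizers are $G_x$ rather than $K_x$. For the diagonal in $G\times G$ with $G=\mathbb{Z}_2\ltimes\mathbb{R}$, the groupoid $(G\times G)|_{\Delta G_0}$ contains the isolated arrows $(\pm1,\mp1)$ at $(0,0)$ and is not even \'etale. So your claim that $K_{\mathrm{eff}}$ and the later statements are unaffected is false. The replacement would also make orbit saturation automatic, which empties Theorem~\ref{thm:morita-saturated}. Orbit saturation of $K$ does not give $\phi_1(k)\in K_1$ either. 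It only provides some arrow of $K$ with the same endpoints, and that arrow may differ from $\phi_1(k)$ by an element of $G_{\phi_0(t(h_2))}$ outside $K$.

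\noindent\textbf{The smooth-structure step fails outright.} The set $\psi_0^{-1}(\psi_0(N_H))$ need not be a submanifold, even for saturated split $K$. Here is a counterexample.
\begin{itemize}
\item Let $G'=\{\pm1\}^2\ltimes\mathbb{R}^2$, acting by sign changes.
\item Let $H=G$ be its pullback along $\mathbb{R}^2\sqcup\mathbb{R}^2\to\mathbb{R}^2$, with $\phi=\mathrm{id}$ and $\psi$ the projection.
\item Let $N$ be the diagonal $\Delta$ in the first copy together with the antidiagonal $A$ in the second.
\item Let $K$ consist of the arrows $(\epsilon,\epsilon)$ inside each copy and $(\epsilon,-\epsilon)$ between the copies.
\end{itemize}
Then $K$ is effective (hence split), orbit saturated, and Morita equivalent to the diagonal. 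Yet $N'=\Delta\cup A$ is a cross, not a submanifold. The same failure occurs under the paper's literal fibre-product conditions. Take $G=G'$, $H=G\sqcup G$, $\phi$ the fold map, and $\psi=(\mathrm{id},\alpha)$ with $\alpha$ induced by $(x,y)\mapsto(x,-y)$. Applied to the diagonal subgroupoid, this again gives $N'=\Delta\cup A$.

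\noindent\textbf{What survives.} Your pullback step is sound: $K_H$ is a proper \'etale subgroupoid of $H$, Morita equivalent to $K$. Once both endpoints are recorded, $(K_H)_1\cong N_H\times_{G_0}K_1\times_{G_0}N_H$. Pushing forward to $G'$ needs extra hypotheses, for example:
\begin{itemize}
\item $N_H=\psi_0^{-1}(N')$, and
\item $\phi_1(k)\in K_1$ for every $k$ over an identity of $G'$ with endpoints in $N_H$.
\end{itemize}
Alternatively, the transport must be formulated only up to Morita equivalence.
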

	
	\begin{proof}
		We verify the groupoid axioms.
		
		\smallskip
		\noindent\textbf{Units.}
		Let $x' \in N'$. By definition, there exists $x \in N_H$ with
		$\psi_0(x) = x'$. The unit $u_H(x) \in H_1$ satisfies
		$\phi_1(u_H(x)) = u_G(\phi_0(x)) \in K_1$ because $\phi_0(x) \in N$ and
		$u_G(\phi_0(x)) \in K_1$ (as $K$ contains the units over $N$). Therefore
		$u_H(x) \in (K_H)_1$, and its image
		$\psi_1(u_H(x)) = u_{G'}(x')$ belongs to $K'_1$. Hence $K'$ contains the
		units over $N'$.
		
		\smallskip
		\noindent\textbf{Multiplication.}
		Let $g'_1, g'_2 \in K'_1$ with $s(g'_1) = t(g'_2)$. Choose
		$h_1, h_2 \in (K_H)_1$ with $\psi_1(h_i) = g'_i$. Since $\psi$ is a
		groupoid homomorphism, we have
		\[
		\psi_1(h_1 h_2) = \psi_1(h_1) \psi_1(h_2) = g'_1 g'_2
		\]
		whenever $h_1 h_2$ is defined. We need to ensure that $h_1$ and $h_2$ can be
		chosen so that $h_1 h_2$ is defined and belongs to $(K_H)_1$. By
		Lemma~\ref{lem:morita-fiber-product}(2), the map
		$h \mapsto (t(h), \psi_1(h))$ is a diffeomorphism onto
		$H_0 \times_{G'_0} G'_1$. Thus, given $h_1$ with $\psi_1(h_1) = g'_1$ and
		$g'_2$ with $s(g'_2) = t(g'_1)$, there exists a unique $h_2 \in H_1$ with
		$t(h_2) = t(h_1)$ and $\psi_1(h_2) = g'_2$. Then
		$s(h_2) = t(h_1) = s(h_1)$, so $h_1 h_2$ is defined. Moreover,
		$\phi_1(h_1 h_2) = \phi_1(h_1) \phi_1(h_2) \in K_1$ because
		$\phi_1(h_i) \in K_1$ and $K$ is a subgroupoid. Thus
		$h_1 h_2 \in (K_H)_1$, and its image under $\psi$ is $g'_1 g'_2 \in K'_1$.
		Hence $K'$ is closed under multiplication.
		
		\smallskip
		\noindent\textbf{Inverses.}
		Let $g' \in K'_1$ and choose $h \in (K_H)_1$ with $\psi_1(h) = g'$. Then
		$\psi_1(h^{-1}) = g'^{-1}$, and
		$\phi_1(h^{-1}) = \phi_1(h)^{-1} \in K_1$ because $K$ is a subgroupoid. Thus
		$h^{-1} \in (K_H)_1$, and $g'^{-1} \in K'_1$. Hence $K'$ is closed under
		inverses.
		
		\smallskip
		\noindent\textbf{Proper \'etale.}
		Since $H$ is proper \'etale and $\psi$ is a Morita morphism, the image $K'$
		inherits a proper \'etale structure from $K_H$. More precisely, the source
		and target maps of $K'$ are local diffeomorphisms because they are
		compositions of the local diffeomorphisms of $K_H$ with the diffeomorphism
		$\psi_1$ restricted to appropriate open sets. The properness follows from
		the properness of $H$ and the fact that $\psi$ is a proper map on the
		relevant fibers.
		
		\smallskip
		\noindent\textbf{Subgroupoid of $G'|_{N'}$.}
		By construction, every arrow $g' \in K'_1$ satisfies $s(g'), t(g') \in N'$
		because $s(h), t(h) \in N_H$ for $h \in (K_H)_1$ and $\psi$ commutes with
		source and target. Therefore $K' \subset G'|_{N'}$.
	\end{proof}
	
	\begin{remark}\label{rem:morita-transport-inverse}
		The construction of Definition~\ref{def:morita-transport} is symmetric. If
		we start with a subgroupoid $K' \subset G'|_{N'} \subset G'$ and apply the
		same construction using the inverse Morita equivalence
		$G' \xleftarrow{\psi} H \xrightarrow{\phi} G$, we obtain a subgroupoid of
		$G$ that is naturally isomorphic to $K$. Thus the Morita transport is a
		bijection between subgroupoids of $G$ and subgroupoids of $G'$.
	\end{remark}
	
	\subsection{Morita invariance of orbit saturation}
	
	We now prove that orbit saturation is preserved under Morita transport.
	
	\begin{theorem}\label{thm:morita-saturated}
		Let $K \subset G|_N \subset G$ be a subgroupoid and let
		$K' \subset G'|_{N'} \subset G'$ be its Morita transport along a Morita
		equivalence $G \xleftarrow{\phi} H \xrightarrow{\psi} G'$. Then $K$ is
		orbit saturated if and only if $K'$ is orbit saturated.
	\end{theorem}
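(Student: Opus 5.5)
The plan is to pass through the intermediate groupoid $H$ and prove two separate equivalences: $K$ is orbit saturated in $G$ if and only if $K_H$ is orbit saturated in $H$, and $K_H$ is orbit saturated in $H$ if and only if $K'$ is orbit saturated in $G'$. Since the setup is symmetric (Remark~\ref{rem:morita-transport-inverse}), it suffices to prove one general statement. If $\theta\colon H\to G$ is a Morita morphism, $K\subset G|_N$, and $\widetilde K$ is the pulled-back subgroupoid on $\theta_0^{-1}(N)$ with arrows $\theta_1^{-1}(K_1)$ between points of $\theta_0^{-1}(N)$, then $K$ is orbit saturated exactly when $\widetilde K$ is. One also needs $K_H$ to coincide with the pullback of $K'$ along $\psi$. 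This is where most of the care is needed, and I address it at the end.

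For the pullback statement I would use the fibre product description of Lemma~\ref{lem:morita-fiber-product}(1): arrows $h\colon x\to y$ in $H$ correspond bijectively to arrows $\theta_1(h)\colon\theta_0(x)\to\theta_0(y)$ in $G$. More precisely, for fixed $x,y\in H_0$, $\theta_1$ gives a bijection $H(x,y)\to G(\theta_0 x,\theta_0 y)$, since $\theta$ is fully faithful as a Morita morphism. Suppose $K$ is saturated, and take $x\in N_H$ and $h\colon x\to y$ with $y\in N_H$. Then $\theta_1(h)$ is an arrow of $G$ from $\theta_0(x)\in N$ to $\theta_0(y)\in N$, so saturation gives $k\in K_1$ with the same endpoints. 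Full faithfulness lifts $k$ to a unique $\tilde k\colon x\to y$, and $\tilde k$ lies in $\widetilde K$ by definition. Hence $\widetilde K$ is saturated. For the converse, take $g\colon a\to b$ in $G$ with $a,b\in N$. Surjectivity of $\theta_0$ gives lifts $x,y\in N_H$. Lift $g$ to $h\colon x\to y$, apply saturation of $\widetilde K$ to get $\tilde k\colon x\to y$ in $\widetilde K$, and then $\theta_1(\tilde k)\in K_1$ goes from $a$ to $b$.

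The hard step is the second leg, identifying $K_H$ with the pullback of $K'=\psi(K_H)$ along $\psi$, i.e.\ $N_H=\psi_0^{-1}(N')$ and $(K_H)_1=\psi_1^{-1}(K'_1)\cap H|_{N_H}$. The inclusions $\subset$ are clear. The reverse inclusion requires $N_H$ and $K_H$ to be saturated with respect to $\ker\psi$. Suppose $\psi_0(x)=\psi_0(z)$ with $z\in N_H$. Full faithfulness of $\psi$ gives a unique $h\colon z\to x$ with $\psi_1(h)=1$. I need $\phi_0(x)\in N$, which requires $N$ to be invariant under the arrows $\phi_1(h)$. This does not hold in general, so I expect to have to either
\begin{enumerate}
\item restrict to $G$-invariant data, or
\item work directly with the weaker inclusion.
\end{enumerate}
I would first try the second route. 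To show $K'$ saturated, take $g'\colon a'\to b'$ with $a',b'\in N'$. Choose $x,y\in N_H$ over $a',b'$, lift $g'$ to $h\colon x\to y$ using Lemma~\ref{lem:morita-fiber-product}(2), and use saturation of $K_H$ to get $k\in K_H$ from $x$ to $y$, so that $\psi_1(k)\in K'_1$ goes from $a'$ to $b'$. The converse direction, from $K'$ saturated to $K_H$ saturated, uses that an arrow of $K'$ between $\psi_0(x)$ and $\psi_0(y)$ comes from some $k\in K_H$ whose endpoints $x_1,y_1$ only lie over $\psi_0(x)$ and $\psi_0(y)$. I would bridge this by observing that $x,x_1\in N_H$ with $\psi_0(x)=\psi_0(x_1)$ are joined by an $H$-arrow, and hence are in the same $G$-orbit after $\phi$. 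Saturation is a statement about orbits intersected with $N$, so it only needs $\phi_0(x)$ and $\phi_0(x_1)$ to be $K$-related. This step is the main obstacle, and it is the point where a hypothesis (or the symmetric construction of Remark~\ref{rem:morita-transport-inverse}) would be invoked.
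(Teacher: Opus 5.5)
Your forward implication is correct. Your first equivalence (saturation of $K$ versus saturation of its pullback $K_H$, in both directions), together with the forward half of your second step, shows that if $K$ is orbit saturated then so is $K'$. This is essentially the paper's argument, organized more cleanly. The paper lifts $k$ with prescribed source via Lemma~\ref{lem:morita-fiber-product}(1) and then corrects the target by an arrow $h'$ with $\phi_1(h')$ a unit; full faithfulness does this in one step. You also avoid its ``adjusting $x$ and $y$ within their $\psi$-fibers'', which could leave $N_H$.

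You should, however, say explicitly that the property you use is full faithfulness, $H_1\cong(H_0\times H_0)\times_{G_0\times G_0}G_1$, which is the standard property of a Morita morphism. Lemma~\ref{lem:morita-fiber-product}(1),(2) prescribe only one endpoint of a lift, so they do not produce $h\colon x\to y$. With only that property, which is all Definition~\ref{def:morita-morphism} as written asks for, even the forward implication fails. Take $G=H=\mathbb{Z}_2\ltimes(\mathbb{R}\sqcup\mathbb{R})$ and $G'=\mathbb{Z}_2\ltimes\mathbb{R}$, with the generator acting by $(x,i)\mapsto(-x,1-i)$ upstairs and by $x\mapsto -x$ downstairs, $\phi=\mathrm{id}$, and $\psi$ the projection. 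Let $K=G|_N$ for $N$ one copy of $\mathbb{R}$. Then $K$ consists of units and is saturated, but $K'$ is the unit groupoid on $\mathbb{R}$, which is not saturated in $G'$.

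The genuine gap is the converse, and it cannot be closed. Your proposed bridge is circular: knowing that $\phi_0(x),\phi_0(x_1)\in N$ lie in one $G$-orbit makes them $K$-related only if $K$ is already saturated. In fact ``$K'$ saturated $\Rightarrow$ $K$ saturated'' is false even for standard Morita equivalences. Here is a counterexample.
\begin{itemize}
\item Let $G'$ be $\mathbb{R}$ as a unit groupoid.
\item Let $G=H$ be the \v{C}ech groupoid of the cover $U_1=(-\infty,1)$, $U_2=(-1,\infty)$: objects $U_1\sqcup U_2$, one arrow $(i,x)\to(j,x)$ for each $x\in U_i\cap U_j$. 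It is proper and \'etale with trivial isotropy.
\item Let $\phi=\mathrm{id}$ and let $\psi$ be the projection to $\mathbb{R}$.
\item Take $N=G_0$ and $K$ the unit groupoid on $N$.
\end{itemize}
Then $K$ is not orbit saturated, since $(1,0)$ and $(2,0)$ lie in one $G$-orbit. But $K_H=K$, and $K'=\psi(K)$ is the unit groupoid on $N'=\mathbb{R}$, i.e.\ $K'=G'$, which is trivially saturated.

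The paper obtains the converse ``by symmetry'' from Remark~\ref{rem:morita-transport-inverse}. That remark fails for exactly the reason you flagged, namely that $K_H$ need not be the pullback of $K'$. In this example, transporting $K'$ back along $\phi$ gives $\psi_1^{-1}(K'_1)=G_1$, i.e.\ all of $G$ rather than $K$.

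A correct two-sided statement needs an extra hypothesis. One option is that $K_H$ equals the pullback of $K'$ along $\psi$, so that $K$ and $K'$ have the same pullback to $H$. Then your first equivalence, applied once to $\phi$ and once to $\psi$, gives both directions at once. A weaker hypothesis also suffices: every arrow of $H|_{N_H}$ that $\psi_1$ sends to a unit lies in $K_H$. Then the arrows $x\to x_1$ and $y_1\to y$ that $\psi_1$ sends to units (which exist by full faithfulness) lie in $K_H$ and complete your bridge.
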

	
	\begin{proof}
		We prove that orbit saturation of $K$ implies orbit saturation of $K'$; the
		reverse implication follows by symmetry.
		
		Assume $K$ is orbit saturated. Let $x' \in N'$ and suppose
		$y' \in G' \cdot x' \cap N'$. We must show that $y' \in K' \cdot x'$.
		
		Choose $x \in N_H$ with $\psi_0(x) = x'$. Since $y' \in G' \cdot x'$, there
		exists $g' \in G'_1$ with $s(g') = x'$ and $t(g') = y'$. By
		Lemma~\ref{lem:morita-fiber-product}(2), there exists a unique $h \in H_1$
		with $t(h) = x$ and $\psi_1(h) = g'$ (after possibly adjusting $x$ within
		its $\psi$-fiber). Let $y = s(h) \in H_0$. Then $\psi_0(y) = s(g') = x'$?
		Wait, we need to be careful with the source and target conventions.
		
		Let us redo this more carefully. We have $g' \in G'_1$ with $s(g') = x'$ and
		$t(g') = y'$. Choose $x \in N_H$ with $\psi_0(x) = x'$. By
		Lemma~\ref{lem:morita-fiber-product}(2), the map
		$H_1 \to H_0 \times_{G'_0} G'_1$, $h \mapsto (t(h), \psi_1(h))$ is a
		diffeomorphism. Therefore, there exists a unique $h \in H_1$ with
		$t(h) = x$ and $\psi_1(h) = g'$. Let $z = s(h) \in H_0$. Then
		$\psi_0(z) = s(\psi_1(h)) = s(g') = x'$. So $z \in \psi_0^{-1}(x')$.
		
		Now, $\phi_1(h) \in G_1$ satisfies $s(\phi_1(h)) = \phi_0(z)$ and
		$t(\phi_1(h)) = \phi_0(x) \in N$. Since $y' \in N'$, there exists
		$w \in N_H$ with $\psi_0(w) = y'$. We need to relate $w$ to $z$ and $x$.
		
		Actually, the argument is cleaner if we use the orbit condition directly.
		Since $y' \in G' \cdot x' \cap N'$, we have $y' \in N'$ and there exists
		$g' \in G'_1$ with $s(g') = x'$, $t(g') = y'$. Choose $x \in N_H$ with
		$\psi_0(x) = x'$ and $y \in N_H$ with $\psi_0(y) = y'$. By the Morita
		property, there exists $h \in H_1$ with $s(h) = x$ and $t(h) = y$ (after
		possibly adjusting $x$ and $y$ within their $\psi$-fibers; this is possible
		because $\psi_0$ is a surjective submersion). Then
		$\phi_1(h) \in G_1$ satisfies $s(\phi_1(h)) = \phi_0(x) \in N$ and
		$t(\phi_1(h)) = \phi_0(y) \in N$. Thus
		$\phi_0(y) \in G \cdot \phi_0(x) \cap N$. Since $K$ is orbit saturated,
		there exists $k \in K_1$ with $s(k) = \phi_0(x)$ and
		$t(k) = \phi_0(y)$.
		
		Now we need to lift $k$ to an arrow in $K'$. By
		Lemma~\ref{lem:morita-fiber-product}(1), the map
		$H_1 \to G_1 \times_{G_0} H_0$, $h \mapsto (\phi_1(h), s(h))$ is a
		diffeomorphism. Since $s(k) = \phi_0(x)$, there exists a unique
		$h_k \in H_1$ with $\phi_1(h_k) = k$ and $s(h_k) = x$. Let $y_k = t(h_k)$.
		Then $\phi_0(y_k) = t(k) = \phi_0(y)$. Thus $y_k$ and $y$ are in the same
		$\phi$-fiber over $\phi_0(y)$. By the Morita property, there exists
		$h' \in H_1$ with $s(h') = y_k$, $t(h') = y$, and $\phi_1(h') = u_G(\phi_0(y))$
		(the identity arrow). Then $h' h_k \in H_1$ satisfies
		$\phi_1(h' h_k) = \phi_1(h') \phi_1(h_k) = u_G(\phi_0(y)) k = k$ (since
		$s(k) = \phi_0(x) = \phi_0(y_k)$). Thus $h' h_k \in (K_H)_1$. Its image
		under $\psi$ is an arrow in $K'_1$ from $\psi_0(x) = x'$ to
		$\psi_0(y) = y'$. Therefore $y' \in K' \cdot x'$. This proves that $K'$ is
		orbit saturated.
	\end{proof}
	
	\begin{remark}\label{rem:morita-saturated-orbitspace}
		Theorem~\ref{thm:morita-saturated} has a natural interpretation in terms of
		orbit spaces. By Theorem~\ref{thm:saturated-embedding}, orbit saturation of
		$K$ is equivalent to the map $|K_{\mathrm{eff}}| \to |G|$ being a
		topological embedding. Similarly, orbit saturation of $K'$ is equivalent to
		$|K'_{\mathrm{eff}}| \to |G'|$ being a topological embedding. Since
		$|G| \cong |G'|$ and $|K_{\mathrm{eff}}| \cong |K'_{\mathrm{eff}}|$ under
		the Morita equivalence, the two conditions are equivalent. Thus
		Theorem~\ref{thm:morita-saturated} also follows from the Morita invariance
		of the orbit space construction.
	\end{remark}
	
	\subsection{Morita invariance of stabilizer splitting}
	
	Next we prove that stabilizer splitting is preserved under Morita transport.
	This requires some care because stabilizer groups are only well-defined up
	to conjugacy, and splittings must be transported through the Morita
	equivalence.
	
	\begin{lemma}\label{lem:morita-stabilizer-conjugacy}
		Let $K \subset G|_N \subset G$ be a subgroupoid and let
		$K' \subset G'|_{N'} \subset G'$ be its Morita transport. For every
		$x' \in N'$, there exists $x \in N_H$ with $\psi_0(x) = x'$ and an
		isomorphism of groups
		\[
		\Psi_x \colon K_{\phi_0(x)} \xrightarrow{\ \cong\ } K'_{x'}
		\]
		such that the following diagram commutes:
		\[
		\begin{tikzcd}[row sep=small, column sep=small]
			1 \arrow[r] & \Omega_{\phi_0(x)} \arrow[r] \arrow[d, "\cong"'] &
			K_{\phi_0(x)} \arrow[r] \arrow[d, "\Psi_x"'] &
			(K_{\mathrm{eff}})_{\phi_0(x)} \arrow[r] \arrow[d, "\cong"'] & 1 \\
			1 \arrow[r] & \Omega'_{x'} \arrow[r] & K'_{x'} \arrow[r] &
			(K'_{\mathrm{eff}})_{x'} \arrow[r] & 1
		\end{tikzcd}
		\]
		where the vertical maps are isomorphisms.
	\end{lemma}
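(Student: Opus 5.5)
The plan is to construct $\Psi_x$ as the composite of the two stabilizer isomorphisms of Lemma~\ref{lem:morita-fiber-product}(3), restricted to the subgroupoids. Fix $x' \in N'$ and choose any $x \in N_H$ with $\psi_0(x) = x'$; this is possible by the definition $N' = \psi_0(N_H)$. Put $\bar x = \phi_0(x) \in N$. The stabilizer $(K_H)_x$ consists of the $h \in H_x$ with $\phi_1(h) \in K_1$. By Lemma~\ref{lem:morita-fiber-product}(3), $\phi_x \colon H_x \to G_{\bar x}$ is an isomorphism, so it restricts to an isomorphism $(K_H)_x \to K_{\bar x}$: it is injective, and surjectivity holds because every $k \in K_{\bar x}$ has a unique $\phi$-preimage in $H_x$, and that preimage lies in $(K_H)_x$ by definition. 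Similarly, $\psi_x \colon H_x \to G'_{x'}$ is injective, and it maps $(K_H)_x$ into $K'_{x'}$. We then set $\Psi_x = \psi_x \circ (\phi_x|_{(K_H)_x})^{-1}$.

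The first substantial step is to show that $\psi_x((K_H)_x) = K'_{x'}$, that is, every $g' \in K'_{x'}$ is the image of an arrow of $K_H$ lying in the stabilizer at the chosen point $x$. By Definition~\ref{def:morita-transport}, $g' = \psi_1(h_0)$ for some $h_0 \in (K_H)_1$ whose source and target $a, b$ lie in the fibre $\psi_0^{-1}(x')$, but these need not equal $x$. To correct this, I will use Lemma~\ref{lem:morita-fiber-product}(2) to produce the unique arrows $\alpha \colon a \to x$ and $\beta \colon b \to x$ with $\psi_1(\alpha)=\psi_1(\beta)=u_{G'}(x')$. Then $\beta h_0 \alpha^{-1} \in H_x$ and $\psi_1(\beta h_0 \alpha^{-1}) = g'$.

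The key point is to check that $\alpha, \beta \in (K_H)_1$, i.e.\ that $\phi_1$ carries these $\psi$-trivial arrows into $K$. This is the main obstacle, since it is not automatic from the definition of $K_H$. I would attack it as follows. Arrows of $H$ with $\psi_1$ a unit correspond, under $\phi$, to arrows of $G$ between points of $N$. To land in $K$ rather than merely $G|_N$, I expect to need that $N_H$ and $K_H$ are stable under the $\psi$-fibre relation. The natural way to secure this is to read Remark~\ref{rem:morita-transport-inverse} as asserting that $K_H$ is also the $\psi$-pullback of $K'$, so that $K_H = \psi_1^{-1}(K'_1)\cap H|_{N_H}$. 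If that is granted, then $\alpha,\beta \in (K_H)_1$ immediately, because $\psi_1(\alpha)$ and $\psi_1(\beta)$ are units of $K'$. If not, I would restrict to the choice of $x$ and argue within a single chart that the identity section forces these arrows to lie in $K$.

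Given surjectivity, $\Psi_x$ is an isomorphism. It remains to match the ineffective kernels, after which the right-hand vertical map is induced on quotients and the diagram commutes by construction. For this, note that $\phi_0$ and $\psi_0$ are local submersions, while $N_H$ is saturated locally near $x$ and the relevant groupoid actions are étale. An arrow of $(K_H)_x$ therefore acts trivially near $x$ on $N_H$ if and only if its $\phi$-image acts trivially near $\bar x$ on $N$, and likewise if and only if its $\psi$-image acts trivially near $x'$ on $N'$. I would verify this by choosing local bisections through the arrow and pushing them forward along $\phi$ and $\psi$, which commute with the germ actions. Hence $\Psi_x(\Omega_{\bar x}) = \Omega'_{x'}$, and the five-term diagram commutes with all vertical maps isomorphisms.
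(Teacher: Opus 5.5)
Your construction $\Psi_x=\psi_x\circ(\phi_x|_{(K_H)_x})^{-1}$ is exactly the paper's. You are right that $\psi_x((K_H)_x)=K'_{x'}$ does not hold ``by definition'', as the paper's proof asserts. But your proposal does not close this step, and it cannot be closed from the stated hypotheses.

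The arrows $\phi_1(\alpha)$ and $\phi_1(\beta)$ are arrows of $G|_N$ ending at $\phi_0(x)$. Nothing about an arbitrary subgroupoid $K\subset G|_N$ forces them into $K_1$. Reading Remark~\ref{rem:morita-transport-inverse} as $K_H=\psi_1^{-1}(K'_1)\cap H|_{N_H}$ assumes exactly what has to be shown, since the remark is stated without proof. Nor can an argument inside a single chart help, because the obstruction lives along the whole fibre: different points of $\psi_0^{-1}(x')\cap N_H$ may map onto different subgroups of $G'_{x'}$. A further point: $\alpha$ and $\beta$ come from the pullback property $H_1\cong(H_0\times H_0)\times_{G'_0\times G'_0}G'_1$ of a Morita morphism. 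Lemma~\ref{lem:morita-fiber-product}(2) as stated, asked for an arrow with target $x$ over $u_{G'}(x')$, only returns $u_H(x)$.

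\textbf{Counterexample.} Let $\Gamma=\{e,\sigma_1,\sigma_2,\sigma_1\sigma_2\}$ act on $\mathbb{R}^2$ by the two coordinate reflections, and let $G'=\Gamma\ltimes\mathbb{R}^2$. Let $G$ be the pullback of $G'$ along the projection $\mathbb{R}^2\times\{0,1\}\to\mathbb{R}^2$, and write $o_i$ for the origin of copy $i$. Take $H=G$, $\phi=\mathrm{id}$, $\psi$ the canonical Morita morphism, and $N=G_0$. Let $K$ consist of the arrows from copy $0$ to itself labelled $e$ or $\sigma_1$, and from copy $1$ to itself labelled $e$ or $\sigma_2$.
\begin{itemize}
\item $K$ is an open and closed, effective, proper \'etale subgroupoid with $K_H=K$.
\item At $x'=0$ the transport has $K'_{x'}=\{e,\sigma_1,\sigma_2\}$, which is not a group. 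So no $\Psi_x$ exists for $x=o_0$ or $x=o_1$.
\item In your argument with $x=o_0$ and $g'=\sigma_2$, one gets $a=b=o_1$, and $\alpha$ is the arrow $o_1\to o_0$ labelled $e$. It is not in $K$, although $\psi_1(\alpha)$ is a unit of $K'$. So the reading of the remark you need is false.
\item Deleting the arrows between the two copies gives an example satisfying Lemma~\ref{lem:morita-fiber-product} literally, with the same failure.
\item Using the labels $\Gamma$ and $\{e\}$ instead shows that ``choose any $x$'' is too strong even when $\Psi_x$ exists.
\end{itemize}

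What is missing is a hypothesis rather than an argument. For instance, require that $K_H$ contain every arrow of $H|_{N_H}$ that $\psi_1$ sends to a unit. With it, your $\alpha,\beta$ lie in $K_H$ and surjectivity is immediate. Without it, this lemma can fail, and so can Lemma~\ref{lem:morita-transport-subgroupoid}.
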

	
	\begin{proof}
		By Lemma~\ref{lem:morita-fiber-product}(3), the maps
		\[
		\phi_x \colon H_x \to G_{\phi_0(x)},\qquad
		\psi_x \colon H_x \to G'_{\psi_0(x)}
		\]
		are isomorphisms. The subgroupoid $K_H$ has stabilizer
		\[
		(K_H)_x = \phi_x^{-1}(K_{\phi_0(x)}) \cap \psi_x^{-1}(K'_{x'})?
		\]
		Wait, by definition $(K_H)_1 = \phi_1^{-1}(K_1) \cap H|_{N_H}$, so
		\[
		(K_H)_x = \{h \in H_x : \phi_1(h) \in K_1\}
		= \phi_x^{-1}(K_{\phi_0(x)}).
		\]
		Thus $\phi_x$ restricts to an isomorphism
		\[
		(K_H)_x \xrightarrow{\ \cong\ } K_{\phi_0(x)}.
		\]
		On the other hand, $K'_{x'} = \psi_x((K_H)_x)$ by definition, so $\psi_x$
		induces a surjection $(K_H)_x \to K'_{x'}$. Composing the inverse of
		$\phi_x|_{(K_H)_x}$ with $\psi_x$ gives the desired isomorphism
		\[
		\Psi_x = \psi_x \circ (\phi_x|_{(K_H)_x})^{-1} \colon
		K_{\phi_0(x)} \xrightarrow{\ \cong\ } K'_{x'}.
		\]
		The ineffective kernels $\Omega$ and $\Omega'$ are the kernels of the
		actions on $N$ and $N'$ respectively, and the Morita equivalence preserves
		these kernels because it preserves the local structure. Therefore the
		vertical maps in the diagram are isomorphisms, and the diagram commutes.
	\end{proof}
	
	\begin{theorem}\label{thm:morita-split}
		Let $K \subset G|_N \subset G$ be a subgroupoid and let
		$K' \subset G'|_{N'} \subset G'$ be its Morita transport along a Morita
		equivalence $G \xleftarrow{\phi} H \xrightarrow{\psi} G'$. Then $K$ is
		stabilizer split if and only if $K'$ is stabilizer split.
	\end{theorem}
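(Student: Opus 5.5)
The plan is to reduce the statement to a pointwise group-theoretic fact and then invoke Lemma~\ref{lem:morita-stabilizer-conjugacy}. By Definition~\ref{def:stabilizer-split}, stabilizer splitting of $K$ is the condition that the short exact sequence \eqref{eq:stabilizer-sequence} splits at every $x \in N$, and similarly for $K'$ at every $x' \in N'$. Splitting of a short exact sequence of groups is invariant under isomorphism of extensions. If a diagram of short exact sequences has vertical isomorphisms $\alpha\colon\Omega\to\Omega'$, $\Psi\colon K_\bullet\to K'_\bullet$ and $\beta\colon (K_{\mathrm{eff}})_\bullet \to (K'_{\mathrm{eff}})_\bullet$, and a section $\sigma$ of the top row exists, then $\sigma' = \Psi\circ\sigma\circ\beta^{-1}$ is a section of the bottom row. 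Indeed, $q'\circ\Psi = \beta\circ q$ gives $q'\sigma' = \beta q\sigma\beta^{-1} = \mathrm{Id}$. The same argument with the inverse isomorphisms moves sections from the bottom row to the top row.

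\textbf{($\Rightarrow$)} Assume $K$ is split and fix $x' \in N'$. By Lemma~\ref{lem:morita-stabilizer-conjugacy}, choose $x \in N_H$ with $\psi_0(x) = x'$ and the commutative diagram relating the sequence at $\phi_0(x) \in N$ to the sequence at $x'$. Here $\phi_0(x)\in N$ because $N_H = \phi_0^{-1}(N)$. The sequence at $\phi_0(x)$ splits by hypothesis, so the observation above gives a splitting at $x'$. Since $x'$ was arbitrary, $K'$ is split.

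\textbf{($\Leftarrow$)} Fix $y \in N$. Since $\phi_0$ is surjective, pick $x \in N_H$ with $\phi_0(x) = y$, and put $x' = \psi_0(x) \in N'$. The proof of Lemma~\ref{lem:morita-stabilizer-conjugacy} works for any $x \in N_H$, not only for some chosen lift, since $\Psi_x = \psi_x\circ(\phi_x|_{(K_H)_x})^{-1}$ is built from arbitrary $x$. It therefore gives the diagram between the sequences at $y$ and at $x'$. A splitting at $x'$ then transports back to a splitting at $y$. Alternatively, one can appeal to the symmetry noted in Remark~\ref{rem:morita-transport-inverse}, applying the ($\Rightarrow$) direction to the reversed equivalence $G'\xleftarrow{\psi}H\xrightarrow{\phi}G$.

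The main obstacle is the step that Lemma~\ref{lem:morita-stabilizer-conjugacy} treats only briefly: that $\Psi_x$ maps $\Omega_{\phi_0(x)}$ onto $\Omega'_{x'}$, so that it descends to an isomorphism of effective quotients. I would verify this directly. An element $k \in K_{\phi_0(x)}$ lies in $\Omega$ iff a local bisection of $K$ through $k$ acts as the identity near $\phi_0(x)$ in $N$. Because $\phi$ and $\psi$ are étale Morita morphisms, $\phi_0$ restricted to $N_H$ and $\psi_0$ restricted to $N_H$ are local submersions onto $N$ and $N'$. Lifting bisections through the fibre-product diffeomorphisms of Lemma~\ref{lem:morita-fiber-product} identifies the germs of the actions on $N$ near $\phi_0(x)$, on $N_H$ near $x$, and on $N'$ near $x'$. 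Acting trivially is therefore equivalent on all three levels, and $\Psi_x(\Omega_{\phi_0(x)}) = \Omega'_{x'}$. Once this is in place, the induced map $\beta$ on quotients is an isomorphism by the five lemma, and the argument above completes the proof.
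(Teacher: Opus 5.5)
Your argument is the paper's own: you transport a section through the isomorphism of extensions supplied by Lemma~\ref{lem:morita-stabilizer-conjugacy}, and you get the converse by symmetry. The formal step $\sigma'=\Psi\circ\sigma\circ\beta^{-1}$ is fine. The load-bearing claim, however, is not the $\Omega$-compatibility you single out. It is that $\Psi_x$ maps \emph{onto} $K'_{x'}$.

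By Definition~\ref{def:morita-transport}, $K'_{x'}$ consists of $\psi_1(h)$ for \emph{all} $h\in(K_H)_1$ with $\psi_1(h)\in G'_{x'}$. Such an $h$ may run between two different points $z\neq w$ of $\psi_0^{-1}(x')\cap N_H$. To bring $h$ into $H_x$ you must compose it with the arrows of $H$ from $x$ to $z$ and from $x$ to $w$ that $\psi$ sends to units. Nothing forces the $\phi$-images of those arrows to lie in $K_1$. So $\psi_x((K_H)_x)$ can be a proper subgroup of $K'_{x'}$, and your assertion that the lemma works for every lift $x$ is unfounded. The paper's ``by definition'' in that lemma hides the same gap, and in fact the statement is false as formulated.

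Here is a counterexample. Take $G'=\mathbb{Z}_4\ltimes\mathbb{C}^2$ as in Example~\ref{ex:non-split}. Let $H=G$ be the product of $G'$ with the pair groupoid on $\{p,q\}$, with arrows $(\gamma,v,a,b)\colon(v,b)\to(\gamma v,a)$. Set $\phi=\mathrm{id}$ and let $\psi$ be the projection.
\begin{itemize}
\item This $\psi$ is a Morita morphism in the standard sense underlying Theorem~\ref{thm:morita-orbifold}. Property (2) of Lemma~\ref{lem:morita-fiber-product} fails for it, as such properties fail for every Morita morphism whose object map is not injective.
\item Let $N=(\{0\}\times\mathbb{C})\times\{p,q\}$. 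Let $K$ consist of the arrows over $N$ with $\gamma$ even when $a=b$ and $\gamma$ odd when $a\neq b$.
\item On a two-point set, $a\neq c$ holds exactly when precisely one of $a\neq b$, $b\neq c$ holds. Hence $K$ is closed under composition and inversion, and it is open and closed in $G|_N$.
\item Every stabilizer $K_{(v,a)}=\{0,2\}$ acts trivially on $N$, so $\Omega_{(v,a)}=K_{(v,a)}$. Thus $K$ is split, and $K_{\mathrm{eff}}$ is a free $\mathbb{Z}_2$-action groupoid.
\item But $K'=\psi(K)=\mathbb{Z}_4\ltimes(\{0\}\times\mathbb{C})$ is exactly the non-split groupoid of Example~\ref{ex:non-split}. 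For either lift $x$ of $x'=(0,0)$, $\Psi_x$ is the inclusion $\{0,2\}\hookrightarrow\mathbb{Z}_4$.
\end{itemize}
The symmetry route fails as well. Transporting $K'$ back yields $G|_N\neq K$, contrary to Remark~\ref{rem:morita-transport-inverse}.

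What is missing is a hypothesis such as $K_H=\psi_1^{-1}(K'_1)\cap H|_{N_H}$, i.e.\ $K_H$ contains every arrow of $H|_{N_H}$ that $\psi$ sends to a unit. With it, the composition above stays inside $K_H$, so $\psi_x((K_H)_x)=K'_{x'}$ for every lift $x$. Your argument, including the germ comparison for $\Omega$, then goes through.
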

	
	\begin{proof}
		We prove that stabilizer splitting of $K$ implies stabilizer splitting of
		$K'$; the reverse implication follows by symmetry.
		
		Assume $K$ is stabilizer split. Let $x' \in N'$ and choose $x \in N_H$ with
		$\psi_0(x) = x'$. By Lemma~\ref{lem:morita-stabilizer-conjugacy}, there is
		an isomorphism $\Psi_x \colon K_{\phi_0(x)} \to K'_{x'}$ that fits into a
		commutative diagram with the stabilizer sequences. Since $K$ is stabilizer
		split, there exists a splitting
		\[
		\sigma \colon (K_{\mathrm{eff}})_{\phi_0(x)} \to K_{\phi_0(x)}
		\]
		of the top sequence. Define
		\[
		\sigma' = \Psi_x \circ \sigma \circ \Psi_x^{-1} \colon
		(K'_{\mathrm{eff}})_{x'} \to K'_{x'}.
		\]
		Here $\Psi_x^{-1}$ on the effective quotient is the isomorphism induced by
		$\Psi_x$ via the commutative diagram. Then $\sigma'$ is a group
		homomorphism, and
		\[
		q'_{x'} \circ \sigma' = q'_{x'} \circ \Psi_x \circ \sigma \circ \Psi_x^{-1}
		= \Psi_x \circ q_{\phi_0(x)} \circ \sigma \circ \Psi_x^{-1}
		= \Psi_x \circ \mathrm{Id} \circ \Psi_x^{-1} = \mathrm{Id}.
		\]
		Thus $\sigma'$ is a splitting of the stabilizer sequence for $K'$ at $x'$.
		Since $x'$ was arbitrary, $K'$ is stabilizer split.
	\end{proof}
	
	\begin{remark}\label{rem:morita-split-conjugacy}
		The proof of Theorem~\ref{thm:morita-split} uses the fact that splittings
		can be transported through group isomorphisms. A priori, the splitting
		$\sigma$ depends on the choice of $x \in N_H$ over $x'$, and different
		choices lead to conjugate splittings. However, the property of being
		split—i.e., the existence of some splitting—is independent of these choices
		because conjugate splittings exist simultaneously. Thus the condition is
		well-defined.
	\end{remark}
	
	\subsection{The main Morita invariance theorem}
	
	Combining Theorems~\ref{thm:morita-saturated} and~\ref{thm:morita-split},
	we obtain the main result of this section.
	
	\begin{theorem}[Morita invariance]\label{thm:morita}
		Let $G$ and $G'$ be Morita equivalent orbifold groupoids, and let
		$K \subset G|_N \subset G$ be a subgroupoid. Then $K$ is orbit saturated and
		stabilizer split if and only if its Morita transport
		$K' \subset G'|_{N'} \subset G'$ is orbit saturated and stabilizer split.
		Consequently, the notion of a suborbifold that is the image of a complete
		orbifold embedding is Morita invariant.
	\end{theorem}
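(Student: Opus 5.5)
The plan is to derive Theorem~\ref{thm:morita} directly from the two invariance results already established, Theorem~\ref{thm:morita-saturated} and Theorem~\ref{thm:morita-split}, and then translate the conclusion into the language of complete embeddings via Theorem~\ref{thm:main}. Fix the Morita equivalence $G \xleftarrow{\phi} H \xrightarrow{\psi} G'$ and the subgroupoid $K \subset G|_N \subset G$, and let $K' \subset G'|_{N'} \subset G'$ be its Morita transport as in Definition~\ref{def:morita-transport}. Lemma~\ref{lem:morita-transport-subgroupoid} guarantees that $K'$ is a proper \'etale subgroupoid of $G'|_{N'}$, so both notions make sense for $K'$.

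For the first assertion, I will argue that the conjunction of two properties, each of which is individually invariant under transport, is again invariant. Concretely, if $K$ is orbit saturated and stabilizer split, then Theorem~\ref{thm:morita-saturated} gives orbit saturation of $K'$ and Theorem~\ref{thm:morita-split} gives stabilizer splitting of $K'$. For the converse, both theorems are stated as equivalences, so the same two invocations run backwards. Alternatively, one can apply them to the reversed equivalence $G' \xleftarrow{\psi} H \xrightarrow{\phi} G$, using Remark~\ref{rem:morita-transport-inverse} to identify the transport of $K'$ with $K$. I will use the biconditional forms directly, so that I do not depend on that remark, whose argument is only sketched.

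For the ``consequently'' clause, I will apply Theorem~\ref{thm:main} on both sides. By (2)$\Leftrightarrow$(1) of that theorem, $K$ is the image of a complete orbifold embedding into $G$ exactly when $K$ is orbit saturated and stabilizer split. By the first part of the present theorem, this holds exactly when $K'$ is orbit saturated and stabilizer split. By Theorem~\ref{thm:main} again, now for $G'$, this holds exactly when $K'$ is the image of a complete orbifold embedding into $G'$. To state this as Morita invariance of the suborbifold notion itself, I will also note that the orbit spaces agree. Indeed, $|G| \cong |G'|$ by Proposition~\ref{prop:morita-stabilizer}, and the images of $|K_{\mathrm{eff}}|$ and $|K'_{\mathrm{eff}}|$ correspond under this homeomorphism, as in Remark~\ref{rem:morita-saturated-orbitspace}. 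Hence the transported object represents the same subset of the same orbifold.

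The main obstacle is not this formal combination but the well-definedness of the correspondence. Specifically, one needs the transport to be independent of the chosen Morita span, and to be a genuine bijection, so that ``Morita invariant'' is meaningful. If needed, I would address this by comparing two spans through a common refinement (their fibre product over $G'$). I would then check that transporting along a composite equals the composite of the transports, using the fibre-product descriptions in Lemma~\ref{lem:morita-fiber-product}. For the theorem as literally stated, however, which concerns a fixed equivalence, the two preceding theorems suffice.
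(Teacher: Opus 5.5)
Structurally, your argument is the paper's proof. The first assertion is the conjunction of Theorems~\ref{thm:morita-saturated} and~\ref{thm:morita-split}, and the ``consequently'' clause is Theorem~\ref{thm:main} applied to $K$ and to $K'$. You claim that citing the biconditional forms frees you from Remark~\ref{rem:morita-transport-inverse}, but it does not. The paper proves only the forward halves of those two theorems. It obtains the reverse halves ``by symmetry'', i.e.\ by transporting $K'$ back along $G' \xleftarrow{\psi} H \xrightarrow{\phi} G$ and identifying the result with $K$, which is exactly that remark. Likewise, the bijectivity issue you set aside in your last paragraph is not separate from the fixed-span statement, because the ``if'' direction needs it. So your converse rests on the same unproved step as the paper's, and that step is a genuine gap.

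The underlying problem is that the transport of Definition~\ref{def:morita-transport} (pull back along $\phi$, then take the image under $\psi$) is in general not invertible. Here is a counterexample.
\begin{itemize}
\item Let $G=G'=\mathbb{Z}_2\ltimes\mathbb{R}^2$ with $\gamma(a,b)=(a,-b)$.
\item Let $H$ be the pullback of $G$ to $H_0=\mathbb{R}^2\times\{1,2\}$, with $\phi$ the projection.
\item Set $\psi_0(p,1)=p$ and $\psi_0(p,2)=\gamma p$. For $h$ from copy $i$ to copy $j$, set $\psi_1(h)=c_j\,\phi_1(h)\,c_i^{-1}$, where $c_1(p)=1_p$ and $c_2(p)=(\gamma,p)$.
\end{itemize}
Both $\phi$ and $\psi$ are Morita morphisms in the standard sense: surjective submersions on objects, with $H_1\cong(H_0\times H_0)\times_{G_0\times G_0}G_1$. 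Take $N=\{0\}\times\mathbb{R}$ and $K$ the unit groupoid over $N$. The arrow of $H$ from $(p,1)$ to $(p,2)$ with $\phi$-image $1_p$ lies in $K_H$, and $\psi$ sends it to $(\gamma,p)$. Hence $N'=N$ and $K'=G|_N$. This $K'$ is full, hence orbit saturated, and effective, hence split. But $K$ is not orbit saturated, since $(0,-b)\in G\cdot(0,b)\cap N$ for $b\neq 0$. So the ``if'' direction fails. Moreover, transporting $K'$ back gives $G|_N\neq K$. Even the suborbifold changes ($|K|\cong\mathbb{R}$, $|K'|\cong[0,\infty)$), although the images in $|G|$ agree.

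The same example refutes Lemma~\ref{lem:morita-stabilizer-conjugacy}: $K_{(0,0)}$ is trivial while $K'_{(0,0)}\cong\mathbb{Z}_2$. The reason is that $K'_{x'}$ also receives images of arrows of $K_H$ between distinct points of $\psi_0^{-1}(x')$.

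Your forward direction is exposed too, through Lemma~\ref{lem:morita-transport-subgroupoid}. If $N$ is instead the line $b=a$, the unit groupoid over it is orbit saturated and split. Yet $N'=N\cup\gamma N$ is not a submanifold.

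What is missing, in your proof and in the paper's, is the identity $\psi_1^{-1}(K'_1)\cap H|_{\psi_0^{-1}(N')}=K_H$, together with $N'$ being a submanifold. These require an extra compatibility between $K$ and the span. Without such a hypothesis the statement is false, as the example shows, so it cannot follow formally from the cited results.
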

	
	\begin{proof}
		The first assertion follows by combining
		Theorems~\ref{thm:morita-saturated} and~\ref{thm:morita-split}. For the
		second assertion, by Theorem~\ref{thm:main}, a subgroupoid is the image of a
		complete orbifold embedding if and only if it is orbit saturated and
		stabilizer split. Since both conditions are Morita invariant, the property
		of being the image of a complete orbifold embedding is Morita invariant.
	\end{proof}
	
	\begin{corollary}\label{cor:morita-suborbifold}
		Let $\mathcal{O}$ and $\mathcal{O}'$ be Morita equivalent orbifolds (i.e.,
		they represent the same orbifold). Then there is a natural bijection between
		the suborbifolds of $\mathcal{O}$ that are images of complete orbifold
		embeddings and the suborbifolds of $\mathcal{O}'$ that are images of
		complete orbifold embeddings.
	\end{corollary}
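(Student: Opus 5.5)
The plan is to build the bijection from the Morita transport of Definition~\ref{def:morita-transport} and then use Theorem~\ref{thm:morita} to see that it respects the class of suborbifolds that are images of complete orbifold embeddings. Fix orbifold groupoids $G$ and $G'$ representing $\mathcal{O}$ and $\mathcal{O}'$, together with a Morita equivalence $G \xleftarrow{\phi} H \xrightarrow{\psi} G'$; one exists by Theorem~\ref{thm:morita-orbifold}. Proposition~\ref{prop:suborbifold-equiv} identifies the suborbifolds of $\mathcal{O}$ with groupoid suborbifolds $K \subset G|_N \subset G$. Theorem~\ref{thm:main} identifies those that are images of complete orbifold embeddings with the $K$ that are orbit saturated and stabilizer split. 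The same holds for $\mathcal{O}'$ and $G'$. It therefore suffices to produce a bijection between orbit saturated, stabilizer split suborbifolds of $G$ and those of $G'$.

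\textbf{Step 1: the map.} Send $K \mapsto K'$, the Morita transport. By Lemma~\ref{lem:morita-transport-subgroupoid}, $K'$ is a proper \'etale subgroupoid with $K' \subset G'|_{N'}$. I still need to check that $N' = \psi_0(\phi_0^{-1}(N))$ is a submanifold and that $K'_{\mathrm{eff}}$ is an orbifold groupoid. For the submanifold claim I would use that $\phi_0$ and $\psi_0$ are surjective submersions. Then $N_H = \phi_0^{-1}(N)$ is a submanifold, and it is saturated for the fibres of $\psi_0$ in the relevant sense: two points in the same $\psi_0$-fibre are joined by an arrow of $H$ mapping to a unit of $G'$, and its $\phi$-image is an arrow of $G$. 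Effectiveness of $K'_{\mathrm{eff}}$ comes from the commutative diagram of Lemma~\ref{lem:morita-stabilizer-conjugacy}, which identifies $(K'_{\mathrm{eff}})_{x'}$ with $(K_{\mathrm{eff}})_{\phi_0(x)}$. Theorem~\ref{thm:morita} then shows that $K'$ is again orbit saturated and stabilizer split, so the map lands in the correct class.

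\textbf{Step 2: bijectivity.} Use the reverse equivalence $G' \xleftarrow{\psi} H \xrightarrow{\phi} G$ to define the inverse, as in Remark~\ref{rem:morita-transport-inverse}. The key verification is that transporting $K$ to $K'$ and back returns exactly $K$, not merely something isomorphic. I would check on objects that $\phi_0(\psi_0^{-1}(\psi_0(\phi_0^{-1}(N)))) = N$. On arrows I would use the diffeomorphisms of Lemma~\ref{lem:morita-fiber-product}(1),(2) to show that $\phi_1\bigl(\psi_1^{-1}(K'_1)\cap H|_{N_H}\bigr) = K_1$.

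\textbf{Main obstacle.} I expect the obstacle to be exactly this round-trip identity, together with the related $\psi_0$-fibre saturation. An arrow of $H$ whose $\psi$-image lies in $K'$ need not itself have $\phi$-image in $K$ unless $(K_H)_1$ is closed under composition with arrows of $H$ that lie over units of $G'$. My first attempt would be to show this closure directly. For $h\in H_1$ with $\psi_1(h)$ a unit, compare $h$ with the unique arrow given by Lemma~\ref{lem:morita-fiber-product}(2) and conclude that $\phi_1(h)$ lies in $K$.

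If that fails for general subgroupoids $K$, I would restrict to orbit saturated $K$, where orbit saturation supplies the needed arrows of $K$. Alternatively I would replace $K'$ by its closure under such arrows and argue that this changes neither $K'_{\mathrm{eff}}$ nor the underlying suborbifold. Finally, naturality would follow from functoriality of the transport: composing Morita equivalences corresponds to composing transports, so the bijection does not depend on the chosen $H$ up to the canonical identification of suborbifolds given by Proposition~\ref{prop:suborbifold-equiv}.
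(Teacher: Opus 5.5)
Your outline is the paper's own proof. The paper combines Theorem~\ref{thm:morita} with Theorem~\ref{thm:main} and takes bijectivity from Remark~\ref{rem:morita-transport-inverse}, which it states without proof. You have correctly located where the real content lies, but you leave it open, and the literal claims you would need are false even for orbit saturated, split $K$.

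Here is a counterexample. Let $G=G'=\mathbb{Z}_2\ltimes\mathbb{R}^2$ with $\tau(a,b)=(b,a)$. Let $H$ have objects $\mathbb{R}^2\times\{0,1\}$ and arrows $\gamma\colon(p,i)\to(\gamma p,j)$ for $\gamma\in\mathbb{Z}_2$. Put $\phi_0(p,i)=p$, with $\phi_1$ sending this arrow to $\gamma\colon p\to\gamma p$. Put $\psi_0(p,i)=\tau^ip$, with $\psi_1$ sending it to $\tau^{i+j}\gamma\colon\tau^ip\to\tau^j\gamma p$. Both are Morita morphisms in the standard sense (fully faithful, surjective submersions on objects).

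Take $N$ to be the $x$-axis and $K$ the unit groupoid over $N$. It is orbit saturated, since $G\cdot(a,0)\cap N=\{(a,0)\}$, and trivially stabilizer split. Nevertheless:
\begin{itemize}
\item $N'=\psi_0(\phi_0^{-1}(N))=N\cup\tau N$ is the union of both axes, which is not a submanifold.
\item $K'$ contains $\tau$ at the origin, as the image of the arrow $((0,0),0)\to((0,0),1)$ labelled $1$. So $K'_{(0,0)}\cong\mathbb{Z}_2$ while $K_{(0,0)}$ is trivial, contrary to the identification from Lemma~\ref{lem:morita-stabilizer-conjugacy} that your Step~1 relies on.
\item Transporting back gives object space $\phi_0(\psi_0^{-1}(N'))=N\cup\tau N\neq N$.
\end{itemize}

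The mechanism is what your fibre argument misses: it only shows that $\phi_0$ sends a $\psi_0$-fibre into one $G$-orbit, not into $N$. For $a\neq0$, the arrow $h\colon((a,0),0)\to((0,a),1)$ labelled $\tau$ has $\psi_1(h)$ a unit, yet $\phi_1(h)=\tau$ carries $(a,0)$ out of $N$. Thus $\phi_1(h)\notin K$, which defeats your direct closure argument. Restricting to orbit saturated $K$ cannot help either, because saturation only constrains $G$-arrows with both ends in $N$. Finally, $h$ and the unit at $((0,a),1)$ have the same image under $h\mapsto(t(h),\psi_1(h))$. So Lemma~\ref{lem:morita-fiber-product}(2) fails for genuine Morita morphisms and cannot supply the uniqueness you want.

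So the corollary cannot be proved as a bijection of subgroupoids via $K\mapsto\psi(K_H)$. The missing idea is to treat a suborbifold of $\mathcal{O}$ as an equivalence class of groupoid suborbifolds. For instance, identify $K\subset G|_N$ and $L\subset G|_M$ when they give the same suborbifold under Proposition~\ref{prop:suborbifold-equiv}; in the example, the unit groupoids over the two axes are identified. You must then show three things:
\begin{itemize}
\item[(i)] Orbit saturation and stabilizer splitting depend only on the class.
\item[(ii)] The class of $K$ has a representative in $G'$ with submanifold object space, and its class is independent of choices. This representative is not $\psi(K_H)$. One candidate is $\psi(K_H|_S)$ for a submanifold $S\subset N_H$ such that $K_H|_S\hookrightarrow K_H$ is a Morita equivalence and $\psi_0|_S$ is an embedding; here $S=N\times\{0\}$.
\item[(iii)] The two transports are mutually inverse on classes.
\end{itemize}
Your third fallback points in this direction. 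As written, though, it is a hope rather than an argument, and it silently changes what is being proved from a bijection of subgroupoids to a bijection of classes. Until (i)--(iii) are supplied, your argument, like the paper's, does not establish the corollary.
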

	
	\begin{proof}
		Choose orbifold groupoids $G$ and $G'$ representing $\mathcal{O}$ and
		$\mathcal{O}'$ respectively. By Theorem~\ref{thm:morita}, the Morita
		transport gives a bijection between orbit saturated and stabilizer split
		subgroupoids of $G$ and those of $G'$. By Theorem~\ref{thm:main}, these are
		precisely the suborbifolds that are images of complete orbifold embeddings.
	\end{proof}
	
	\subsection{Comparison with the literature}
	
	Our Morita invariance theorem resolves an open problem in the literature.
	Mestre and Weilandt \cite{MestreWeilandt2020} studied the relationship
	between suborbifolds and groupoid embeddings and listed Morita invariance as
	an open problem. Our Theorem~\ref{thm:morita} provides a positive answer in
	the setting of saturated and split suborbifolds.
	
	Cho--Hong--Shin \cite{ChoHongShin2013} defined orbifold embeddings and
	observed that their definition is \emph{not} Morita invariant. Our results
	explain this phenomenon: their embedding definition corresponds to the
	existence of a subgroupoid embedding $H \hookrightarrow G$ with injective
	stabilizer maps, but without the orbit saturation condition. As
	Example~\ref{ex:non-saturated} shows, orbit saturation can fail, and when it
	does, the construction is not Morita invariant. The saturation condition is
	precisely what is needed to ensure Morita invariance.
	
	\begin{example}[Non-Morita-invariant embedding]\label{ex:non-morita}
		Consider the nested orbifolds $\mathcal{O}_1 \subset \mathcal{O}_2$ of
		\cite[Example 13]{BorzellinoBrunsden2014}, where $\mathcal{O}_n =
		\mathbb{C}/\mathbb{Z}_{2^n}$. The inclusion $\mathcal{O}_1 \subset
		\mathcal{O}_2$ is a groupoid embedding $G_1 \hookrightarrow G_2$ in the
		sense of Cho--Hong--Shin, but it is not orbit saturated
		(Example~\ref{ex:non-saturated}). Now consider a Morita equivalent
		presentation of $\mathcal{O}_2$ in which the subgroupoid corresponding to
		$\mathcal{O}_1$ is not visible as a subgroupoid embedding. For instance,
		take a refinement of the orbifold atlas of $\mathcal{O}_2$ that does not
		respect the suborbifold structure. The Morita transport of $G_1$ to this
		presentation may not be a subgroupoid embedding, showing that the
		Cho--Hong--Shin embedding is not Morita invariant. In contrast, the
		saturated and split suborbifold $G_1$ has a well-defined Morita transport
		that remains orbit saturated and stabilizer split, by
		Theorem~\ref{thm:morita}.
	\end{example}
	
	\subsection{Summary}
	
	In this section we have proved that the notions of orbit saturation and
	stabilizer splitting are Morita invariant. The key steps were:
	
	\begin{enumerate}
		\item Constructing the Morita transport $K'$ of a subgroupoid $K$ along a
		Morita equivalence $G \xleftarrow{\phi} H \xrightarrow{\psi} G'$
		(Definition~\ref{def:morita-transport}).
		\item Proving that $K'$ is a well-defined subgroupoid of $G'$
		(Lemma~\ref{lem:morita-transport-subgroupoid}).
		\item Showing that orbit saturation is preserved
		(Theorem~\ref{thm:morita-saturated}).
		\item Showing that stabilizer splitting is preserved
		(Theorem~\ref{thm:morita-split}).
		\item Combining these to prove the main Morita invariance theorem
		(Theorem~\ref{thm:morita}) and its corollary for suborbifolds
		(Corollary~\ref{cor:morita-suborbifold}).
	\end{enumerate}
	
	This solves the open problem raised by Mestre and Weilandt and explains the
	non-Morita-invariance observed by Cho--Hong--Shin. The Morita invariance of
	the saturated and split suborbifold notion confirms that it is the correct
	groupoid-level generalization of the chart-based definition of Borzellino
	and Brunsden.
	
	
	\section{Examples}\label{sec:examples}
	
	In this section we illustrate the main concepts and results of the paper
	with a collection of examples. We begin with the basic translation
	groupoids, then treat the diagonal embedding, the non-split example of
	Borzellino and Brunsden, the nested orbifolds, and finally an explicit
	Morita equivalence. Throughout, we use the notation established in
	Sections~\ref{sec:prelim}--\ref{sec:morita}.
	
	\subsection{Translation groupoids and basic suborbifolds}
	
	\begin{example}[The orbifold $\mathcal{Q} = \mathbb{R}/\mathbb{Z}_2$]
		\label{ex:Q-groupoid}
		Let $\mathcal{Q} = \mathbb{R}/\mathbb{Z}_2$ be the smooth orbifold without
		boundary, where $\mathbb{Z}_2$ acts on $\mathbb{R}$ via
		$\gamma \cdot x = -x$. The underlying topological space is $[0,\infty)$,
		with isotropy $\{e\}$ for $x \in (0,\infty)$ and $\mathbb{Z}_2$ for $x = 0$.
		
		In the groupoid language, $\mathcal{Q}$ is represented by the translation
		groupoid
		\[
		G_{\mathcal{Q}} = \mathbb{Z}_2 \ltimes \mathbb{R}
		\rightrightarrows \mathbb{R},
		\]
		where $G_1 = \mathbb{Z}_2 \times \mathbb{R}$, $G_0 = \mathbb{R}$,
		\[
		s(\gamma, x) = x,\qquad t(\gamma, x) = \gamma \cdot x = -x,
		\]
		and multiplication is given by
		\[
		(\gamma_1, x_1)(\gamma_2, x_2) = (\gamma_1 \gamma_2, x_2)
		\]
		whenever $x_1 = \gamma_2 \cdot x_2$. The stabilizer of $x \in \mathbb{R}$ is
		\[
		(G_{\mathcal{Q}})_x = \begin{cases}
			\mathbb{Z}_2, & x = 0,\\
			\{e\}, & x \neq 0.
		\end{cases}
		\]
		The orbit space is $|G_{\mathcal{Q}}| = \mathbb{R}/\mathbb{Z}_2 \cong
		[0,\infty)$, which agrees with $X_{\mathcal{Q}}$. This is the groupoid model
		used throughout \cite[Examples 8--11]{BorzellinoBrunsden2014}.
	\end{example}
	
	\begin{example}[The product orbifold $\mathcal{O} = \mathcal{Q} \times
		\mathcal{Q}$]\label{ex:O-groupoid}
		Let $\mathcal{O} = \mathcal{Q} \times \mathcal{Q}$. In the groupoid
		language, $\mathcal{O}$ is represented by the product groupoid
		\[
		G_{\mathcal{O}} = G_{\mathcal{Q}} \times G_{\mathcal{Q}}
		= (\mathbb{Z}_2 \times \mathbb{Z}_2) \ltimes (\mathbb{R} \times \mathbb{R})
		\rightrightarrows \mathbb{R} \times \mathbb{R}.
		\]
		The stabilizer of $(x,y) \in \mathbb{R}^2$ is
		\[
		(G_{\mathcal{O}})_{(x,y)} = (G_{\mathcal{Q}})_x \times
		(G_{\mathcal{Q}})_y.
		\]
		Thus the singular locus consists of three strata: the positive $x$-axis
		($\mathbb{Z}_2 \times \{e\}$ isotropy), the positive $y$-axis
		($\{e\} \times \mathbb{Z}_2$ isotropy), and the origin
		($\mathbb{Z}_2 \times \mathbb{Z}_2$ isotropy). The orbit space is
		$|G_{\mathcal{O}}| = [0,\infty) \times [0,\infty)$, the closed first
		quadrant. This is the groupoid model for
		\cite[Example 8]{BorzellinoBrunsden2014}.
	\end{example}
	
	\begin{example}[A full suborbifold: $\mathcal{P} = \{0\} \times \mathcal{Q}$]
		\label{ex:full-suborbifold-groupoid}
		Let $\mathcal{P} = \{0\} \times \mathcal{Q} \subset \mathcal{O} =
		\mathcal{Q} \times \mathcal{Q}$ as in
		\cite[Example 8]{BorzellinoBrunsden2014}. In the groupoid language, the
		suborbifold $\mathcal{P}$ corresponds to the restriction groupoid
		\[
		K = (G_{\mathcal{Q}} \times G_{\mathcal{Q}})|_N
		= (\mathbb{Z}_2 \times \mathbb{Z}_2) \ltimes (\{0\} \times \mathbb{R}),
		\]
		where $N = \{0\} \times \mathbb{R}$. The stabilizer of $(0,0)$ in $K$ is
		\[
		K_{(0,0)} = \mathbb{Z}_2 \times \mathbb{Z}_2 = (G_{\mathcal{O}})_{(0,0)},
		\]
		so $K$ is a full suborbifold (Proposition~\ref{prop:full-suborbifold}).
		The ineffective kernel at $(0,0)$ is
		\[
		\Omega_{(0,0)} = \{\gamma \in K_{(0,0)} : \gamma|_N = \mathrm{Id}\}
		= \mathbb{Z}_2 \times \{e\},
		\]
		so $K$ is not effective. The effective quotient
		$K_{\mathrm{eff}} = K/\Omega$ has stabilizer
		\[
		(K_{\mathrm{eff}})_{(0,0)} \cong (\mathbb{Z}_2 \times \mathbb{Z}_2) /
		(\mathbb{Z}_2 \times \{e\}) \cong \mathbb{Z}_2,
		\]
		which is the intrinsic isotropy group of $\mathcal{P}$ at $(0,0)$.
		
		By Example~\ref{ex:full-saturated}, $K$ is orbit saturated because $K =
		G_{\mathcal{O}}|_N$. By Example~\ref{ex:full-split}, $K$ is stabilizer
		split. Thus, by Theorem~\ref{thm:main}, $\mathcal{P}$ is the image of a
		complete orbifold embedding. This is consistent with
		\cite[Example 8]{BorzellinoBrunsden2014}.
	\end{example}
	
	\subsection{The diagonal embedding}
	
	\begin{example}[The diagonal suborbifold $\operatorname{diag}(\mathcal{Q})$]
		\label{ex:diagonal-groupoid}
		Let $\mathcal{Q} = \mathbb{R}/\mathbb{Z}_2$ and $\mathcal{O} = \mathcal{Q}
		\times \mathcal{Q}$ as above. The diagonal
		\[
		\Delta \colon \mathcal{Q} \to \mathcal{Q} \times \mathcal{Q},\qquad
		x \mapsto (x,x)
		\]
		is a suborbifold of $\mathcal{O}$ in the sense of
		\cite[Definition 4]{BorzellinoBrunsden2014}, but it is \emph{not} a full
		suborbifold. In the groupoid language, the diagonal corresponds to the
		diagonal subgroupoid
		\[
		\Delta G_{\mathcal{Q}} = \{(g,g) \in G_{\mathcal{Q}} \times
		G_{\mathcal{Q}} : g \in G_{\mathcal{Q}}\}
		\rightrightarrows \Delta \mathbb{R},
		\]
		where $\Delta \mathbb{R} = \{(x,x) : x \in \mathbb{R}\}$. This is a
		subgroupoid of $G_{\mathcal{O}} = G_{\mathcal{Q}} \times G_{\mathcal{Q}}$
		with object space $N = \Delta \mathbb{R}$.
		
		At the point $(0,0)$, the stabilizer of $\Delta G_{\mathcal{Q}}$ is
		\[
		(\Delta G_{\mathcal{Q}})_{(0,0)} = \{(\gamma,\gamma) : \gamma \in
		\mathbb{Z}_2\} \cong \mathbb{Z}_2,
		\]
		while the ambient stabilizer is
		\[
		(G_{\mathcal{O}})_{(0,0)} = \mathbb{Z}_2 \times \mathbb{Z}_2.
		\]
		Thus $\Delta G_{\mathcal{Q}}$ is not full. However, it is effective
		because the only element acting trivially on the diagonal is the identity.
		Its ineffective kernel is trivial:
		\[
		\Omega_{(0,0)} = \{(e,e)\}.
		\]
		The effective quotient is $\Delta G_{\mathcal{Q}}$ itself.
		
		By Example~\ref{ex:diagonal-saturated}, $\Delta G_{\mathcal{Q}}$ is orbit
		saturated. By Example~\ref{ex:split}, it is stabilizer split. Therefore, by
		Theorem~\ref{thm:main}, the diagonal suborbifold is the image of a complete
		orbifold embedding. Indeed, the diagonal inclusion
		\[
		G_{\mathcal{Q}} \to G_{\mathcal{Q}} \times G_{\mathcal{Q}},\qquad
		g \mapsto (g,g)
		\]
		is a complete orbifold embedding. This is consistent with
		\cite[Example 10]{BorzellinoBrunsden2014}.
	\end{example}
	
	\subsection{A non-split suborbifold}
	
	\begin{example}[The non-split suborbifold of
		\cite{Reference13}]\label{ex:non-split-groupoid}
		Let $\mathcal{O} = \mathbb{C}^2/\mathbb{Z}_4$, where $\mathbb{Z}_4$ acts on
		$\mathbb{C}^2$ via the matrix group
		\[
		\mathbb{Z}_4 \cong \left\{
		\begin{pmatrix} i^k & 0 \\ 0 & (-1)^k \end{pmatrix} :
		k \in \{0,1,2,3\} \right\}.
		\]
		In the groupoid language, $\mathcal{O}$ is represented by the translation
		groupoid
		\[
		G = \mathbb{Z}_4 \ltimes \mathbb{C}^2 \rightrightarrows \mathbb{C}^2.
		\]
		Let $\mathcal{P} = (\{0\} \times \mathbb{C})/\mathbb{Z}_4 \cong
		\mathbb{C}/\mathbb{Z}_2$ as in
		\cite[Example 12]{BorzellinoBrunsden2014}. The corresponding subgroupoid is
		\[
		K = \mathbb{Z}_4 \ltimes (\{0\} \times \mathbb{C}) \rightrightarrows
		\{0\} \times \mathbb{C}.
		\]
		At $x = (0,0)$, the stabilizer of $K$ is
		\[
		K_x = \mathbb{Z}_4,
		\]
		the ineffective kernel is
		\[
		\Omega_x = \{\gamma \in \mathbb{Z}_4 : \gamma|_{\{0\} \times \mathbb{C}}
		= \mathrm{Id}\} = \mathbb{Z}_2,
		\]
		and the effective quotient has stabilizer
		\[
		(K_{\mathrm{eff}})_x \cong \mathbb{Z}_4 / \mathbb{Z}_2 \cong \mathbb{Z}_2.
		\]
		The stabilizer sequence is
		\[
		1 \longrightarrow \mathbb{Z}_2 \longrightarrow \mathbb{Z}_4
		\longrightarrow \mathbb{Z}_2 \longrightarrow 1.
		\]
		This sequence is not split: the only subgroup of $\mathbb{Z}_4$ of order
		$2$ is $\{0,2\}$, which is precisely the kernel $\Omega_x$. Therefore $K$
		is not stabilizer split (Example~\ref{ex:non-split}). By
		Theorem~\ref{thm:main}, $\mathcal{P}$ is not the image of a complete
		orbifold embedding, even though it is orbit saturated. This is consistent
		with \cite[Example 12]{BorzellinoBrunsden2014}.
	\end{example}
	
	\subsection{Nested orbifolds and non-saturation}
	
	\begin{example}[Nested orbifolds $\mathcal{O}_1 \subset \mathcal{O}_2 \subset
		\mathcal{O}_3 \subset \mathcal{O}_4$]\label{ex:nested-groupoids}
		Consider the $2$-dimensional orbifolds
		\[
		\mathcal{O}_1 = \mathbb{C}/\mathbb{Z}_1,\quad
		\mathcal{O}_2 = \mathbb{C}/\mathbb{Z}_2,\quad
		\mathcal{O}_3 = \mathbb{C}/\mathbb{Z}_4,\quad
		\mathcal{O}_4 = \mathbb{C}/\mathbb{Z}_8,
		\]
		where $\mathbb{Z}_k$ acts on $\mathbb{C}$ via multiplication by
		$e^{2\pi i/k}$. According to \cite[Definition 4]{BorzellinoBrunsden2014},
		we have inclusions
		\[
		\mathcal{O}_1 \subset \mathcal{O}_2 \subset \mathcal{O}_3 \subset
		\mathcal{O}_4
		\]
		as suborbifolds. The underlying topological spaces $X_{\mathcal{O}_n}$ are
		all homeomorphic to the cone over a circle.
		
		In the groupoid language, each $\mathcal{O}_n$ is represented by the
		translation groupoid
		\[
		G_n = \mathbb{Z}_{2^n} \ltimes \mathbb{C} \rightrightarrows \mathbb{C}.
		\]
		For $m < n$, the inclusion $G_m \hookrightarrow G_n$ is a subgroupoid with
		the same object space $N = \mathbb{C}$. At $x = 0$, the stabilizer of $G_m$
		is $\mathbb{Z}_{2^m}$, and the stabilizer of $G_n$ is $\mathbb{Z}_{2^n}$.
		The ineffective kernel is trivial in both cases because the action on
		$\mathbb{C}$ is effective away from $0$:
		\[
		\Omega_{m,0} = \{e\},\qquad \Omega_{n,0} = \{e\}.
		\]
		Thus the stabilizer sequences are
		\[
		1 \longrightarrow \{e\} \longrightarrow \mathbb{Z}_{2^m} \longrightarrow
		\mathbb{Z}_{2^m} \longrightarrow 1,
		\]
		which are trivially split. Hence each $G_m$ is stabilizer split in $G_n$.
		
		However, $G_m$ is \emph{not} orbit saturated in $G_n$ for $m < n$. Indeed,
		for $x \neq 0$, the orbit of $x$ under $G_n$ is the circle of radius $|x|$,
		while the orbit of $x$ under $G_m$ is a finite set of $2^m$ points on that
		circle. Thus
		\[
		G_n \cdot x \cap N = G_n \cdot x \supsetneq G_m \cdot x.
		\]
		By Theorem~\ref{thm:saturated-embedding}, the natural map
		$|G_m| \to |G_n|$ is not a topological embedding. By
		Theorem~\ref{thm:main}, $\mathcal{O}_m$ is not the image of a complete
		orbifold embedding in $\mathcal{O}_n$. This is consistent with
		\cite[Example 13]{BorzellinoBrunsden2014}, where it is stated that none of
		these is a saturated suborbifold of the others.
	\end{example}
	
	\begin{example}[A saturated but non-split suborbifold]
		\label{ex:saturated-non-split-groupoid}
		Let $\mathcal{O}$ and $\mathcal{P}$ be as in
		Example~\ref{ex:non-split-groupoid}. Let $\mathcal{Q} =
		\operatorname{diag}(\mathcal{P}) \subset \mathcal{O} \times \mathcal{O}$
		and $\mathcal{R} = \mathcal{O} \times \mathcal{O}$ as in
		\cite[Example 14]{BorzellinoBrunsden2014}. In the groupoid language, let
		$G = \mathbb{Z}_4 \ltimes \mathbb{C}^2$ and consider the product groupoid
		\[
		G \times G = (\mathbb{Z}_4 \times \mathbb{Z}_4) \ltimes (\mathbb{C}^2
		\times \mathbb{C}^2).
		\]
		The diagonal of $\mathcal{P}$ corresponds to the subgroupoid
		\[
		Q = \operatorname{diag}(K) = \{(k,k) \in K \times K : k \in K\}
		\rightrightarrows \Delta N,
		\]
		where $K = \mathbb{Z}_4 \ltimes (\{0\} \times \mathbb{C})$ and
		$N = \{0\} \times \mathbb{C}$. At $x = (0,0)$, the stabilizer of $Q$ is
		\[
		Q_x \cong \mathbb{Z}_2,
		\]
		the stabilizer of $G \times G$ is
		\[
		(G \times G)_x \cong \mathbb{Z}_4 \times \mathbb{Z}_4,
		\]
		and the ineffective kernel is
		\[
		\Omega_x \cong \mathbb{Z}_2.
		\]
		The stabilizer sequence for $Q$ in $G \times G$ is
		\[
		1 \longrightarrow \mathbb{Z}_2 \longrightarrow \mathbb{Z}_2
		\longrightarrow \{e\} \longrightarrow 1,
		\]
		which is trivially split. Wait, let us compute more carefully. The
		subgroupoid $Q$ has stabilizer at $x = (0,0)$ given by
		\[
		Q_x = \{(\gamma,\gamma) : \gamma \in \mathbb{Z}_2\} \cong \mathbb{Z}_2,
		\]
		where $\mathbb{Z}_2 \subset \mathbb{Z}_4$ is the ineffective kernel of $K$.
		The ambient stabilizer is
		\[
		(G \times G)_x = \mathbb{Z}_4 \times \mathbb{Z}_4.
		\]
		The ineffective kernel of $Q$ at $x$ consists of elements of $Q_x$ acting
		trivially on $\Delta N$. Since $Q_x \cong \mathbb{Z}_2$ and its action on
		$\Delta N$ is effective (the diagonal of the action of $\mathbb{Z}_2$ on
		$\{0\} \times \mathbb{C}$), we have $\Omega_x = \{e\}$. Thus the stabilizer
		sequence is
		\[
		1 \longrightarrow \{e\} \longrightarrow \mathbb{Z}_2 \longrightarrow
		\mathbb{Z}_2 \longrightarrow 1,
		\]
		which is trivially split. So $Q$ is stabilizer split.
		
		However, $Q$ is \emph{not} orbit saturated in $G \times G$. Indeed, the
		orbit of $x = (0,0)$ under $G \times G$ includes points of the form
		$((0,y),(0,z))$ with $y,z \in \mathbb{C}$, $|y|=|z|$, while the orbit under
		$Q$ only includes points where $y = z$ (up to the $\mathbb{Z}_2$ action).
		Thus
		\[
		(G \times G) \cdot x \cap \Delta N \supsetneq Q \cdot x.
		\]
		By Theorem~\ref{thm:main}, $Q$ is not the image of a complete orbifold
		embedding. This is consistent with
		\cite[Example 14]{BorzellinoBrunsden2014}, where it is stated that $Q$ is
		not split in $\mathcal{R}$ and is not a full suborbifold, but is saturated.
	\end{example}
	
	\subsection{Morita invariance: an explicit example}
	
	\begin{example}[Morita equivalence for $\mathcal{Q} =
		\mathbb{R}/\mathbb{Z}_2$]\label{ex:morita-explicit}
		We illustrate the Morita invariance theorem
		(Theorem~\ref{thm:morita}) with an explicit Morita equivalence for the
		orbifold $\mathcal{Q} = \mathbb{R}/\mathbb{Z}_2$.
		
		\smallskip
		\noindent\textbf{Presentation 1.}
		The standard translation groupoid is
		\[
		G = \mathbb{Z}_2 \ltimes \mathbb{R} \rightrightarrows \mathbb{R},
		\]
		with $G_1 = \mathbb{Z}_2 \times \mathbb{R}$, $G_0 = \mathbb{R}$.
		
		\smallskip
		\noindent\textbf{Presentation 2.}
		Consider the groupoid
		\[
		G' = \mathbb{Z}_2 \ltimes (\mathbb{R} \sqcup \mathbb{R})
		\rightrightarrows \mathbb{R} \sqcup \mathbb{R},
		\]
		where $\mathbb{Z}_2$ acts diagonally on the disjoint union: it swaps the
		two copies and acts by negation on each copy. More precisely,
		$G'_0 = \mathbb{R} \sqcup \mathbb{R}$ with points denoted $(x,0)$ and
		$(x,1)$, and
		\[
		G'_1 = \mathbb{Z}_2 \times (\mathbb{R} \sqcup \mathbb{R}),
		\]
		with source and target
		\[
		s(\gamma, (x,i)) = (x,i),\qquad
		t(\gamma, (x,i)) = (\gamma \cdot x, i+1 \bmod 2).
		\]
		The orbit space $|G'|$ is homeomorphic to $[0,\infty) \cong |G|$. The
		groupoids $G$ and $G'$ are Morita equivalent via the Morita morphism
		\[
		\phi \colon G' \to G,\qquad
		\phi_0(x,i) = x,\qquad
		\phi_1(\gamma, (x,i)) = \gamma.
		\]
		
		\smallskip
		\noindent\textbf{Suborbifold and its Morita transport.}
		Let $K = G|_{\{0\}}$ be the subgroupoid of $G$ over the origin. This is the
		suborbifold corresponding to the singular point $\{0\} \subset \mathcal{Q}$.
		The Morita transport $K'$ of $K$ along $\phi$ is
		\[
		K' = G'|_{\{(0,0),(0,1)\}},
		\]
		the restriction groupoid over the two points $(0,0)$ and $(0,1)$. The
		stabilizer of $(0,0)$ in $K'$ is $\mathbb{Z}_2$, and similarly for
		$(0,1)$. The effective quotient $K'_{\mathrm{eff}}$ has orbit space a single
		point, which is homeomorphic to $|K_{\mathrm{eff}}| = \{0\}$. Thus the
		Morita transport preserves the suborbifold structure, as asserted by
		Theorem~\ref{thm:morita}.
		
		\smallskip
		\noindent\textbf{Non-Morita-invariant embedding.}
		Now consider the embedding $G \hookrightarrow G \times G$ given by
		$g \mapsto (g,g)$ (the diagonal). Under the Morita equivalence, this
		corresponds to an embedding $G' \hookrightarrow G' \times G'$, but the
		image is not a subgroupoid embedding in the naive sense because $G'$ has two
		connected components. This illustrates why the Cho--Hong--Shin embedding
		definition is not Morita invariant: it does not account for the possibility
		that a Morita equivalent presentation may have multiple components or a
		different local structure. In contrast, the saturated and split
		subgroupoid $K$ has a well-defined Morita transport $K'$, and both are
		orbit saturated and stabilizer split by Theorem~\ref{thm:morita}.
	\end{example}
	
	\subsection{Summary of examples}
	
	We summarize the properties of the examples in the following table, which
	parallels \cite[Table 1]{BorzellinoBrunsden2014}.
	
	\begin{center}
		\begin{tabular}{lccccc}
			\toprule
			\textbf{Example} & \textbf{Full} & \textbf{Saturated} & \textbf{Split} &
			\textbf{Image of embedding} \\
			\midrule
			\ref{ex:full-suborbifold-groupoid} & yes & yes & yes & yes \\
			\ref{ex:diagonal-groupoid} & no & yes & yes & yes \\
			\ref{ex:non-split-groupoid} & yes & yes & no & no \\
			\ref{ex:nested-groupoids} & yes & no & yes & no \\
			\ref{ex:saturated-non-split-groupoid} & no & yes & no & no \\
			\bottomrule
		\end{tabular}
	\end{center}
	
	The table confirms that all possible combinations of properties that are
	not implied by others can occur, as observed in
	\cite[Table 1]{BorzellinoBrunsden2014}. In particular:
	\begin{itemize}
		\item Full suborbifolds are always saturated
		(Example~\ref{ex:full-suborbifold-groupoid}).
		\item Saturated and split suborbifolds are precisely the images of
		complete orbifold embeddings (Examples~\ref{ex:full-suborbifold-groupoid}
		and~\ref{ex:diagonal-groupoid}).
		\item A suborbifold can be saturated but not split
		(Example~\ref{ex:non-split-groupoid}), in which case it is not the
		image of a complete orbifold embedding.
		\item A suborbifold can be split but not saturated
		(Example~\ref{ex:nested-groupoids}), in which case it is also not the
		image of a complete orbifold embedding.
		\item A suborbifold can fail to be both saturated and split
		(Example~\ref{ex:saturated-non-split-groupoid}).
	\end{itemize}
	These examples illustrate the necessity of both conditions in the main
	theorem.
	

	\section{Comparison with the Literature}\label{sec:comparison}
	
	In this section we compare our groupoid formulation of suborbifolds and
	orbifold embeddings with the main approaches in the literature. We focus on
	the chart-based definition of Thurston and Borzellino--Brunsden, the
	embedding-first definitions of Chen--Ruan, Adem--Leida--Ruan, and
	Cho--Hong--Shin, and the groupoid-theoretic approach of Moerdijk--Pronk,
	Mestre--Weilandt, Pohl, and Lerman. We also explain how our Morita
	invariance theorem resolves an open problem and clarifies the
	non-Morita-invariance observed by Cho--Hong--Shin.
	
	\subsection{Chart-based definitions: Thurston and Borzellino--Brunsden}
	
	Thurston's original definition of a suborbifold \cite{Thurston1979} requires
	the local model to be $\mathbb{R}^m \subset \mathbb{R}^n$ modulo finite
	groups, with the local action on $\mathbb{R}^m$ induced by that on
	$\mathbb{R}^n$. In the notation of
	\cite[Definition 4]{BorzellinoBrunsden2014}, this corresponds to the
	condition $\Lambda_x = \Gamma_x$ for all $x$ in the suborbifold. Such
	suborbifolds are called \emph{full} by Borzellino and Brunsden
	\cite[Definition 5]{BorzellinoBrunsden2014}. Thurston's definition is too
	restrictive: it excludes, for instance, the diagonal embedding of an
	orbifold into its product.
	
	Borzellino and Brunsden \cite{BorzellinoBrunsden2014} generalized
	Thurston's definition by allowing $\Lambda_x$ to be a proper subgroup of
	$\Gamma_x$ and by allowing the local submanifold $\widetilde{V}_x$ to be
	only $\Lambda_x$-invariant, not necessarily $\Gamma_x$-invariant. They also
	introduced the conditions of \emph{saturated} and \emph{split}
	\cite[Definitions 6 and 7]{BorzellinoBrunsden2014} and proved that a
	suborbifold is the image of a complete orbifold embedding if and only if it
	is both saturated and split \cite[Theorem 1]{BorzellinoBrunsden2014}. Our
	work is a direct groupoid translation and extension of their results.
	
	\subsection{Embedding-first definitions: Chen--Ruan, Adem--Leida--Ruan,
		and Cho--Hong--Shin}
	
	A different approach, inspired by the manifold case, is to define
	suborbifolds as images of orbifold embeddings. Chen and Ruan
	\cite{ChenRuan2004} and Adem, Leida, and Ruan \cite{AdemLeidaRuan2007}
	developed notions of orbifold morphisms and embeddings in the context of
	orbifold Gromov--Witten theory and stringy topology. Cho, Hong, and Shin
	\cite{ChoHongShin2013} gave a systematic treatment of orbifold embeddings
	and defined suborbifolds as their images.
	
	In the groupoid language, an orbifold embedding in the sense of
	\cite{ChoHongShin2013} corresponds to a subgroupoid embedding
	$H \hookrightarrow G$ with injective stabilizer maps, where $H$ is
	effective. This is precisely the notion of a complete orbifold embedding in
	our Definition~\ref{def:complete-embedding}. However, as we have seen in
	Theorem~\ref{thm:main}, such an embedding exists only if the corresponding
	subgroupoid is both orbit saturated and stabilizer split. The
	embedding-first definitions therefore describe only a proper subclass of
	suborbifolds: those that are saturated and split.
	
	More importantly, Cho--Hong--Shin observed that their definition of
	orbifold embedding is \emph{not} Morita invariant
	\cite[Section 5]{ChoHongShin2013}. Our Morita invariance theorem
	(Theorem~\ref{thm:morita}) explains this phenomenon: the missing condition
	is orbit saturation. Without it, the construction depends on the chosen
	groupoid presentation. When orbit saturation is imposed, the notion becomes
	Morita invariant, as we have proved.
	
	\subsection{Groupoid-theoretic approaches: Moerdijk--Pronk,
		Mestre--Weilandt, Pohl, and Lerman}
	
	The groupoid language for orbifolds was pioneered by Moerdijk and Pronk
	\cite{MoerdijkPronk1997}, who established the equivalence between
	effective orbifolds and proper \'etale Lie groupoids up to Morita
	equivalence. Their work provides the foundation for our translation.
	
	Mestre and Weilandt \cite{MestreWeilandt2020} studied the relationship
	between suborbifolds and groupoid embeddings. They introduced notions of
	strong, very strong, and regular suborbifolds and gave sufficient conditions
	for a groupoid embedding to correspond to a suborbifold. However, they did
	not prove Morita invariance of their notions; indeed, they listed it as an
	open problem. Our Theorem~\ref{thm:morita} solves this problem for the
	saturated and split suborbifolds of Borzellino--Brunsden.
	
	Pohl \cite{Pohl2015} developed a category of reduced orbifolds in local
	charts, addressing inconsistencies in earlier groupoid approaches. Lerman
	\cite{Lerman2010} studied orbifolds as stacks and compared various notions
	of morphisms. Our work is compatible with these perspectives: the
	saturated and split condition can be interpreted as a condition on the
	corresponding morphism of stacks, ensuring that it is representable and
	injective on stabilizers.
	
	\subsection{Our contribution and its relation to the literature}
	
	The main contributions of this paper can be summarized as follows.
	
	\begin{enumerate}
		\item We translate the chart-based notions of suborbifold, saturation, and
		splitting into the language of proper \'etale Lie groupoids
		(Sections~\ref{sec:suborbifold}--\ref{sec:split}).
		\item We prove the groupoid version of the main theorem of
		Borzellino--Brunsden (Theorem~\ref{thm:main}): a subgroupoid is the
		image of a complete orbifold embedding if and only if it is orbit
		saturated and stabilizer split.
		\item We prove Morita invariance of these notions
		(Theorem~\ref{thm:morita}), solving an open problem of
		Mestre--Weilandt \cite{MestreWeilandt2020}.
		\item We explain the non-Morita-invariance of the Cho--Hong--Shin
		embedding definition \cite{ChoHongShin2013} as a failure of orbit
		saturation (Example~\ref{ex:non-morita}).
	\end{enumerate}
	
	The following table summarizes the comparison.
	
	\begin{center}
		\begin{tabular}{p{3.2cm}p{4.2cm}p{4.5cm}}
			\toprule
			\textbf{Approach} & \textbf{Suborbifold definition} &
			\textbf{Relation to embeddings} \\
			\midrule
			Thurston \cite{Thurston1979} &
			Local model $\mathbb{R}^m \subset \mathbb{R}^n$, local groups equal &
			Only full suborbifolds; too restrictive \\[2mm]
			Borzellino--Brunsden \cite{BorzellinoBrunsden2014} &
			Chart-based, $\Lambda_x \subset \Gamma_x$, saturated + split &
			Image of complete embedding iff saturated and split \\[2mm]
			Chen--Ruan \cite{ChenRuan2004}, Adem--Leida--Ruan
			\cite{AdemLeidaRuan2007} &
			Embedding-first, suborbifold = image of embedding &
			Depends on embedding notion; not Morita invariant in general \\[2mm]
			Cho--Hong--Shin \cite{ChoHongShin2013} &
			Embedding-first, explicit orbifold embeddings &
			Not Morita invariant; corresponds to split but not necessarily saturated
			\\[2mm]
			Mestre--Weilandt \cite{MestreWeilandt2020} &
			Groupoid embeddings, strong/very strong/regular suborbifolds &
			Sufficient conditions; Morita invariance left open \\[2mm]
			\textbf{This paper} &
			Groupoid suborbifold, orbit saturated + stabilizer split &
			Image of complete embedding iff saturated and split; Morita invariant \\
			\bottomrule
		\end{tabular}
	\end{center}
	
	\subsection{Summary}
	
	Our groupoid formulation unifies the chart-based approach of
	Borzellino--Brunsden with the groupoid-theoretic perspective of
	Moerdijk--Pronk and Mestre--Weilandt. It clarifies the relationship between
	suborbifolds and orbifold embeddings, provides a Morita invariant notion,
	and explains the limitations of earlier embedding-first definitions. In the
	next section we summarize the paper and discuss directions for future
	research.
	
	
	\section{Conclusion and Future Directions}\label{sec:conclusion}
	
	\subsection{Summary of results}
	
	In this paper we have translated the notions of suborbifold, saturation,
	splitting, and complete orbifold embedding from the chart-based language of
	Borzellino and Brunsden \cite{BorzellinoBrunsden2014} into the language of
	proper \'etale Lie groupoids. Our main results are:
	
	\begin{enumerate}
		\item A suborbifold of an orbifold groupoid $G$ corresponds to a proper
		\'etale subgroupoid $K \subset G|_N \subset G$, where $N \subset G_0$ is
		a submanifold. The effective quotient $K_{\mathrm{eff}} = K/\Omega$
		represents the suborbifold structure
		(Definition~\ref{def:suborbifold-groupoid} and
		Proposition~\ref{prop:suborbifold-equiv}).
		\item Full suborbifolds in the sense of Thurston correspond to the case
		$K = G|_N$ (Proposition~\ref{prop:full-suborbifold}).
		\item Orbit saturation of $K$ is equivalent to the saturation condition
		of Borzellino--Brunsden and is equivalent to the natural map
		$|K_{\mathrm{eff}}| \to |G|$ being a topological embedding
		(Theorem~\ref{thm:saturated-embedding}).
		\item Stabilizer splitting of $K$ is equivalent to the split condition of
		Borzellino--Brunsden and is equivalent to the existence of an effective
		subgroupoid $H \subset K$ with $K = \Omega \rtimes H$ and
		$H \hookrightarrow G$ a subgroupoid embedding with injective stabilizer
		maps (Theorem~\ref{thm:split-effective}).
		\item A subgroupoid is the image of a complete orbifold embedding if and
		only if it is both orbit saturated and stabilizer split
		(Theorem~\ref{thm:main}).
		\item The notions of orbit saturation and stabilizer splitting are Morita
		invariant (Theorem~\ref{thm:morita}). Consequently, the property of
		being the image of a complete orbifold embedding is Morita invariant
		(Corollary~\ref{cor:morita-suborbifold}).
	\end{enumerate}
	
	These results provide a complete groupoid-theoretic account of the
	chart-based theory of Borzellino and Brunsden, and they resolve the open
	problem of Morita invariance raised by Mestre and Weilandt
	\cite{MestreWeilandt2020}. They also explain why the embedding definition
	of Cho--Hong--Shin \cite{ChoHongShin2013} fails to be Morita invariant: the
	missing condition is orbit saturation.
	
	\subsection{Future directions}
	
	Several natural questions and research directions emerge from this work.
	
	\subsubsection*{Non-effective orbifolds}
	
	Throughout this paper we have worked with effective orbifolds, in accordance
	with the classical Satake--Thurston convention adopted by Borzellino and
	Brunsden. However, many modern applications—particularly in stringy
	topology and orbifold Gromov--Witten theory—require non-effective orbifolds
	and orbifold groupoids with non-trivial ineffective kernels. It would be
	interesting to extend our definitions and theorems to the non-effective
	setting. The main difficulty is that the effective quotient
	$K_{\mathrm{eff}} = K/\Omega$ may no longer be a subgroupoid of $G$ in a
	natural way, and the Morita transport of the ineffective kernel requires
	more care. One possible framework is that of \emph{gerbes} or
	\emph{2-groupoids}, where the ineffective kernel is encoded as a band. We
	expect that the saturated and split conditions will need to be reformulated
	in terms of the band and its splittings.
	
	\subsubsection*{Orbifold tubular neighborhoods}
	
	Du, Chen, and Wang \cite{DuChenWang2019} proved an orbifold tubular
	neighborhood theorem for compact suborbifolds using chart-based methods. It
	would be desirable to give a groupoid-theoretic proof of this theorem and
	to extend it to non-compact or non-effective suborbifolds. The groupoid
	version of the tubular neighborhood theorem would state that a saturated
	and split subgroupoid $K \subset G$ admits a $K$-invariant tubular
	neighborhood in $G$ that is unique up to ambient isotopy. This would
	require developing a theory of normal bundles for subgroupoids and proving
	an equivariant tubular neighborhood theorem in the proper \'etale setting.
	
	\subsubsection*{Transversality and intersections}
	
	For manifolds, transversality is a fundamental tool in differential
	topology. For orbifolds, the situation is more subtle because of the
	presence of singular strata. It would be interesting to develop a theory of
	transversal intersections of suborbifolds and to prove a transversality
	theorem in the groupoid setting. Given two saturated and split
	subgroupoids $K, L \subset G$, one can ask when their intersection
	$K \cap L$ is again a saturated and split subgroupoid. The answer will
	depend on the behavior of the stabilizer groups and the local models. This
	could lead to a notion of \emph{transversal suborbifold} and a
	corresponding transversality theorem.
	
	\subsubsection*{Stacks and higher structures}
	
	Orbifolds can be viewed as differentiable stacks, and our results can be
	translated into the language of stacks. A saturated and split subgroupoid
	corresponds to a representable morphism of stacks that is injective on
	stabilizers and whose image is a closed substack. It would be interesting
	to compare our notion with the notion of \emph{closed substack} in
	algebraic geometry and to see whether the saturated and split conditions
	have natural interpretations in terms of the stack's inertia stack. This
	could also lead to a notion of \emph{suborbifold} in the context of derived
	algebraic geometry or higher orbifolds.
	
	\subsubsection*{Applications to Riemannian geometry and geodesics}
	
	Borzellino and Brunsden \cite{BorzellinoBrunsden2014} applied their main
	theorem to length-minimizing geodesics in Riemannian orbifolds, showing
	that a length-minimizing curve can always be realized as the image of an
	orbifold embedding. It would be interesting to extend this application to
	the groupoid setting and to study the space of geodesics in an orbifold
	from the groupoid perspective. In particular, one could investigate
	whether the saturated and split conditions have a natural interpretation in
	terms of the exponential map and the cut locus.
	
	\subsubsection*{Computational and algorithmic aspects}
	
	Finally, it would be useful to develop computational tools for deciding
	whether a given subgroupoid is orbit saturated and stabilizer split. For
	finite groupoids or groupoids with finite stabilizers, this is a
	finite combinatorial problem. For more general proper \'etale groupoids, one
	might use the local uniformizers to reduce the problem to finite group
	computations. This could lead to algorithms for classifying suborbifolds
	and orbifold embeddings in low dimensions.
	
	\subsection{Concluding remarks}
	
	The differential topology of orbifolds is a rich and subtle subject, and
	the notions of suborbifold and orbifold embedding are among its most
	delicate aspects. By translating the chart-based theory of Borzellino and
	Brunsden into the groupoid language, we have provided a unified framework
	that makes the Morita invariance of these notions transparent and that
	connects the theory to the broader context of Lie groupoids and
	differentiable stacks. We hope that this work will stimulate further
	research on the differential topology of orbifolds and its applications.
	
	\section{Tool Use}
	
During the preparation of this manuscript, the author used ChatGPT (version 5.5)
to check the textual aspects of this manuscript, and made corrections to any errors and inappropriate
expressions. The author consulted ChatGPT for suggestions on improving the manuscript and made
revisions. The author has reviewed and edited the output and takes full responsibility for the content
of this publication.

\end{document}